\documentclass[a4paper,UKenglish,cleveref, autoref, thm-restate]{lipics-v2021}
\bibliographystyle{plainurl}

\title{Dichotomies for Maximum Matching Cut: $H$-Freeness, Bounded Diameter, Bounded Radius\footnote{An extended abstract of this paper appeared in the proceedings of MFCS 2023~\cite{LPR23b}.}}
\titlerunning{Dichotomies for Maximum Matching Cut} 

\author{Felicia Lucke}{Department of Informatics, University of Fribourg, Fribourg, Switzerland}{felicia.lucke@unifr.ch}{https://orcid.org/0000-0002-9860-2928}{}
\author{Daniël Paulusma}{Department of Computer Science, Durham University, Durham, UK}{daniel.paulusma@durham.ac.uk}{https://orcid.org/0000-0001-5945-9287}{}
\author{Bernard Ries$^*$}{Department of Informatics, University of Fribourg, Fribourg, Switzerland}{bernard.ries@unifr.ch}{https://orcid.org/0000-0003-4395-5547}{}

\authorrunning{F. Lucke and D. Paulusma and B. Ries} 
\Copyright{Felicia Lucke and Dani\"el Paulusma and Bernard Ries}
\ccsdesc[100]{Mathematics of computing~Graph algorithms}
\keywords{matching cut; perfect matching; $H$-free graph; diameter; radius; dichotomy} 

\nolinenumbers 
\hideLIPIcs

\usepackage{boxedminipage,tikz}
\usepackage{subcaption}
\usetikzlibrary{decorations.pathreplacing, calligraphy}
\usetikzlibrary{decorations.pathmorphing}
\usetikzlibrary{arrows,automata}
\usetikzlibrary{arrows.meta}
\usetikzlibrary{shapes.geometric}
\definecolor{nicered}{RGB}{204,0,0}
\definecolor{lightblue}{RGB}{153,204,255}
\definecolor{nicegreen}{RGB}{0,153,0}
 \tikzstyle{vertex}=[thin,circle,inner sep=0.cm, minimum size=1.7mm, fill=black, draw=black]
 \tikzstyle{svertex}=[thin,circle,inner sep=0.cm, minimum size=1.3mm, fill=black, draw=black]
 \tikzstyle{bvertex}=[thin,circle,inner sep=0.cm, minimum size=1.7mm, fill=lightblue, draw=lightblue]
 \tikzstyle{rvertex}=[thin,circle,inner sep=0.cm, minimum size=1.7mm, fill=nicered,draw=nicered]
 \tikzstyle{evertex}=[thin,circle,inner sep=0.cm, minimum size=1.7mm, fill=none,draw=black]
 \tikzstyle{hedge}=[thick, draw = gray]
 \tikzstyle{edge}=[thick, draw = gray]
 \tikzstyle{tedge}=[ultra thick, draw = black]
 \tikzstyle{tredge}=[ultra thick, draw = nicered]
 \tikzstyle{pedge}=[ultra thick, draw=lightblue]
 \tikzstyle{rededge}=[thick, draw = nicered]
 \tikzstyle{bluedge}=[thick, draw = lightblue]
 \tikzstyle{grnedge}=[thick, draw = nicegreen]
 \tikzstyle{edge}=[thick, draw = gray]
 \tikzstyle{br} = [decorate, ultra thick, decoration = {calligraphic brace}]
 \tikzstyle{wiggly} = [decorate, decoration = snake, thick, draw = gray,]

\newcommand{\NP}{{\sf NP}}
\newcommand{\PP}{{\sf P}}

\newcommand{\XP}{{\sf XP}}

\newcommand{\ssi}{\subseteq_i}
\newcommand{\si}{\supseteq_i}

\newcommand{\radius}{{\sf radius}}
\newcommand{\diam}{{\sf diameter}}

\newcommand{\mmc}{{\sc Maximum Matching Cut}}

\newcommand{\mdpm}{{\sc Maximum Disconnected Perfect Matching}}
\newcommand{\set}[1]{\ensuremath{ \left\lbrace #1 \right\rbrace }}

 \newtheorem{claim1}{Claim}[theorem]
 \newtheorem{open}{Open Problem} 
 
\begin{document}

\maketitle

\begin{abstract}
The {\sc (Perfect) Matching Cut} problem is to decide if a graph~$G$ has a (perfect) matching cut, i.e., a (perfect) matching that is also an edge cut of $G$. Both {\sc Matching Cut} and {\sc Perfect Matching Cut} are known to be \NP-complete. A perfect matching cut is also a matching cut with maximum number of edges. To increase our understanding of the relationship between the two problems, we perform a complexity study
for the {\sc Maximum Matching Cut} problem, which is
to determine a largest matching cut in a graph. Our results yield full dichotomies of {\sc Maximum Matching Cut} for graphs of bounded diameter, bounded radius and $H$-free graphs. A disconnected perfect matching of a graph $G$ is a perfect matching that contains a matching cut of~$G$. 
We also show how our new techniques can be used for finding a disconnected perfect matching 
with a largest matching cut 
for special graph classes. In this way we can prove that the 
decision problem {\sc Disconnected Perfect Matching} is polynomial-time solvable for $(P_6+sP_2)$-free graphs for every $s\geq 0$, 
extending a known result for $P_5$-free graphs (Bouquet and Picouleau, 2020).
\end{abstract}

\section{Introduction}\label{s-intro}

A {\it matching} $M$ (i.e., a set of pairwise disjoint edges) of a connected graph $G=(V,E)$ is a {\it matching cut} if $V$ can be partitioned into a set of blue vertices $B$ and a set of red vertices $R$, such that $M$ consists of all the edges with one end-vertex in $B$ and the other one in $R$. Graphs with matching cuts were introduced in 1970 by Graham~\cite{Gr70} (as {\it decomposable} graphs) to solve a problem on cube numbering. Other relevant applications include ILFI networks~\cite{FP82}, WDM networks~\cite{ACGH12}, graph drawing~\cite{PP01} and surjective graph homomorphisms~\cite{GPS12}.

The decision problem is called {\sc Matching Cut}: does a given connected graph have a matching cut? In 1984, Chv\'atal~\cite{Ch84} proved that it is \NP-complete even for graphs of maximum degree at most~$4$. Afterwards, parameterized and exact algorithms were given~\cite{AKK22,CHLLP21,GKKL22,GS21,KKL20,KL16}. A variant called {\sc Disconnected Perfect Matching}
``does a connected graph have a perfect matching that contains a matching cut?'' has also been studied~\cite{BP,FLPR23,LPR23a} (see Section~\ref{s-results} for more on this problem). Moreover, {\sc Matching Cut} was generalized, for every $d\geq 1$, to $d$-{\sc Cut} ``does a connected graph have an edge cut where each vertex has at most $d$ neighbours across the cut?''~\cite{AS21,GS21}. In particular, many results have appeared where the input for {\sc Matching Cut} was restricted to some special graph class, and this is what we do in our paper as well. 
We first discuss related work, restricting ourselves mainly to those classes relevant to our paper (see, for example,~\cite{CHLLP21} for a more comprehensive overview):\\[-10pt]
\begin{itemize}
\item graphs of bounded diameter;
\item graphs of bounded radius;
\item hereditary graph classes; in particular $H$-free graphs.
\end{itemize}

\noindent
The {\it distance} between two vertices $u$ and $v$ in a connected graph~$G$ is the {\it length} (number of edges) of a shortest path between $u$ and $v$ in~$G$. The {\it eccentricity} of a vertex $u$ is the maximum distance between $u$ and any other vertex of $G$. The {\it diameter}, denoted by $\diam(G)$, and {\it radius}, denoted by $\radius(G)$, are the maximum and minimum eccentricity, respectively, over all vertices of~$G$; note that $\radius(G)\leq \diam(G)\leq 2\cdot\radius(G)$ for every graph~$G$.

The {\sc Matching Cut} problem is polynomial-time solvable for graphs of diameter at most~$2$~\cite{BJ08,LL19}. This result was extended to graphs of radius at most~$2$~\cite{LPR22}. In contrast, the problem is \NP-complete for graphs of diameter at most~$3$~\cite{LL19}, yielding two dichotomies:
       
\begin{theorem}[\cite{LL19,LPR22}]\label{t-diameter}
For an integer $d\geq 1$, {\sc Matching Cut} for graphs of diameter~$d$ and for graphs of radius~$d$ is polynomial-time solvable if $d\leq 2$ and \NP-complete if $d\geq 3$.
\end{theorem}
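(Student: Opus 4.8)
The statement consists of a tractable part ($d\le 2$) and a hardness part ($d\ge 3$), each to be established for both parameters. I would begin with two reductions between the two sides. Since $\radius(G)\le\diam(G)$ for every graph, every graph of diameter at most $2$ has radius at most $2$, so a single polynomial-time algorithm for graphs of radius at most $2$ settles the tractable part for both parameters. Membership in \NP{} is immediate (a matching cut is a polynomial-size, polynomial-time-checkable certificate), so on the hardness part it remains to prove \NP-hardness for graphs of diameter exactly $d$, and separately of radius exactly $d$, for each fixed $d\ge 3$.

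For the tractable side, if $\radius(G)\le 1$ then $G$ has a universal vertex $u$, and since $u$ could have at most one neighbour across any matching cut, one checks directly that $G$ has a matching cut if and only if it has a vertex of degree $1$ (whose unique neighbour is then $u$); this is trivially decidable. So suppose $\radius(G)=2$ and fix a vertex $w$ with $\dist(w,v)\le 2$ for all $v$, writing $V=\{w\}\cup N(w)\cup N_2(w)$ with $N_2(w)$ the vertices at distance exactly $2$ from $w$. In any matching cut $(B,R)$ we may assume $w\in B$, and $w$ has at most one neighbour in $R$; I would branch over the at most $|N(w)|+1$ possibilities for this neighbour (including the option ``none''). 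Each branch places $w$ and all but at most one vertex of $N(w)$ into $B$, after which every vertex of $N_2(w)$ has a neighbour of already-determined colour. I would turn this into a propagation rule, apply it exhaustively, and argue that in the residual instance every still-undetermined vertex has only boundedly many undetermined neighbours, so that the remaining requirement ``every vertex has at most one neighbour of the opposite colour'' reduces to $2$-satisfiability (finishing with a check that both colour classes are nonempty). The delicate point here is precisely this forcing lemma: one must use the radius-$2$ hypothesis to ensure that the residual constraint structure is simple enough for $2$-SAT rather than an unbounded search.

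For \NP-hardness with $d=3$ I would reduce from an \NP-hard variant of satisfiability (for instance {\sc Not-All-Equal 3-Sat}), with variable- and clause-gadgets that encode assignments through the local rule ``at most one neighbour across'', all attached to a constant number of auxiliary \emph{hub} vertices. The hubs must be adjacent to enough of the gadget structure to force the diameter down to $3$, while not being so dominating that they preclude every matching cut --- note that two \emph{adjacent} universal vertices would force one side of any matching cut to be empty --- so the construction is arranged so that exactly one colouring pattern of the hubs is consistent with a matching cut, and that with this pattern fixed the remaining colour choices correspond exactly to satisfying assignments. I expect the principal obstacle of the whole theorem to be here: ruling out \emph{spurious} matching cuts that exploit the small-diameter padding, i.e. showing that no matching cut can split the hubs or cut through a gadget in an unintended way, all while the metric constraint is met simultaneously.

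Finally, to reach diameter exactly $d>3$ I would attach a \emph{rigid} pendant gadget of suitable length to a suitably chosen vertex of the diameter-$3$ construction. A chain of triangles sharing consecutive vertices works: every triangle is monochromatic in any matching cut, hence so is the whole chain, so it neither creates nor destroys a matching cut; by contrast a bare pendant path must be avoided, since a vertex of degree $1$ always yields a matching cut. Choosing the attachment vertex and the chain length according to that vertex's eccentricity makes the new diameter exactly $d$. For radius exactly $d\ge 3$ the same rigid chain, now of length $d$, is attached instead at one of the hub vertices: this makes that hub's eccentricity equal to $d$ and every other vertex's eccentricity at least $d$, so the radius becomes exactly $d$, while matching-cut status is again unchanged.
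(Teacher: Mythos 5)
The paper does not prove Theorem~\ref{t-diameter} itself; it is imported from \cite{LL19} (diameter) and \cite{LPR22} (radius). Your overall architecture --- one algorithm for radius at most~$2$ that also covers diameter at most~$2$, a diameter-$3$ hardness gadget, and rigid padding for larger $d$ --- is the same as in those papers, and your radius-$1$ argument and diameter-padding are sound. But two steps on the hardness side do not survive scrutiny. First, the entire \NP-hardness construction for diameter~$3$ is absent: you list desiderata for hubs and gadgets but give no gadget and no verification, and as you yourself observe this is where the real work of the theorem lies. Second, the radius padding is wrong as stated. If you attach a rigid chain whose far end is at distance $d$ from a hub $h$ of a diameter-$3$ graph, then indeed the eccentricity of $h$ becomes $d$, but an interior chain vertex $c_i$ at distance $i$ from $h$ has eccentricity $\max(d-i,\; i+\mathrm{ecc}(h))$ with $\mathrm{ecc}(h)\leq 3$; minimising over $i$ gives roughly $(d+3)/2$, which is strictly below $d$ already for $d=5$. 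The centre of the padded graph migrates into the chain and the radius is about $(d+3)/2$, not $d$. One needs a chain of length roughly $2d-\mathrm{ecc}(h)$ (and must then recheck every eccentricity), or a different argument.

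On the tractable side, the forcing lemma you flag as ``the delicate point'' is not only unproven but aims at the wrong invariant. Knowing that every undetermined vertex has boundedly many undetermined neighbours does not give a $2$-SAT instance: the constraint ``$v$ has at most one neighbour coloured differently from $v$'' is a conjunction of binary clauses only when $v$'s own colour is already fixed; if $v$ is undetermined with undetermined neighbours $x_i,x_j$, the constraint reads $(x_i\leftrightarrow v)\vee(x_j\leftrightarrow v)$, which is ternary. What the known proofs actually establish --- and what this paper mirrors for \mmc{} in Section~\ref{s-poly}, e.g.\ Claim~3.9 in the diameter-$2$ algorithm --- is that after propagation every connected component of undetermined vertices must be monochromatic; the $2$-SAT instance then has one variable per component, and the binary clauses come from already-coloured vertices adjacent to two such components. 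Without that monochromaticity lemma the reduction to $2$-SAT does not go through, so this gap is substantive rather than cosmetic.
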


\noindent
A class of graphs is {\it hereditary} if it is closed under vertex deletion. Hereditary graph classes include many well-known classes, such as those that are $H$-free for some graph~$H$.
A graph~$ G$ is {\it $H$-free} if $G$ does not contain $H$ as an {\it induced} subgraph, that is, $G$ cannot be modified into $H$ by a sequence of vertex deletions. For a set of graphs ${\cal H}$,
a graph $G$ is {\it ${\cal H}$-free} if $G$ is $H$-free for every $H\in {\cal H}$. If ${\cal H}=\{H_1,\ldots,H_p\}$ for some $p\geq 1$, we also say that $G$ is {\it $(H_1,\ldots,H_p)$-free}. Note that a class of graphs~${\cal G}$ is hereditary if and only if there is a set of graphs ${\cal H}$, such that every graph in ${\cal G}$ is ${\cal H}$-free. Hence, for a {\it systematic} complexity study, it is natural to first focus on the case where ${\cal H}$ has size~$1$; see, e.g.,~\cite{Ch12,CS05,DJP19,GJPS17,HMLW11,RS04}.

For an integer~$r\geq 1$, let $P_r$ denote the path on $r$ vertices, $K_{1,r}$ the star on $r+1$ vertices, and $K_{1,r}+e$ the graph obtained from $K_{1,r}$ by adding one edge (between two leaves). The graph $K_{1,3}$ is also known as the {\it claw}.
For $s\geq 3$, let $C_s$ denote the cycle on $s$ vertices. Let $H^*_1$ be the graph that looks like the letter ``$H$'', and for $i\geq 2$, let $H_i^*$ be the graph obtained from $H_1^*$ by subdividing the middle edge of $H_1^*$ exactly $i-1$ times; see also Figure~\ref{fig-Hstar}. We denote the {\it disjoint union} of two graphs $G_1$ and $G_2$ by $G_1+G_2=(V(G_1)\cup V(G_2),E(G_1)\cup E(G_2))$. We denote by $sG$ the disjoint union of $s$ copies of $G$, for $s\geq 1$.

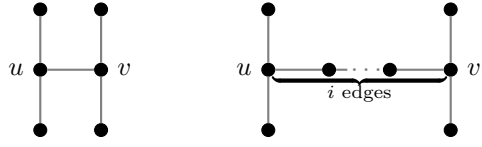
\begin{figure}[t]
\centering
\begin{tikzpicture}
\begin{scope}[scale=0.8]
	\node[vertex] (v1) at (0,2){};
	\node[vertex, label=left:$u$] (v2) at (0,1){};
	\node[vertex] (v3) at (0,0){};
	\node[vertex] (v4) at (1,2){};
	\node[vertex, label=right:$v$] (v5) at (1,1){};
	\node[vertex] (v6) at (1,0){};
	
	\draw[edge](v1)--(v2);
	\draw[edge](v2)--(v5);
	\draw[edge](v2)--(v3);
	\draw[edge](v4)--(v5);
	\draw[edge](v5)--(v6);
	
\end{scope}

\begin{scope}[shift = {(3,0)},scale=0.8]
	\node[vertex] (v1) at (0,2){};
	\node[vertex, label=left:$u$] (v2) at (0,1){};
	\node[vertex] (v3) at (0,0){};
	\node[vertex] (v4) at (3,2){};
	\node[vertex, label=right:$v$] (v5) at (3,1){};
	\node[vertex] (v6) at (3,0){};
	
	\node[vertex] (u1) at (1,1){};
	\node[vertex] (u2) at (2,1){};
	
	\draw[edge](v1)--(v2);
	\draw[edge](v2)--(v3);
	\draw[edge](v4)--(v5);
	\draw[edge](v5)--(v6);
	\draw[edge](v2)--(u1);
	\draw[edge](u2)--(v5);
	\draw[edge](u1)--(1.3,1);
	\node[color = gray](dots) at (1.65,1){$\dots$};
	
	\draw[br](2.925,0.875)--(0.075,0.875);
	\scriptsize
	\node[](i) at (1.5, 0.6){$i$ edges};
	
\end{scope}
\end{tikzpicture}
\caption{The graphs $H_1^*$ (left) and $H_i^*$ (right).}\label{fig-Hstar} 
\end{figure}

Polynomial-time algorithms for {\sc Matching Cut} exist for {\it subcubic} graphs (graphs of maximum degree at most~$3$)~\cite{Ch84}, $K_{1,3}$-free graphs~\cite{Bo09}, $P_6$-free graphs~\cite{LPR22}, $(K_{1,4},K_{1,4}+e)$-free graphs~\cite{KL16} and {\it quadrangulated} graphs, i.e., $(C_5,C_6,\ldots)$-free graphs~\cite{Mo89}; the latter class contains the class of {\it chordal} graphs, i.e., $(C_4,C_5,C_6,\ldots)$-free graphs. Moreover, if {\sc Matching Cut} is polynomial-time solvable for $H$-free graphs, then it is so for $(H+P_3)$-free graphs~\cite{LPR22}.
The problem is \NP-complete even for 
graphs of maximum degree at most~$4$~\cite{Ch84};
$K_{1,4}$-free graphs~\cite{Ch84} (see~\cite{Bo09,KL16}); planar graphs of girth~$5$~\cite{Bo09}; $K_{1,5}$-free bipartite graphs~\cite{LR03}; graphs of girth at least~$g$, for every $g\geq 3$~\cite{FLPR23}; 
 $(3P_5,P_{15})$-free graphs~\cite{LPR23a} (improving a result of~\cite{Fe23}); bipartite graphs where the vertices in one bipartition class all have degree exactly~$2$~\cite{Mo89} and thus for $H_i^*$-free graphs for every odd $i\geq 1$; and for $H_i^*$-free graphs for every even $i\geq 2$~\cite{FLPR23}. 
Recently,  Le and Le~\cite{LL23} proved that {\sc Matching Cut} is \NP-complete even for $(3P_6,2P_7,P_{14})$-free graphs. 
In fact, their hardness gadget also works for {\sc Perfect Matching Cut} (defined below) and {\sc Disconnected Perfect Matching} and can be readily checked to have diameter~$4$ and radius~$3$.
 
The above results imply the following partial complexity classification, which leaves open only a number of cases where $H$ 
is a forest, each connected component of which is either a path or a {\it subdivided claw} (tree with one vertex of degree~$3$ and all other vertices of degree at most~$2$).
 For two graphs $H$ and $H'$, we write $H\ssi H'$ if $H$ is an induced subgraph of $H'$.

\begin{theorem}[\cite{Bo09,Ch84,FLPR23,LL23,LPR23a,LPR22,Mo89}]\label{t-main1}
For a graph~$H$, {\sc Matching Cut} on $H$-free graphs is 
\begin{itemize}
\item polynomial-time solvable if $H\ssi sP_3+K_{1,3}$ or $sP_3+P_6$ for some $s\geq 0$, and
\item \NP-complete if $H\si K_{1,4}$, $P_{14}$, $2P_7$, $3P_5$, $C_r$ for some $r\geq 3$ or $H_j^*$ for some $j\geq 1$.
\end{itemize}
\end{theorem}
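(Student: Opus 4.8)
The plan is to derive the two bullet points from the results surveyed above; since the statement is a classification assembled from the literature, the proof amounts to bookkeeping, and the one thing to keep straight is how the relations $\ssi$ and $\si$ interact with classes of $H$-free graphs. The observation I would record first is that if $H\ssi H'$, then every $H$-free graph is also $H'$-free (a graph with an induced $H'$ would contain an induced $H$, by transitivity of the induced-subgraph relation), so the class of $H$-free graphs is contained in the class of $H'$-free graphs. Consequently, polynomial-time solvability of {\sc Matching Cut} on $H'$-free graphs transfers to all $H$-free graphs with $H\ssi H'$, whereas \NP-completeness of {\sc Matching Cut} on $H'$-free graphs transfers to all $H$-free graphs with $H\si H'$ (the hardness reduction for the smaller class still outputs instances of the larger class, and membership in \NP{} is immediate since a matching cut is a polynomial certificate).

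For the polynomial-time part I would take as base cases that {\sc Matching Cut} is polynomial-time solvable on $K_{1,3}$-free graphs~\cite{Bo09} and on $P_6$-free graphs~\cite{LPR22}, and then iterate $s$ times the closure property that polynomial-time solvability on $H$-free graphs implies polynomial-time solvability on $(H+P_3)$-free graphs~\cite{LPR22}. This gives polynomial-time algorithms for $(sP_3+K_{1,3})$-free graphs and for $(sP_3+P_6)$-free graphs for every $s\geq 0$, and therefore, by the observation above, for all $H$ with $H\ssi sP_3+K_{1,3}$ or $H\ssi sP_3+P_6$.

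For the \NP-complete part it suffices, again by the observation, to establish \NP-completeness of {\sc Matching Cut} on $H_0$-free graphs for each $H_0\in\{K_{1,4},P_{14},2P_7,3P_5\}\cup\{C_r:r\geq 3\}\cup\{H_j^*:j\geq 1\}$. Hardness on $K_{1,4}$-free graphs is due to Chv\'atal~\cite{Ch84} (see also~\cite{Bo09,KL16}); hardness on $P_{14}$-free and on $2P_7$-free graphs follows from~\cite{LL23}, which proves it already for the subclass of $(3P_6,2P_7,P_{14})$-free graphs; hardness on $3P_5$-free graphs follows from~\cite{LPR23a}, which proves it for the subclass of $(3P_5,P_{15})$-free graphs; hardness on $C_r$-free graphs follows from the hardness on graphs of girth at least $r+1$~\cite{FLPR23}, since such graphs have no cycle of length at most $r$ and so in particular no induced $C_r$; and hardness on $H_j^*$-free graphs for even $j$ is established in~\cite{FLPR23}.

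The only step that requires a genuine (though very short) argument is \NP-completeness on $H_j^*$-free graphs for \emph{odd} $j$, which I would deduce from Moshi's hardness result~\cite{Mo89} for bipartite graphs $G=(A\cup B,E)$ in which every vertex of $B$ has degree exactly $2$. In any induced copy of $H_j^*$ inside such a $G$, the two branch vertices $u$ and $v$ have degree $3$, so they lie in $A$; but in $H_j^*$ the vertices $u$ and $v$ are joined by a path with $j$ edges, which is a walk of length $j$ in $G$, and for odd $j$ this would place $u$ and $v$ in different bipartition classes, a contradiction. Hence every such $G$ is $H_j^*$-free for every odd $j$, so {\sc Matching Cut} is \NP-complete on $H_j^*$-free graphs for odd $j$ as well. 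I do not expect any real obstacle here; the points to be careful about are the two transfer directions between $H$-free classes, the passage from the girth-based hardness to $C_r$-freeness, and this last bipartite parity argument.
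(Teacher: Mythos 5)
Your proposal is correct and follows essentially the same route as the paper, which presents Theorem~\ref{t-main1} as a direct consequence of the surveyed results: the same base cases ($K_{1,3}$-free~\cite{Bo09}, $P_6$-free~\cite{LPR22}) combined with the $+P_3$ closure property of~\cite{LPR22} for the tractable side, and the same list of hardness sources ($K_{1,4}$ from~\cite{Ch84}, $P_{14}$ and $2P_7$ from~\cite{LL23}, $3P_5$ from~\cite{LPR23a}, $C_r$ via girth from~\cite{FLPR23}, $H_j^*$ for even $j$ from~\cite{FLPR23} and for odd $j$ via the bipartite degree-$2$ construction of~\cite{Mo89}) for the intractable side. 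Your parity argument for why Moshi's graphs are $H_j^*$-free for odd $j$ is exactly the (implicit) justification the paper relies on, and your transfer observations between $H$-free classes are the standard ones the paper uses without comment.
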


\begin{figure}[t]
\centering
\begin{tikzpicture}
\tikzstyle{cutedge}=[black, ultra thick]
\tikzstyle{matchedge}=[black, ultra thick, dotted]

\begin{scope}[shift={(10,0)}, scale=0.75]
\node[rvertex](p1) at (0,0){};
\node[bvertex](p2) at (1,0){};
\node[bvertex](p3) at (2,0){};
\node[rvertex](p4) at (3,0){};
\node[rvertex](p5) at (4,0){};
\node[bvertex](p6) at (5,0){};

\draw[cutedge](p1)--(p2);
\draw[hedge](p2)--(p3);
\draw[cutedge](p3)--(p4);
\draw[hedge](p4)--(p5);
\draw[cutedge](p5)--(p6);
\end{scope}

\begin{scope}[shift={(5,0)}, scale=0.75]
\node[rvertex](p1) at (0,0){};
\node[bvertex](p2) at (1,0){};
\node[bvertex](p3) at (2,0){};
\node[bvertex](p4) at (3,0){};
\node[bvertex](p5) at (4,0){};
\node[rvertex](p6) at (5,0){};

\draw[cutedge](p1)--(p2);
\draw[hedge](p2)--(p3);
\draw[hedge](p3)--(p4);
\draw[hedge](p4)--(p5);
\draw[cutedge](p5)--(p6);
\end{scope}

\begin{scope}[shift={(0,0)}, scale=0.75]
\node[rvertex](p1) at (0,0){};
\node[bvertex](p2) at (1,0){};
\node[bvertex](p3) at (2,0){};
\node[bvertex](p4) at (3,0){};
\node[rvertex](p5) at (4,0){};
\node[rvertex](p6) at (5,0){};

\draw[cutedge](p1)--(p2);
\draw[hedge](p2)--(p3);
\draw[hedge](p3)--(p4);
\draw[cutedge](p4)--(p5);
\draw[hedge](p5)--(p6);
\end{scope}
\end{tikzpicture}
\caption{The graph $P_6$ with a matching cut of size~$2$ that is not contained in a (disconnected) perfect matching (left), a disconnected perfect matching with a matching cut of size~$2$ (middle) and a perfect matching cut (of size $3$) (right). In each figure, thick edges denote matching cut edges.}\label{f-examples}
\end{figure}

\subsection{Our Focus} 

We already mentioned the known generalization of {\sc Matching Cut} (i.e. {\sc $1$-Cut}) to {\sc $d$-Cut}. In our paper, we consider a different kind of generalization, namely {\sc Maximum Matching Cut}, which is to determine a {\it largest} matching cut of a connected graph (if a matching cut exists). 
So far,
it is known that {\sc Matching Cut} is fixed-parameter tractable when parameterized by the size of the cut; this even holds for {\sc $d$-Cut} for every $d\geq 1$~\cite{AS21,GS21}. However, for special graph classes, {\sc Maximum Matching Cut}
has only been studied for the extreme case, where the task is to decide if a connected graph has a {\it perfect} matching cut which is a matching cut that is even a {\it perfect} matching, i.e., that
 saturates every vertex; see also Figure~\ref{f-examples}. This variant was introduced as {\sc Perfect Matching Cut} by Heggernes and Telle~\cite{HT98}, who proved that it is \NP-complete. We briefly discuss some very recent results for {\sc Perfect Matching Cut} on special graph classes below.

It is readily seen that the gadget in the \NP-hardness reduction of Heggernes and Telle~\cite{HT98} has diameter~$6$ and radius~$3$. We recall that the \NP-hardness gadget of Le and Le~\cite{LL23} for $(3P_6,2P_7,P_{14})$-free graphs even has diameter~$4$ (and radius~$3$). It is also known that {\sc Perfect Matching Cut} is polynomial-time solvable for graphs of radius (and thus also diameter) at most~$2$~\cite{LPR23a}. Hence, we only obtain a dichotomy for graphs of bounded radius but in this case, only a partial complexity classification for graphs of bounded diameter.

\begin{theorem}[\cite{HT98,LL23,LPR23a}]\label{t-diameter2}
For integers $d$ and $r$, {\sc Perfect Matching Cut} for graphs of diameter~$d$ and for graphs of radius~$r$ is polynomial-time solvable if $d\leq 2$ or $r \leq 2$, respectively, and \NP-complete if $d\geq 4$ or $r\geq 3$, respectively.
\end{theorem}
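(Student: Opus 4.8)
The plan is to assemble the theorem from three ingredients: the polynomial-time algorithm of~\cite{LPR23a} for graphs of small radius; the \NP-hardness reduction of Le and Le~\cite{LL23} (sharpening the original hardness of Heggernes and Telle~\cite{HT98}), which already outputs graphs of small diameter and radius; and a padding operation to reach every larger value of the parameter. For the polynomial cases I would note that $\radius(G)\le\diam(G)$ for every connected graph~$G$, so every graph of diameter at most~$2$ has radius at most~$2$; hence it suffices to invoke the result of~\cite{LPR23a} that {\sc Perfect Matching Cut} is polynomial-time solvable on graphs of radius at most~$2$, which settles the cases $d\le 2$ and $r\le 2$ at once. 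For the base case of the hardness part I would use that the reduction of~\cite{LL23} produces connected graphs, each with at least one edge, of diameter~$4$ and radius~$3$ (as already observed in the discussion preceding the statement), giving \NP-completeness for diameter~$4$ and for radius~$3$.

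The remaining work, and the only genuinely new part, is to lift these two hard cases to every larger value, which I would do by attaching a pendant path of \emph{even} length at a well-chosen vertex. Given a hard instance~$G$ produced by~\cite{LL23}, pick a vertex~$v$ of a prescribed eccentricity $e\in\{3,4\}$ --- such a vertex exists because the eccentricities of a connected graph form an interval of consecutive integers from its radius to its diameter --- and let $G'$ be obtained from~$G$ by adding a path $v\,a_1\,a_2\cdots a_{2j}$. One then verifies two claims. First, $G'$ has a perfect matching cut if and only if~$G$ does: the leaf~$a_{2j}$ forces $a_{2j-1}a_{2j}$ into any perfect matching, which in turn forces $a_{2j-3}a_{2j-2},\dots,a_1a_2$ in and $va_1$ out, so~$v$ is matched inside~$G$; and since each of these forced matching edges must be a cut edge of a perfect matching cut, the two colours are forced to alternate in pairs along the pendant path, so a perfect matching cut of~$G$ extends to one of~$G'$ and conversely. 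Second, a direct distance computation gives $\diam(G')=2j+e$, and $\radius(G')=j+2$ for all $j\ge 2$ when~$v$ has eccentricity~$3$; choosing $e$ and $j$ appropriately then realises every diameter $d\ge 5$ and every radius $r\ge 4$, which together with the base case and the polynomial cases completes the statement.

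I expect the main obstacle to be bookkeeping rather than conceptual: checking that the pendant path shifts the diameter and the radius by exactly the claimed amounts --- in particular that no vertex of~$G$, and no interior vertex of the pendant path, ends up with a smaller eccentricity than intended --- and checking that the correspondence in the first claim really yields a matching \emph{cut}, i.e.\ that both colour classes of~$G'$ remain non-empty, which holds as soon as~$G$ has at least one edge.
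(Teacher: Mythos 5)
Your proposal is correct and rests on the same ingredients the paper uses: the polynomial-time algorithm for radius at most~$2$ from~\cite{LPR23a} (which covers diameter at most~$2$ since $\radius(G)\le\diam(G)$), and the hardness gadget of~\cite{LL23} with diameter~$4$ and radius~$3$. The paper itself gives no proof beyond these citations and the observation about the gadget's diameter and radius; it leaves the lifting to larger parameter values implicit in the cited works. Your pendant-path padding supplies that missing step explicitly, and it checks out: the even pendant path forces $a_{2j-1}a_{2j},\dots,a_1a_2$ into every perfect matching and forces the colours to alternate in pairs, and the arithmetic $\diam(G')=2j+e$ with $e\in\{3,4\}$ and $\radius(G')=j+2$ for $j\ge 2$ does realise every $d\ge 5$ and every $r\ge 4$. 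One small refinement: in the converse direction of your first claim, the reason both colours appear inside $V(G)$ is not merely that $G$ has an edge, but that $M'\cap E(G)$ is a non-empty perfect matching of $G$ consisting entirely of bichromatic edges; stated that way the point is immediate. Everything else is sound.
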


\noindent
For $1\leq h\leq i\leq j$, the graph $S_{h,i,j}$ is the tree of maximum degree~$3$ with exactly one vertex~$u$ of degree~$3$,
whose leaves are at distance~$h$,~$i$ and~$j$, respectively, from~$u$; note $S_{1,1,1}=K_{1,3}$.

It is known that {\sc Perfect Matching Cut} is polynomial-time solvable for 
$S_{1,2,2}$-free graphs (and thus for $K_{1,3}$-free graphs)~\cite{LT22}; $P_6$-free graphs~\cite{LPR23a}; and for pseudo-chordal graphs~\cite{LT22} (and thus for 
chordal graphs, i.e., $(C_4,C_5,\ldots)$-free graphs). Moreover, {\sc Perfect Matching Cut} is polynomial-time solvable for $(H+P_4)$-free graphs if it is polynomial-time solvable for $H$-free graphs~\cite{LPR23a}. It is also known that {\sc Perfect Matching Cut} is \NP-complete even for $3$-connected cubic planar bipartite graphs~\cite{BCD23}, $(3P_6,2P_7,P_{14})$-free graphs~\cite{LL23}, 
 $K_{1,4}$-free bipartite graphs of girth $g$ for every $g\geq 3$~\cite{LT22} and for $H_i^*$-free graphs for every $i\geq 1$~\cite{FLPR23}. This gives us a partial complexity classification:

\begin{theorem}[\cite{FLPR23,LL23,LT22,LPR23a}]\label{t-main2}
For a graph~$H$, {\sc Perfect Matching Cut} on $H$-free graphs is 
\begin{itemize}
\item polynomial-time solvable if $H\ssi sP_4+S_{1,2,2}$ or $sP_4+P_6$ for some $s\geq 0$, and
\item \NP-complete if $H\si K_{1,4}$, $P_{14}$,  $2P_7$, $3P_6$, $C_r$ for some $r\geq 3$ or $H_j^*$ for some $j\geq 1$.
\end{itemize}
\end{theorem}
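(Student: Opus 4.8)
The plan is to obtain this classification by assembling results already in the literature, glued together by two elementary monotonicity observations. First, if $H\ssi H'$ then every $H$-free graph is also $H'$-free, so the class of $H$-free graphs is contained in the class of $H'$-free graphs; consequently, a polynomial-time algorithm for {\sc Perfect Matching Cut} on $H'$-free graphs yields one on $H$-free graphs for every $H\ssi H'$, and an \NP-hardness proof on $H''$-free graphs yields \NP-hardness on $H$-free graphs for every $H$ with $H''\ssi H$. Second, we use the known composition lemma of~\cite{LPR23a}: if {\sc Perfect Matching Cut} is polynomial-time solvable on $H$-free graphs, then it is polynomial-time solvable on $(H+P_4)$-free graphs. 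With these two tools, it suffices to treat the ``maximal'' descriptors $sP_4+S_{1,2,2}$ and $sP_4+P_6$ for the polynomial-time part, and the ``minimal'' descriptors $K_{1,4}$, $P_{14}$, $2P_7$, $3P_6$, $C_r$ and $H_j^*$ for the \NP-complete part.

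For the polynomial-time cases, I would start from the algorithms for {\sc Perfect Matching Cut} on $S_{1,2,2}$-free graphs~\cite{LT22} and on $P_6$-free graphs~\cite{LPR23a}. Iterating the composition lemma $s$ times gives polynomial-time solvability on $(sP_4+S_{1,2,2})$-free graphs and on $(sP_4+P_6)$-free graphs for every $s\geq 0$. The first bullet then follows from the first monotonicity observation: for any $H$ with $H\ssi sP_4+S_{1,2,2}$ or $H\ssi sP_4+P_6$, the class of $H$-free graphs is contained in one of these classes, so the algorithm applies.

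For the \NP-complete cases, the plan is to collect the known hardness results and, in each case of the statement, exhibit a fixed obstruction that occurs as an induced subgraph of $H$. Concretely: {\sc Perfect Matching Cut} is \NP-complete on $K_{1,4}$-free bipartite graphs~\cite{LT22}, which settles every $H$ with $K_{1,4}\ssi H$; it is \NP-complete on $(3P_6,2P_7,P_{14})$-free graphs~\cite{LL23}, and since that class lies simultaneously inside the classes of $3P_6$-free, of $2P_7$-free, and of $P_{14}$-free graphs, hardness follows for each of these and hence for every $H$ with $3P_6\ssi H$, $2P_7\ssi H$, or $P_{14}\ssi H$; it is \NP-complete on $K_{1,4}$-free bipartite graphs of girth~$g$ for every $g\geq 3$~\cite{LT22}, so for a fixed $r\geq 3$ and $g\geq r+1$ we obtain $C_3,\dots,C_r$-free instances, hence \NP-completeness on $C_r$-free graphs and therefore on $H$-free graphs for every $H$ with $C_r\ssi H$ for some $r\geq 3$; and it is \NP-complete on $H_i^*$-free graphs for every $i\geq 1$~\cite{FLPR23}, settling every $H$ with $H_j^*\ssi H$ for some $j\geq 1$.

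The genuine content lives entirely in the cited papers, so the remaining work is essentially bookkeeping: checking that, in each regime of the statement, $H$ is $\ssi$-comparable to one of the listed descriptors, and that the two regimes are consistent, i.e., that none of $K_{1,4}$, $P_{14}$, $2P_7$, $3P_6$, a cycle $C_r$, or a graph $H_j^*$ occurs as an induced subgraph of $S_{1,2,2}$, of $P_6$, or of any $sP_4+S_{1,2,2}$ or $sP_4+P_6$. I expect this consistency check to be the only point needing care; it is routine, but worth spelling out. In particular, one uses that $K_{1,3}=S_{1,1,1}\ssi S_{1,2,2}$, so the claw case is subsumed by the first polynomial-time family, and that $P_6$ is $S_{1,2,2}$-free while $S_{1,2,2}$ is $P_6$-free, which is exactly why the two polynomial-time families are incomparable in the $\ssi$-order and both must appear in the first bullet.
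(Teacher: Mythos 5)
Your proposal is correct and matches the paper's approach exactly: the paper presents Theorem~\ref{t-main2} as a direct compilation of the cited results, namely the polynomial-time algorithms for $S_{1,2,2}$-free and $P_6$-free graphs combined with the ``$+P_4$'' composition lemma of~\cite{LPR23a} for the first bullet, and the hardness results for $(3P_6,2P_7,P_{14})$-free graphs, $K_{1,4}$-free bipartite graphs of arbitrary girth, and $H_i^*$-free graphs for the second bullet. Your derivation of the $C_r$ case from the girth result and your closure under induced subgraphs are exactly the (implicit) bookkeeping the paper relies on.
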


\noindent
From Theorem~\ref{t-main2} it can be seen that again only cases where $H$ is 
a forest,  each connected component of which is either a path or a subdivided claw,
remain open. However, the number of open cases is smaller than for {\sc Matching Cut},
as can be seen from Theorem~\ref{t-main1}. 
So far, all known complexities for {\sc Matching Cut} and {\sc Perfect Matching Cut} on special graph classes coincide except for (sub)cubic graphs.

We note that whenever {\sc Maximum Matching Cut} is polynomial-time solvable for some graph class, then so are {\sc Matching Cut} and {\sc Perfect Matching Cut}.  Similarly, if one of the latter two problems is \NP-complete, then {\sc Maximum Matching Cut} is \NP-hard. For instance, this immediately yields a complexity dichotomy for graphs of maximum degree at most~$\Delta$. Namely, as {\sc Maximum Matching Cut} is trivial if $\Delta=2$ and {\sc Perfect Matching Cut} is \NP-complete if $\Delta=3$, we have a complexity jump 
from $\Delta=2$ to $\Delta=3$, just like {\sc Perfect Matching Cut}; recall that for {\sc Matching Cut} this jump appears from $\Delta=3$ to $\Delta=4$.
We consider the following research question:

\medskip
\noindent
{\it For which graph classes is {\sc Maximum Matching Cut} harder than {\sc Matching Cut} and {\sc Perfect Matching Cut} and for which graph classes do the complexities coincide?}

\subsection{Our Results for Maximum Matching Cut}\label{s-results}

In Section~\ref{s-hard} we show that {\sc Maximum Matching Cut} is \NP-hard for $2P_3$-free quadrangulated graphs of diameter~$3$ and radius~$2$.
We note that the restrictions to radius~$2$ and diameter~$3$ are not redundant: consider, for example, the $P_6$, which is $2P_3$-free but which has radius~$3$ and diameter~$5$.
In the same section, we also show \NP-hardness for subcubic line graphs of triangle-free graphs, or equivalently, subcubic $(K_{1,3},\mbox{diamond})$-free graphs (the diamond is obtained from the $K_4$ after removing an edge).
These \NP-hardness results are in stark contrast to the situation for {\sc Matching Cut} and {\sc Perfect Matching Cut}, as evidenced by Theorems~\ref{t-diameter}--\ref{t-main2} and to the aforementioned result of Moshi~\cite{Mo89} that  {\sc Matching Cut} is polynomial-time solvable for quadrangulated graphs.

Before proving these results, we first show in Section~\ref{s-poly} that {\sc Maximum Matching Cut} is polynomial-time solvable for graphs of diameter~$2$, generalizing the known polynomial-time algorithms for {\sc Matching Cut} and {\sc Perfect Matching Cut} for graphs of diameter at most~$2$. Hence, all three problems have the same dichotomies for graphs of bounded diameter.

We also prove in Section~\ref{s-poly} that {\sc Maximum Matching Cut} is polynomial-time solvable for $P_6$-free graphs, generalizing the previous polynomial-time results for {\sc Matching Cut} and {\sc Perfect Matching Cut} for $P_6$-free graphs. Due to the hardness result for $2P_3$-free graphs, we cannot show polynomial-time solvability for ``$+P_4$'' (as for {\sc Perfect Matching Cut}) or ``$+P_3$'' (as for {\sc Matching Cut}). However, we can prove that if {\sc Maximum Matching Cut} is polynomial-time solvable for $H$-free graphs, then it is so for $(H+P_2)$-free graphs; again, see Section~\ref{s-poly}.
The common proof technique for our polynomial-time results is as follows:\\[-10pt]
\begin{itemize}
\item [1.] Translate the problem into a colouring problem. We pre-colour some vertices either red or blue, and try to extend the pre-colouring to a red-blue colouring of the whole graph via reduction rules. This technique has been used for {\sc Matching Cut} and {\sc Perfect Matching Cut}, but our analysis is different. In particular, the algorithms for {\sc Matching Cut} and {\sc Perfect Matching Cut} on $P_6$-free graphs use an algorithm for graphs of radius at most~$2$ as a subroutine.
We cannot do this for {\sc Maximum Matching Cut}, as we will show \NP-hardness for radius~$2$.
\item [2.] Reduce the set of uncoloured vertices, via a number of branching steps, to an independent set, and then translate the problem into a matching problem.
This is a new proof ingredient. The matching problem is to find a largest matching that saturates every vertex of the independent set of uncoloured vertices. Plesn\'ik~\cite{Pl99} gave a polynomial time algorithm for this\footnote{The polynomial-time algorithm of Plesn\'ik~\cite{Pl99} solves a more general problem. It takes as input a graph~$G$ with an edge weighting~$w$, a vertex subset $S$ and two integers $a$ and $b$. It then finds a maximum weight matching over all matchings that saturate $S$ and whose cardinality is between $a$ and $b$.}, which we will use as subroutine.\\[-10pt]
\end{itemize}
\noindent
The above polynomial-time and \NP-hardness results yield the following three dichotomies for {\sc Maximum Matching Cut} shown in Section~\ref{s-dicho}; in particular we have obtained a complete complexity classification of {\sc Maximum Matching Cut} for $H$-free graphs (whereas such a classification is only partial for the other two problems, as shown in Theorems~\ref{t-main1} and~\ref{t-main2}).

\begin{theorem}\label{t-dichodiam}
For an integer~$d$, \mmc\ on graphs of diameter $d$ is 
\begin{itemize}
\item polynomial-time solvable if $d \leq 2$, and
\item \NP-hard if $d \geq 3$.
\end{itemize}
\end{theorem}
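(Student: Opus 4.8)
The plan is to read off the dichotomy by combining the polynomial-time algorithm developed in Section~\ref{s-poly} with hardness that is either established in Section~\ref{s-hard} or already known for {\sc Matching Cut}; beyond those ingredients the theorem needs only a short argument on each side of the classification.

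\emph{Tractable side ($d\le 2$).} A connected graph of diameter at most~$1$ is a single vertex or a complete graph, and such a graph has a matching cut only when it equals $K_2$, so the case $d\le 1$ is trivial. The case $d=2$ is exactly the polynomial-time result proved in Section~\ref{s-poly}. Hence \mmc\ is polynomial-time solvable whenever $d\le 2$.

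\emph{Hard side ($d\ge 3$).} I would use the decision version of \mmc\ --- given a connected graph $G$ and an integer $k$, does $G$ have a matching cut of size at least $k$? --- which lies in \NP, so that the optimisation problem is \NP-hard once this version is \NP-complete. For every fixed $d\ge 3$, {\sc Matching Cut} is \NP-complete on graphs of diameter~$d$ by Theorem~\ref{t-diameter}. Since $G$ is connected, the empty set is not an edge cut of $G$, so every matching cut of $G$ has at least one edge; thus $G$ has a matching cut if and only if it has a matching cut of size at least $1$. Mapping an instance $G$ of {\sc Matching Cut} to the instance $(G,1)$ of \mmc\ is therefore a correct reduction that does not change the graph, hence preserves the diameter, and this already yields \NP-hardness of \mmc\ for graphs of diameter~$d$ for all $d\ge 3$. (For $d=3$ one can instead appeal to the stronger hardness of Section~\ref{s-hard}, valid already for $2P_3$-free quadrangulated graphs of diameter~$3$ and radius~$2$; for $d\ge 4$ one can alternatively reduce from {\sc Perfect Matching Cut}, which is \NP-complete on graphs of diameter~$d$ by Theorem~\ref{t-diameter2}, taking $k=\lceil |V(G)|/2\rceil$, since a matching of an $n$-vertex graph has at most $\lfloor n/2\rfloor$ edges, so a matching cut of size at least $\lceil n/2\rceil$ exists exactly when $G$ has a perfect matching cut.)

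\emph{Main obstacle.} The assembly itself is routine; the mathematical content sits in the imported results. The step that genuinely stops the theorem from being a corollary of Theorems~\ref{t-diameter}--\ref{t-diameter2} is the polynomial-time algorithm for diameter~$2$. Every diameter-$2$ graph has radius at most~$2$, but \mmc\ is \NP-hard already for radius~$2$ (Section~\ref{s-hard}); consequently that algorithm cannot be obtained by first solving the problem on bounded-radius graphs --- the standard device for {\sc Matching Cut} and {\sc Perfect Matching Cut} on bounded-diameter graphs --- and has to be built by a direct analysis, which I expect to be the hard part of the whole development.
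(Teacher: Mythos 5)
Your proof is correct. The tractable side is exactly the paper's: the case $d\le 1$ is trivial and the case $d=2$ is Theorem~\ref{theo-diam}, whose direct analysis (forced by the \NP-hardness at radius~$2$) is indeed where the real work lies, just as you say. On the hard side you take a genuinely different route from the paper. The paper derives \NP-hardness for $d\ge 3$ from its own new construction, Theorem~\ref{theo-2P3}, which reduces {\sc Exact $3$-Cover} to \mmc\ on $2P_3$-free quadrangulated graphs of diameter~$3$ and radius~$2$. You instead observe that the decision version of \mmc\ with threshold $k=1$ is literally {\sc Matching Cut} on connected graphs, so the known \NP-completeness of {\sc Matching Cut} for diameter~$d\ge 3$ (Theorem~\ref{t-diameter}) transfers verbatim, with the same graph and hence the same diameter. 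Your argument is more elementary and self-contained for this particular theorem; the paper's choice of Theorem~\ref{theo-2P3} is explained by the fact that the same construction is needed anyway for the radius and $H$-freeness dichotomies (Theorems~\ref{t-dichorad} and~\ref{t-dichoH}), where a reduction from {\sc Matching Cut} cannot work: {\sc Matching Cut} is polynomial-time solvable at radius~$2$ and for $2P_3$-free graphs, so only the stronger new gadget certifies hardness there. Your parenthetical alternatives (including the reduction from {\sc Perfect Matching Cut} with $k=\lceil |V(G)|/2\rceil$) are also sound.
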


\begin{theorem}\label{t-dichorad}
For an integer~$r$, \mmc\ on graphs of radius $r$~is 
\begin{itemize}
\item polynomial-time solvable if $r \leq 1$, and
\item \NP-hard if $r \geq 2$.
\end{itemize}
\end{theorem}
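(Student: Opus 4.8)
My plan is to treat the two halves of the dichotomy separately, leaning on Theorem~\ref{t-dichodiam} for the polynomial part and on the hardness result of Section~\ref{s-hard} for the base of the hardness part. For the case $r\le 1$, note that $\radius(G)\le 1$ implies $\diam(G)\le 2\radius(G)\le 2$, so every graph of radius at most~$1$ has diameter at most~$2$; hence \mmc\ is polynomial-time solvable on these graphs by Theorem~\ref{t-dichodiam}. (If $r=0$ the graph is a single vertex and there is nothing to decide.)

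For $r\ge 2$ I would argue by induction on $r$. The base case $r=2$ is exactly the \NP-hardness established in Section~\ref{s-hard} (for $2P_3$-free quadrangulated graphs of diameter~$3$ and radius~$2$). For the inductive step I would prove the following reduction: if \mmc\ is \NP-hard on graphs of radius~$r$, then it is \NP-hard on graphs of radius $r+1$. Given an instance $G$ of radius~$r$, construct $G'$ by attaching to every vertex $c$ of $G$ a pendant triangle, i.e.\ two new vertices $g_1^c,g_2^c$ together with the three edges $cg_1^c$, $cg_2^c$, $g_1^cg_2^c$. Two claims then suffice. First, $\radius(G')=r+1$: the new vertices are attached in a pendant fashion, so $G$ is an isometric subgraph of $G'$, and a short computation gives $\mathrm{ecc}_{G'}(c)=\mathrm{ecc}_{G}(c)+1$ for $c\in V(G)$ and $\mathrm{ecc}_{G'}(g_i^c)=\mathrm{ecc}_{G}(c)+2$, so the minimum eccentricity over $V(G')$ increases by exactly one. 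Second, the size of a largest matching cut is unchanged: in any red--blue colouring of $G'$ witnessing a matching cut, each triangle $\{c,g_1^c,g_2^c\}$ must be monochromatic (a triangle has either $0$ or $2$ bichromatic edges, and $2$ bichromatic edges would leave some vertex with two edges of the cut), so every cut edge of $G'$ lies inside $G$; conversely any matching cut of $G$ extends to $G'$ by colouring each triangle with the colour of its anchor. Hence $G$ has a matching cut of size at least~$k$ if and only if $G'$ does, so $(G,k)\mapsto(G',k)$ is a polynomial-time reduction, and iterating from $r=2$ settles all $r\ge 2$.

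The genuine difficulty lies in the base case, which is carried out in Section~\ref{s-hard}; granting that, the only point above that needs care is pinning the radius of $G'$ down to \emph{exactly} $r+1$ --- one has to check simultaneously that some vertex still realises eccentricity $r+1$ (a deepest vertex of $G$ as seen from a centre now has a pendant triangle hanging off it) and that no vertex of $G'$ has eccentricity at most~$r$ (the freshly added vertices sit one level below their anchors, hence are even farther from the rest of $G'$). Unrolling the induction, one could equivalently give a one-shot construction for a target radius $r\ge 3$ by hanging a series of $r-2$ triangles off each vertex of a radius-$2$ hard instance: this chain is again forced to be monochromatic in every matching cut, so the largest matching cut is preserved, while it reaches depth $r-2$ and therefore raises the radius to exactly $r$.
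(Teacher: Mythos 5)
Your proof is correct, and it differs from the paper's in both halves, so a brief comparison is in order. For $r\le 1$ the paper does not route through the diameter-$2$ algorithm: it observes directly that a radius-$1$ graph has a dominating vertex $u$, so in any valid red--blue colouring the minority colour class is a single vertex adjacent only to $u$; hence a matching cut exists if and only if $G$ has a vertex of degree~$1$, every matching cut has size exactly~$1$, and the maximum is read off by inspection. Your detour via $\diam(G)\le 2\,\radius(G)\le 2$ and Theorem~\ref{theo-diam} is valid but uses a much heavier tool than necessary. For $r\ge 2$ the paper simply cites Theorem~\ref{theo-2P3}, implicitly reading ``graphs of radius~$r$'' monotonically (the hard radius-$2$ instances lie in every class with $r\ge 2$); you instead read the statement as ``radius exactly $r$'' and supply the missing padding argument. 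Your pendant-triangle gadget is sound: the eccentricity computation pins $\radius(G')$ to exactly $r+1$, and by Observation~\ref{lem-cliques-monochrom} (or your direct $0$-or-$2$ bichromatic-edges count) each added triangle is monochromatic with its anchor, so valid red--blue colourings of $G$ and $G'$ correspond bijectively with equal value. This buys a strictly stronger conclusion --- hardness for each exact radius --- at the cost of an induction the paper omits; both arguments ultimately rest on the same base construction of Theorem~\ref{theo-2P3}.
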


\begin{theorem}\label{t-dichoH}
For a graph~$H$, \mmc\ on $H$-free graphs is 
\begin{itemize}
\item polynomial-time solvable if $H\ssi sP_2+P_6$ for some $s\geq 0$, and
\item \NP-hard if $H\si K_{1,3}$, $2P_3$ or $H\si C_r$ for some $r\geq 3$.
\end{itemize}
\end{theorem}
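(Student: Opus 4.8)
The plan is to assemble Theorem~\ref{t-dichoH} from results announced earlier in the introduction — the polynomial-time algorithm for \mmc\ on $P_6$-free graphs and the ``$+P_2$''-closure lemma (both from Section~\ref{s-poly}), together with the two \NP-hardness constructions of Section~\ref{s-hard} (for $2P_3$-free quadrangulated graphs of diameter~$3$ and radius~$2$, and for subcubic line graphs of triangle-free graphs) — plus a short structural argument showing that the two listed families of graphs~$H$ are exhaustive. Throughout I will use the elementary observation that if $H\ssi H'$ then every $H'$-free graph is $H$-free, so \mmc\ on $H$-free graphs generalizes \mmc\ on $H'$-free graphs; in particular \NP-hardness on a subclass transfers upward to any larger class.

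For the polynomial-time side, suppose $H\ssi sP_2+P_6$ for some $s\geq 0$. Then every $H$-free graph is $(sP_2+P_6)$-free, so it suffices to show that \mmc\ is polynomial-time solvable on $(sP_2+P_6)$-free graphs. I would prove this by induction on~$s$: the case $s=0$ is the algorithm for $P_6$-free graphs of Section~\ref{s-poly}, and the inductive step follows from the ``$+P_2$'' closure lemma of Section~\ref{s-poly}, applied to the smaller forest $(s-1)P_2+P_6$.

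For the \NP-hardness side there are three cases. If $H\si K_{1,3}$, then every subcubic line graph of a triangle-free graph is $K_{1,3}$-free and hence $H$-free, so the hardness result of Section~\ref{s-hard} for this class yields \NP-hardness on $H$-free graphs. If $H\si 2P_3$, then every $2P_3$-free quadrangulated graph of diameter~$3$ and radius~$2$ is $H$-free, and Section~\ref{s-hard} again gives \NP-hardness. If $H\si C_r$ for some $r\geq 3$, then {\sc Matching Cut} is already \NP-complete on $H$-free graphs by Theorem~\ref{t-main1}, and therefore \mmc\ is \NP-hard on $H$-free graphs.

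Finally I would verify exhaustiveness. Suppose $H$ is $(K_{1,3},2P_3)$-free and also $C_r$-free for every $r\geq 3$. Being $C_r$-free for all~$r$ forces $H$ to be a forest; being also $K_{1,3}$-free bounds every vertex degree by~$2$, so $H$ is a linear forest; being $2P_3$-free then forbids two components on three or more vertices, so $H$ has at most one component~$P_a$ with $a\geq 3$, all other components being $P_1$ or $P_2$. Moreover $a\leq 6$, since $P_7$ contains an induced $2P_3$ (delete the middle vertex). Absorbing each $P_1$-component into a $P_2$, we obtain $H\ssi sP_2+P_6$ for a suitable $s\geq 0$, i.e.\ the polynomial case, so the dichotomy is complete. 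I expect the real difficulty to sit entirely in Sections~\ref{s-poly} and~\ref{s-hard} — designing the matching-based algorithm for $P_6$-free graphs and proving the closure lemma, and checking that the two hardness gadgets genuinely belong to the claimed subclasses (quadrangulated, $2P_3$-free, of small diameter and radius; respectively subcubic $(K_{1,3},\mbox{diamond})$-free) — whereas the assembly above, and in particular the exhaustiveness argument, is routine.
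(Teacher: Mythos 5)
Your proposal is correct and follows essentially the same route as the paper's own proof: the polynomial side via the $P_6$-free algorithm plus $s$ applications of the ``$+P_2$'' closure result, the hardness side via Theorem~\ref{t-main1} for cycles, the subcubic line-graph construction for $K_{1,3}$, and the $2P_3$-free construction otherwise, with the same exhaustiveness analysis (forest, bounded degree, linear forest, at most one long path component of length at most~$6$). The only cosmetic difference is that the paper folds the exhaustiveness check into its case distinction rather than stating it separately at the end.
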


\subsection{A Second Application of Our Proof Techniques}\label{s-second}

In Section~\ref{s-dpm} we apply our techniques on the optimization variant of the problem~{\sc Disconnected Perfect Matching}. 
A {\it disconnected perfect matching} is a perfect matching that contains a matching cut. 
Disconnected perfect matchings were initially studied for cubic graphs from a graph-structural point of view~\cite{Di00,FJLS03}.
The {\sc Disconnected Perfect Matching} problem, which asks whether a given graph has a disconnected perfect matching, was introduced more recently, by Bouquet and Picouleau~\cite{BP} (under a different name\footnote{Bouquet and Picouleau~\cite{BP} use the name {\sc Perfect Matching-Cut} instead of {\sc Disconnected Perfect Matching}. To avoid confusion with {\sc Perfect Matching Cut} we follow the terminology of Le and Telle~\cite{LT22} and use the name {\sc Disconnected Perfect Matching} instead of {\sc Perfect Matching-Cut}. We also note that in the literature the slightly similar name {\sc Disconnected Matching} appears~\cite{GMPFSS23,GHHL05}. However, this name stands for the problem of determining the size of a largest matching whose vertex set induces a disconnected graph, so it is used for a different problem that does not involve edge cuts.}). 
The problem is closely related to {\sc Perfect Matching Cut}. Namely, every perfect matching cut is a disconnected perfect matching. However, the reverse might not be true, as illustrated by the $C_6$, which has a disconnected perfect matching but no perfect matching cut. 

Bouquet and Picouleau~\cite{BP} proved that {\sc Disconnected Perfect Matching} can be solved in polynomial time for graphs of diameter~$2$ and is \NP-complete for graphs of diameter~$3$ (and thus for graphs of radius~$3$). As the problem is trivial for graphs of radius~$1$, this leads to the following classification (in which the case where the radius is $2$ remains open).

\begin{theorem}[\cite{BP}]\label{t-diameter3}
For integers $d$ and $r$, {\sc Disconnected Perfect Matching} for graphs of diameter~$d$ and for graphs of radius~$r$ is polynomial-time solvable if $d\leq 2$ or $r \leq 1$, respectively, and \NP-complete if $d\geq 3$ or $r\geq 3$, respectively.
\end{theorem}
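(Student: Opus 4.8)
The plan is to treat membership in \NP\ and the two polynomial and two hardness directions separately. Membership is immediate: a certificate is a perfect matching $M$ together with a partition $(B,R)$ of $V(G)$, and one checks in polynomial time that $M$ is a perfect matching and that the nonempty set of $B$--$R$ edges is contained in $M$. So each ``hard'' direction only asks for \NP-hardness and each ``easy'' direction only for a polynomial-time decision procedure. The two extreme polynomial cases are essentially trivial. If $\radius(G)\le 1$ I would argue that either $|V(G)|\le 1$ (so $G$ has no matching cut), or $G$ has a universal vertex $u$; in any matching cut $(B,R)$ with $u\in B$, since $u$ meets all of $R$ but lies in at most one cut edge, $|R|\le 1$, and $|R|=1$ forces $R$ to be a single pendant vertex $r$ whose unique neighbour is $u$. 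Hence $G$ has a disconnected perfect matching if and only if $G$ has a pendant vertex $r$ such that $G-u-r$ has a perfect matching, which I would test with Edmonds' algorithm. The case $\diam(G)=1$ is $G=K_n$, which has a disconnected perfect matching if and only if $n=2$.

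The crux is $\diam(G)=2$. First I would isolate structure: if $(B,R)$ is a matching cut with cut set $M_C$ contained in a perfect matching $M$, set $B_0=B\setminus V(M_C)$ and $R_0=R\setminus V(M_C)$; since $\diam(G)=2$, a vertex of $B_0$ and a vertex of $R_0$ have all their neighbours inside $B$, respectively $R$, hence no common neighbour and distance $\ge 3$, so at most one of $B_0,R_0$ is nonempty, and if $R_0=\emptyset$ then every vertex of $B_0$ is adjacent to all of $B_1:=B\setminus B_0$ and to no vertex of $R$, while $M\cap\binom{B_0}{2}$ is a perfect matching of $G[B_0]$. Using these facts I would branch: either look for a perfect matching cut of $G$ directly (the case $B_0=R_0=\emptyset$), or guess a single cut edge $uv$ (the case $B_0\neq\emptyset$), colour $u$ blue and $v$ red, and propagate colours by the usual reduction rules (a monochromatic vertex with exactly one oppositely coloured neighbour forces its remaining neighbours to its own colour; two oppositely coloured neighbours of a monochromatic vertex is a contradiction), exploiting the diameter-$2$ fact that every still-uncoloured vertex has both a blue and a red neighbour and is therefore saturated by a cut edge regardless of its eventual colour. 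Finally I would show that the residual instance on the uncoloured vertices is simple enough that completing the colouring to a disconnected perfect matching reduces to a maximum-matching computation -- orienting the cut edges at the uncoloured vertices so that they form a matching saturating those vertices, while the monochromatic ``uncovered'' part still admits a perfect matching -- which is precisely the kind of prescribed-set matching problem solved in polynomial time by Plesn\'ik's algorithm.

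For \NP-hardness at diameter $3$, I would reduce from an \NP-complete variant of {\sc Sat} (for instance monotone not-all-equal {\sc 3-Sat}, of the type used in the \NP-hardness proof for {\sc Matching Cut}). Each variable gets a gadget with two ``states'' encoding its truth value, each clause a gadget enforcing the not-all-equal condition, and every gadget is designed so that in each admissible state it possesses a perfect matching while its cut edges are pinned to the boundary between the blue and red sides; all gadgets are wired through a constant-size hub that (i) forces every matching cut to respect a globally consistent assignment, (ii) forces a perfect matching of the whole graph whenever such an assignment exists, and (iii) puts every pair of vertices within distance $3$. A direct check that no vertex of the resulting graph has eccentricity $2$ then gives radius $3$, so the same construction proves \NP-hardness for radius $3$ as well (and hence for every $d\ge 3$ and every $r\ge 3$).

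The hard part will be the diameter-$2$ algorithm: turning the structural observations into a colour-propagation that provably terminates with a residual instance simple enough to be closed by one matching computation, and correctly dispatching the boundary cases (for example when removing the two ``extra'' vertices of $B_0$ disconnects $G$, or when $M_C$ is forced to be empty). By comparison the \NP-hardness is a fairly routine gadget construction whose only subtlety is pinning down the exact diameter and radius of the instance produced.
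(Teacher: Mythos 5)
First, note that the paper does not actually prove this statement: Theorem~\ref{t-diameter3} is imported wholesale from Bouquet and Picouleau~\cite{BP}, with only the trivial radius-$1$ case and the remark that the diameter-$3$ construction also settles radius~$3$ added on top. Your easy cases are fine and essentially coincide with what the paper says elsewhere (the radius-$1$ reduction to a pendant vertex $r$ plus a perfect matching of $G-u-r$ is exactly the argument in the proof of Theorem~\ref{t-dichorad3}), and your structural analysis of a matching cut inside a perfect matching at diameter~$2$ (at most one of $B_0,R_0$ nonempty, $B_0$ complete to the blue interface) is correct. But both load-bearing directions have genuine gaps. For the diameter-$2$ algorithm you stop exactly where the work starts: after propagation the uncoloured set $Z$ need not be independent, its connected components must be monochromatic, and two uncoloured vertices in \emph{different} components can share a coloured neighbour, so choosing the components' colours is a global constraint that a single saturating-matching computation does not capture. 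The paper's proof of the stronger Theorem~\ref{theo-diam} (adapted in Theorem~\ref{theo-diam2}) closes this with a dedicated claim that $G[Z]$ has at most two components of size at least two (using diameter~$2$ again), branches over their at most four colourings to make the residue independent, and only then invokes Plesn\'ik. You flag this as ``the hard part'' but supply no substitute for that claim, so the algorithm is incomplete as stated.

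The \NP-hardness direction is the larger gap: it is a wish-list of gadget properties with no gadgets. The tension you would have to resolve sits precisely between your requirements (i) and (iii): a ``constant-size hub'' that brings all pairs within distance~$3$ is adjacent to a large fraction of the graph, and by Observation~\ref{lem-cliques-monochrom} together with the one-opposite-neighbour rule such a hub tends either to destroy all matching cuts or to make a disconnected perfect matching exist unconditionally. The paper's own diameter-$3$, radius-$2$ construction (Theorem~\ref{theo-2P32}) illustrates the danger: its instances \emph{always} admit some disconnected perfect matching (colour one clique $K_S$ blue and the rest red), which is why it only yields hardness of the maximisation problem and says nothing about the decision problem you are proving here. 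Finally, ``hence for every $d\ge 3$ and every $r\ge 3$'' needs a padding argument if diameter~$d$ means diameter exactly~$d$. As it stands, the hardness half is an intention rather than a proof; the theorem genuinely rests on the explicit construction of~\cite{BP}.
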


\noindent
Bouquet and Picouleau~\cite{BP} also proved that {\sc Disconnected Perfect Matching} is polynomial-time solvable for bipartite graphs of diameter~$3$, $K_{1,3}$-free graphs and $P_5$-free graphs, and that it is \NP-complete for bipartite graphs of diameter~$4$, $K_{1,4}$-free planar graphs, planar graphs of maximum degree~$4$, planar graphs of girth~$5$, and bipartite $5$-regular graphs. As one of their open problems, they asked about the complexity for 
$P_r$-free graphs for $r\geq 6$. 
In~\cite{LPR23a} we showed that the problem is \NP-complete for $(3P_7,P_{19})$-free graphs. This result was latter improved by Le and Le~\cite{LL23} to $(3P_6,2P_7,P_{14})$-free graphs (we recall that they used the same gadget to prove the complexity of three problems simultaneously). Finally, \NP-completeness has recently been shown for graphs of girth at least~$g$ for all fixed $g\geq 3$~\cite{FLPR23}, and thus for $C_s$-free graphs for all $s\geq 3$. 

We now introduce the {\sc Maximum Disconnected Perfect Matching} problem. This problem is to determine a disconnected perfect matching of a connected graph $G$ with a largest matching cut over all disconnected perfect matchings of $G$. This problem might seem artificial at first sight, but turns out to be highly useful for obtaining results for {\sc Disconnected Perfect Matching}; note that polynomial-time results from the optimization version immediately carry over to the original variant.

By making minor modifications to our proofs, we can show exactly the same results for {\sc Maximum Disconnected Perfect Matching} as for {\sc Maximum Matching Cut}. So, in particular we prove that {\sc Maximum Disconnected Perfect Matching} is polynomial-time solvable for $P_6$-free graphs and for $(H+P_2)$-free graphs, if it is so for $H$-free graphs. Hence, combining these two results with the aforementioned result of~\cite{BP} for $K_{1,3}$-free graphs, we immediately find that {\sc Disconnected Perfect Matching} is polynomial-time solvable for $(K_{1,3}+sP_2)$-free graphs and  $(P_6+sP_2)$-free graphs. This means that we
made further progress on the 
aforementioned open problem of~\cite{BP}.
By combining our new results with the above results from~\cite{BP,FLPR23,LL23}, we can now update the state-of-art summary from~\cite{FLPR23}:

\begin{theorem}\label{t-main3}
For a graph~$H$, {\sc Disconnected Perfect Matching} on $H$-free graphs is 
\begin{itemize}
\item polynomial-time solvable if $H\ssi sP_2+K_{1,3}$ or $sP_2+P_6$ for some $s\geq 0$, and
\item \NP-complete if $H\si K_{1,4}$, $P_{14}$, $3P_6$, $2P_7$, $C_r$ for some $r\geq 3$ or $H_j^*$ for some $j\geq 1$.
\end{itemize}
\end{theorem}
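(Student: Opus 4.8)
The plan is to obtain Theorem~\ref{t-main3} by combining the new polynomial-time results of Section~\ref{s-dpm} with results already in the literature, using two routine facts. First, if $H\ssi H'$ then every $H$-free graph is $H'$-free, so a polynomial-time algorithm for $H'$-free graphs applies to $H$-free graphs, while an \NP-hardness construction producing $H'$-free graphs witnesses \NP-hardness for every graph class that contains all $H'$-free graphs. Second, as observed in Section~\ref{s-second}, a polynomial-time algorithm for \mdpm\ on a graph class immediately yields one for \dpm\ on that class, since a largest disconnected perfect matching is in particular a disconnected perfect matching.

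\emph{Polynomial-time direction.} By the first fact it suffices to solve \dpm\ on $(sP_2+K_{1,3})$-free graphs and on $(sP_2+P_6)$-free graphs for every $s\geq 0$. We will prove in Section~\ref{s-dpm} that \mdpm\ is polynomial-time solvable on $P_6$-free graphs, and that if \mdpm\ is polynomial-time solvable on $H$-free graphs then it is so on $(H+P_2)$-free graphs. For the base case of the first family we use the polynomial-time algorithm of Bouquet and Picouleau~\cite{BP} for \dpm\ on $K_{1,3}$-free graphs, noting (by inspecting that constructive algorithm, or by a short direct argument) that \mdpm\ is in fact polynomial-time solvable on $K_{1,3}$-free graphs as well. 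Since $s$ is a constant, applying the $(H+P_2)$-extension $s$ times, starting from $K_{1,3}$-free graphs and from $P_6$-free graphs respectively, yields polynomial-time algorithms for \mdpm, hence by the second fact for \dpm, on $(sP_2+K_{1,3})$-free and on $(sP_2+P_6)$-free graphs.

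\emph{\NP-completeness direction.} Membership of \dpm\ in \NP\ is clear: a disconnected perfect matching is a polynomial-size certificate, and one checks in polynomial time that a given set $M$ of edges is a perfect matching and that deleting $M$ disconnects~$G$ (which is exactly the condition for a perfect matching to contain a matching cut). For hardness we again use the first fact: each listed $H$ contains, as an induced subgraph, one of $K_{1,4}$, $P_{14}$, $3P_6$, $2P_7$, $C_r$ (for some $r\geq 3$) or $H_j^*$ (for some $j\geq 1$), so the class of $H$-free graphs contains a class on which \dpm\ is already known to be \NP-complete: $K_{1,4}$-free graphs, indeed $K_{1,4}$-free planar graphs, by~\cite{BP}; $P_{14}$-free, $3P_6$-free and $2P_7$-free graphs by~\cite{LL23}, whose hardness construction produces $(3P_6,2P_7,P_{14})$-free graphs and hence graphs in each of these three classes; $C_r$-free graphs by~\cite{FLPR23}, via their \NP-completeness for graphs of girth at least $r+1$ (which are in particular $C_r$-free); and $H_j^*$-free graphs also by~\cite{FLPR23}.

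\emph{Where the work is.} The compilation above is routine; the content lies in the two polynomial-time ingredients from Section~\ref{s-dpm}. Transferring the ``colour a few vertices, extend by reduction rules, then solve a matching problem'' strategy from \mmc\ to \mdpm\ is the delicate part, because the red--blue colouring we compute must now certify not merely a matching cut but a \emph{perfect} matching containing one. Concretely, once the branching phase has reduced the uncoloured vertices to an independent set, the matching instance handed to Plesn\'ik's algorithm~\cite{Pl99} has to be set up so that the matching it returns saturates \emph{every} vertex of~$G$, not only the uncoloured ones; this means the already-coloured part of the graph must be accounted for when choosing the vertex set to be saturated and the cardinality bounds. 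A minor additional point, used in the base case above, is to verify that the $K_{1,3}$-free algorithm of~\cite{BP} can be made to output a largest disconnected perfect matching rather than just decide existence.
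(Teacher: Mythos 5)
Your overall strategy --- compiling the theorem from the two new polynomial-time ingredients of Section~\ref{s-dpm} together with known results from \cite{BP,FLPR23,LL23} --- is exactly the paper's, and your treatment of the \NP-completeness direction and of the $sP_2+P_6$ family is correct. However, your handling of the $sP_2+K_{1,3}$ family contains a genuine error. You propose to run the bootstrapping entirely at the level of \mdpm, and for the base case you assert that \mdpm\ is polynomial-time solvable on $K_{1,3}$-free graphs, ``by inspecting'' the algorithm of~\cite{BP} or ``by a short direct argument''. No such argument can exist unless $\PP=\NP$: Theorem~\ref{theo-clawfree2} of this paper shows that \mdpm\ is \NP-hard already for subcubic-degree-bounded line graphs of triangle-free graphs, which are $K_{1,3}$-free, and this is recorded in the hardness part of Theorem~\ref{t-dichoH3}. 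The class of $K_{1,3}$-free graphs is precisely one of the classes where the optimization version is strictly harder than the decision version, so the maximum problem cannot be the vehicle for this family.

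The repair is the one the paper uses: the $(H+P_2)$-extension result, Theorem~\ref{theo-HP22}, is deliberately stated for ``{\sc (Maximum) Disconnected Perfect Matching}'', i.e., it holds for the decision problem \dpm\ as well as for \mdpm. Its proof supports this: when $G$ is $H$-free one calls the assumed algorithm for whichever version is being extended, and when $G$ contains an induced $H$ the branching plus Lemma~\ref{lem-plesnikDPM} in particular decides whether a perfect-extendable colouring exists. One then applies the \emph{decision} version of the extension $s$ times, starting from the \dpm\ algorithm of~\cite{BP} for $K_{1,3}$-free graphs, with no claim about \mdpm\ on that class. With that substitution your argument goes through; everything else in the proposal is consistent with the paper.
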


\noindent
Our new results for {\sc Maximum Disconnected Perfect Matching}, proven in Section~\ref{s-dpm}, also lead to the following three dichotomies, as we will show in Section~\ref{s-dpm} as well.

\begin{theorem}\label{t-dichodiam3}
For an integer~$d$,  {\sc Maximum Disconnected Perfect Matching} on graphs of diameter $d$ is 
\begin{itemize}
\item polynomial-time solvable if $d \leq 2$, and
\item \NP-hard if $d \geq 3$.
\end{itemize}
\end{theorem}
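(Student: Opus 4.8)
# Proof Proposal for Theorem~\ref{t-dichodiam3}

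The plan is to mirror the proof strategy used for \mmc\ in Theorem~\ref{t-dichodiam}, transferring both the polynomial-time result and the hardness result to the optimization variant of {\sc Disconnected Perfect Matching}. For the positive side ($d\leq 2$), I would adapt the algorithm of Section~\ref{s-poly} for \mmc\ on graphs of diameter~$2$. The only structural difference is that the red-blue colouring we seek must now additionally extend to a \emph{perfect} matching of~$G$: every vertex must be saturated either by a cut edge (a red-blue edge) or by a monochromatic matching edge inside its own colour class. So after the branching phase reduces the uncoloured vertices to an independent set~$I$, instead of merely asking for a largest matching saturating~$I$, we ask for a largest matching that saturates $I$ \emph{and} is extendable to a perfect matching of the whole graph; the cut edges contributed must be between the red side and the blue side. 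This is still an instance of the weighted degree-constrained matching problem solved by Plesn\'ik~\cite{Pl99}: we weight the potential cut edges by~$1$ and all other edges by~$0$, require every vertex to be saturated, and maximise total weight — after fixing, via the colouring, which edges are eligible to be cut edges and which are forced to be monochromatic. The branching in step~2 and the translation into a colouring problem in step~1 go through essentially verbatim, since diameter~$2$ is exactly the property exploited by the reduction rules; I would only re-examine each rule to confirm it remains sound when "consistent extension" means "extends to a disconnected perfect matching" rather than "extends to a matching cut".

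For the hardness side ($d\geq 3$), it suffices to show \NP-hardness for $d=3$, since appending a path or a universal-style gadget cannot decrease the diameter (and one can pad to any larger~$d$ by standard means while keeping a disconnected perfect matching correspondence). Here I would reuse — or lightly modify — the gadget from Section~\ref{s-hard} that proves \mmc\ is \NP-hard for graphs of diameter~$3$ and radius~$2$. The key point is that Bouquet and Picouleau~\cite{BP} already proved {\sc Disconnected Perfect Matching} itself (the decision version) is \NP-complete for graphs of diameter~$3$; since \mmc's optimization target dominates the decision question, {\sc Maximum Disconnected Perfect Matching} on diameter-$3$ graphs is at least as hard, giving \NP-hardness immediately. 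So for this theorem the hardness half is essentially a citation of Theorem~\ref{t-diameter3}, combined with the trivial observation that an algorithm computing a largest disconnected perfect matching in particular decides whether one exists.

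Thus the real content is the polynomial-time algorithm for $d\leq 2$, and within it the main obstacle is verifying that the reduction from the colouring problem to Plesn\'ik's matching problem correctly encodes the perfect-matching constraint. Concretely, once the uncoloured vertices form an independent set~$I$, each $v\in I$ has all its neighbours coloured; $v$ must be matched, and if $v$ is (say) blue then it is either matched along a cut edge to an already-red neighbour or it is matched to another uncoloured (hence possibly blue) vertex — but $I$ is independent, so the latter is impossible, forcing $v$ to be matched along a cut edge. This is the same situation as for \mmc, except we must also ensure the \emph{coloured} part of the graph admits a perfect matching compatible with the chosen cut edges: the red side minus its cut-endpoints must have a perfect matching, and likewise the blue side. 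Because diameter~$2$ severely limits the size of monochromatic components after branching (this is exactly what the \mmc\ proof exploits), these residual perfect-matching checks reduce to polynomially many ordinary maximum-matching computations, or can be folded directly into a single Plesn\'ik instance with the appropriate saturation set~$S = V(G)$ and weights. The careful bookkeeping here — matching the branching case analysis of Section~\ref{s-poly} line by line and confirming each branch's correctness under the stricter notion of a valid extension — is where the proof requires the most attention, but it introduces no new ideas beyond those already developed for \mmc.
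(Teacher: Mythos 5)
Your proposal is correct, and the positive half ($d\leq 2$) follows essentially the same route as the paper: the paper's Theorem~\ref{theo-diam2} is obtained by rerunning the diameter-$2$ algorithm for \mmc\ with the lemmas replaced by their perfect-extendable analogues (Lemmas~\ref{l-ll1b}, \ref{l-ll2b} and~\ref{lem-plesnikDPM}). One small remark there: once the uncoloured vertices form an independent set $Z$, the paper does not need the full weighted Plesn\'ik machinery you invoke --- since every perfect matching of $G\setminus(S\cup T)$ must match each $z\in Z$ into $N(Z)$, the number of $Z$-incident cut edges is the same for every such perfect matching, so a single run of Edmonds' algorithm suffices (Lemma~\ref{lem-plesnikDPM}); your weighted formulation also works but is heavier than necessary. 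Where you genuinely diverge is the hardness half: you derive \NP-hardness for $d\geq 3$ by citing the known \NP-completeness of {\sc Disconnected Perfect Matching} for diameter~$3$ (Theorem~\ref{t-diameter3}, due to Bouquet and Picouleau) together with the observation that the optimization problem decides the decision problem. This is valid and requires no new construction. The paper instead proves a fresh hardness result (Theorem~\ref{theo-2P32}, an {\sc Exact $4$-Cover} reduction) producing $2P_3$-free quadrangulated graphs of diameter~$3$ \emph{and radius~$2$}; that extra structure is not needed for the diameter dichotomy but is exactly what powers the radius dichotomy (Theorem~\ref{t-dichorad3}) and the $H$-free dichotomy (Theorem~\ref{t-dichoH3}), which your citation of~\cite{BP} would not yield. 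So your route is a legitimate shortcut for this one theorem, at the cost of not being reusable for the companion dichotomies.
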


\begin{theorem}\label{t-dichorad3}
For an integer~$r$,  {\sc Maximum Disconnected Perfect Matching} on graphs of radius $r$~is 
\begin{itemize}
\item polynomial-time solvable if $r \leq 1$, and
\item \NP-hard if $r \geq 2$.
\end{itemize}
\end{theorem}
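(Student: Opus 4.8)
The plan is to obtain both directions from results that are already available: the diameter dichotomy for \mdpm\ (Theorem~\ref{t-dichodiam3}), the \NP-hardness of \mdpm\ for radius~$2$ coming from the \mmc\ construction of Section~\ref{s-hard} in its \mdpm\ version, and the known hardness of \dpm\ for bounded radius (Theorem~\ref{t-diameter3}). For the polynomial part, note first that a connected graph of radius at most~$1$ has a vertex of eccentricity at most~$1$, that is, a vertex adjacent to all other vertices; hence such a graph has diameter at most~$2$. So if $r\le 1$ we may simply run the polynomial-time algorithm for graphs of diameter at most~$2$ provided by the first item of Theorem~\ref{t-dichodiam3}.

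For the hardness part I would treat $r=2$ and $r\ge 3$ separately. For $r=2$, the \mmc\ reduction of Section~\ref{s-hard}, adapted to \mdpm, already outputs $2P_3$-free quadrangulated graphs of diameter~$3$ and radius~$2$, so \mdpm\ is \NP-hard for graphs of radius~$2$. For $r\ge 3$ nothing new is needed: by Theorem~\ref{t-diameter3}, \dpm\ is \NP-complete for graphs of radius~$r$ for every $r\ge 3$, and a polynomial-time algorithm for \mdpm\ would in particular decide whether some perfect matching contains a matching cut with at least one edge, i.e.\ would solve \dpm; hence \mdpm\ is \NP-hard for graphs of radius~$r$ for all $r\ge 3$ as well. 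Combining the two cases yields \NP-hardness for every $r\ge 2$. (Alternatively, one could stretch the gadget of Section~\ref{s-hard} to any prescribed radius $r\ge 2$, but this is not needed.)

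The only genuinely new ingredient is the case $r=2$, and this is precisely where no earlier result applies: in Theorem~\ref{t-diameter3} the radius-$2$ case of \dpm\ itself is left open. The hard part will therefore be the radius-$2$ construction of Section~\ref{s-hard} (and its adaptation to \mdpm): one must design a reduction whose output has a vertex within distance~$2$ of everything while being ``rigid'' enough that every disconnected perfect matching of the whole graph is forced to interact with the intended variable/clause gadgets. A naive stretching device such as a pendant path is useless here, since it would simultaneously push the radius above~$2$ and create spurious disconnected perfect matchings that use a deep matching edge of the path as a trivial cut edge. Once the radius-$2$ hardness is in place, the cases $r\le 1$ and $r\ge 3$ are routine consequences of Theorems~\ref{t-dichodiam3} and~\ref{t-diameter3}.
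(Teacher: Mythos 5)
Your proposal is correct, and on the hardness side it coincides with the paper's proof: the paper derives \NP-hardness for all $r\ge 2$ from Theorem~\ref{theo-2P32} (the \mdpm\ adaptation of the $2P_3$-free quadrangulated construction of radius~$2$ and diameter~$3$), which is exactly the result you invoke for $r=2$; your additional appeal to Theorem~\ref{t-diameter3} for $r\ge 3$ is sound but not needed, since the radius-$2$ gadget already covers those cases under the paper's conventions. Where you genuinely diverge is the polynomial case $r\le 1$: you note that radius at most~$1$ forces diameter at most~$2$ and then run the diameter-$2$ algorithm of Theorem~\ref{theo-diam2}, whereas the paper argues directly that a radius-$1$ graph has a dominating vertex~$u$, so the only possible matching cuts are single pendant edges $uv$ with $v$ of degree~$1$, and any such edge must belong to every perfect matching; it therefore suffices to test for a perfect matching and for a degree-$1$ vertex. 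Both routes are valid. Yours is quicker to state but rests on the full machinery behind Theorem~\ref{theo-diam2}; the paper's is elementary and self-contained, and in addition pins down the exact (and necessarily unique, of value~$1$) structure of a maximum solution in the radius-$1$ case.
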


\begin{theorem}\label{t-dichoH3}
For a graph~$H$, {\sc Maximum Disconnected Perfect Matching} on $H$-free graphs is 
\begin{itemize}
\item polynomial-time solvable if $H\ssi sP_2+P_6$ for some $s\geq 0$, and
\item \NP-hard if $H\si K_{1,3}$, $2P_3$ or $H\si C_r$ for some $r\geq 3$.
\end{itemize}
\end{theorem}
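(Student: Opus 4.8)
The plan is to obtain Theorem~\ref{t-dichoH3} by mirroring the proof of Theorem~\ref{t-dichoH}, replacing each ingredient used for \mmc\ by its \mdpm\ counterpart; each counterpart is proved by a minor modification of the corresponding \mmc\ argument, as announced in Section~\ref{s-intro}.

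For the polynomial-time part, assume $H \ssi sP_2 + P_6$ for some $s \geq 0$. First I would prove that \mdpm\ is polynomial-time solvable for $P_6$-free graphs; this is the base case $s = 0$. As in the \mmc\ algorithm for $P_6$-free graphs, the task is phrased as extending a red-blue precolouring: some vertices are precoloured, a bounded number of branching reduction rules reduce the set of uncoloured vertices to an independent set, and then a maximum matching subroutine is called. The only change is that the subroutine must return a perfect matching of $G$ that extends the chosen cut edges, which still fits the framework of Plesn\'ik's algorithm~\cite{Pl99} (a maximum-weight matching saturating a prescribed vertex set and of prescribed cardinality). Next I would prove the propagation lemma: if \mdpm\ is polynomial-time solvable for $H'$-free graphs, then it is so for $(H'+P_2)$-free graphs, again by repeating the \mmc\ argument with the same modification. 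Applying the propagation lemma $s$ times to the $P_6$-free case yields polynomial-time solvability for $(sP_2+P_6)$-free graphs, and since $H \ssi sP_2+P_6$ forces every $H$-free graph to be $(sP_2+P_6)$-free, the polynomial part follows.

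For the \NP-hardness part, assume $H \si K_{1,3}$, $H \si 2P_3$, or $H \si C_r$ for some $r \geq 3$; that is, $K_{1,3}$, $2P_3$, or some $C_r$ is an induced subgraph of $H$. Then the class of $K_{1,3}$-free graphs (and likewise the class of $2P_3$-free graphs, and each class of $C_r$-free graphs) is contained in the class of $H$-free graphs, so it suffices to prove \NP-hardness separately for each of these three cases. For $2P_3$-free graphs I would prove, as the \mdpm\ analogue of the \mmc\ hardness result of Section~\ref{s-hard}, that \mdpm\ is \NP-hard even for $2P_3$-free quadrangulated graphs of diameter~$3$ and radius~$2$. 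For $K_{1,3}$-free graphs I would adapt the \mmc\ reduction for subcubic line graphs of triangle-free graphs. For $C_r$-free graphs no new reduction is needed: \dpm\ is already \NP-complete for $C_s$-free graphs for all $s \geq 3$ by the girth result of~\cite{FLPR23}, and since \dpm\ reduces trivially to the decision version of \mdpm, \NP-hardness carries over.

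The step I expect to be the main obstacle is adapting the two \mmc\ hardness reductions to \mdpm. A disconnected perfect matching must saturate every vertex, whereas a matching cut need not; hence the gadgets from the \mmc\ reductions have to be augmented so that the constructed graph always has a perfect matching, and moreover has a disconnected perfect matching whose matching cut has the required size precisely when the source instance is a yes-instance. This is typically done by attaching pendant edges or paired sub-gadgets at the vertices that would otherwise be left unsaturated; one then has to re-verify both the correctness of the reduction and that these additions preserve the relevant structural properties ($2P_3$-freeness, being quadrangulated, and having diameter~$3$ and radius~$2$ in one case; $K_{1,3}$-freeness and subcubicity in the other). The polynomial-time side should be comparatively routine, as the colouring framework and Plesn\'ik's algorithm absorb the extra perfect-matching requirement without structural changes.
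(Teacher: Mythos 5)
Your proposal follows the paper's proof essentially verbatim: the polynomial side is the $P_6$-free algorithm plus $s$ applications of the $+P_2$ propagation theorem, and the hardness side uses the girth result of~\cite{FLPR23} for the cycle case, an adapted line-graph reduction for $K_{1,3}$, and an adapted quadrangulated construction for $2P_3$. The one obstacle you flag --- guaranteeing a perfect matching in the hardness gadgets --- is resolved in the paper not by pendant edges (which would break $K_{1,3}$-freeness) but by enlarging the cliques to even size (size-$6$ cliques with doubled inter-clique edges in the line-graph reduction, and size-$4$ cliques via a reduction from {\sc Exact $4$-Cover} in the $2P_3$-free one), so that unsaturated vertices can be matched internally.
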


\noindent
In Section~\ref{s-con} we conclude our paper by stating a number of open problems.

\section{Preliminaries}\label{s-pre}

We consider finite, undirected graphs without multiple edges and self-loops. 
Let $G=(V,E)$ be a connected graph. For $u\in V$, the set $N(u)=\{v \in V\; |\; uv\in E\}$ is the {\it neighbourhood} of $u$ in $G$, where $|N(u)|$ is the {\it degree} of $u$. 
For $S\subseteq V$, the {\it neighbourhood} of $S$ is the set $N(S)=\bigcup_{u\in S}N(u)\setminus S$. 
The graph $G[S]$ is the subgraph of $G$ {\it induced} by $S\subseteq V$, that is, $G[S]$ is the graph obtained from $G$ after deleting the vertices not in $S$. We say that $S$ is a {\it dominating} set of $G$, and that $G[S]$ {\it dominates} $G$ if every vertex of $V\setminus S$ has at least one neighbour in~$S$. The {\it domination number} of $G$ is the size of a smallest dominating set of~$G$.
The set $S$ is an \emph{independent set} if no two vertices in $S$ are adjacent and $S$ is a \emph{clique} if every two vertices in $S$ are adjacent.
A matching $M$ is \emph{$S$-saturating} if every vertex in~$S$ is an end-vertex of an edge in~$M$. 
An $S$-saturating matching is {\it maximum} if there is no $S$-saturating matching of $G$ with more edges. We will use the following result.

\begin{theorem}[\cite{Pl99}]\label{theo-saturatingMatchings}
For a graph $G$ and set $S \subseteq V(G)$, it is possible in polynomial time to find
a maximum $S$-saturating matching or conclude that $G$ has no $S$-saturating matching.
\end{theorem}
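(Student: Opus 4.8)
The plan is to reduce the task to computing a single maximum-weight matching, which is solvable in polynomial time by Edmonds' blossom algorithm; the only idea needed is that the objective ``saturate as much of $S$ as possible, then use as many edges as possible'' is a lexicographic objective that can be encoded by an appropriate integer edge weighting.

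First I would set $W := |V(G)| + 1$ and assign to every edge $uv \in E(G)$ the weight $w(uv) := W\cdot|\{u,v\}\cap S| + 1$. Since every vertex of $S$ is incident to at most one edge of a matching $M$, we have $\sum_{uv\in M}|\{u,v\}\cap S| = s_M$, where $s_M$ is the number of vertices of $S$ saturated by $M$; hence $w(M) = W\cdot s_M + |M|$. As $0\le |M|\le |V(G)|/2 < W$, a matching $M^{\ast}$ of maximum weight in $(G,w)$ is precisely one that first maximizes $s_M$ and, subject to that, maximizes $|M|$.

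Next I would compute such an $M^{\ast}$ in polynomial time and distinguish two cases. If $s_{M^{\ast}} = |S|$, then $M^{\ast}$ is $S$-saturating, and every $S$-saturating matching $N$ satisfies $w(N) = W|S| + |N| \le w(M^{\ast}) = W|S| + |M^{\ast}|$, so $|N|\le |M^{\ast}|$; thus $M^{\ast}$ is a maximum $S$-saturating matching and I would output it. If $s_{M^{\ast}} < |S|$ (which forces $|S|\ge 1$), then $G$ has no $S$-saturating matching at all: any $S$-saturating matching $N$ would satisfy $w(N)\ge W|S| > W(|S|-1) + |V(G)|/2 \ge w(M^{\ast})$, contradicting the maximality of $M^{\ast}$; so in this case I would report that no $S$-saturating matching exists. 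Note that this also settles the feasibility question, so no separate test is needed.

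There is no deep obstacle here; the points that need care are (i) that the multiplier $W$ must strictly exceed the largest possible cardinality $|V(G)|/2$ for the weighting to realize the intended lexicographic order, and (ii) that invoking a weighted-matching routine is legitimate, i.e.\ Edmonds' algorithm runs in time polynomial in $|V(G)|$ and $|E(G)|$ despite the non-unit weights — which holds since the weights are $O(|V(G)|)$ and hence have encoding length logarithmic in $|V(G)|$. (Alternatively, the statement is the special case of the more general weighted, cardinality-constrained saturating-matching algorithm of Plesn\'ik~\cite{Pl99} obtained by taking all edge weights equal to $1$ and no cardinality bounds.)
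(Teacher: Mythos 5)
Your argument is correct. Note, however, that the paper does not prove this statement at all: it is imported as a black box from Plesn\'ik~\cite{Pl99}, whose algorithm (as the paper's footnote explains) solves the more general problem of finding a maximum-\emph{weight} matching among all matchings that saturate $S$ and have cardinality in a prescribed interval $[a,b]$. What you supply instead is a short, self-contained proof of exactly the special case the paper needs: the lexicographic objective ``maximize the number of saturated $S$-vertices, then the number of edges'' is encoded by the weights $w(uv)=W\cdot|\{u,v\}\cap S|+1$ with $W=|V(G)|+1$, and a single run of a maximum-weight matching algorithm both decides feasibility and returns a maximum $S$-saturating matching when one exists. The key identity $w(M)=W\cdot s_M+|M|$ holds because a matching covers each vertex of $S$ at most once, and your threshold analysis for the two cases is sound; since maximum-weight matching is solvable in strongly polynomial time, the weights being $O(|V(G)|)$ is not even needed. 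Your approach buys a proof from first principles (modulo Edmonds' algorithm) and makes the paper's dependence on~\cite{Pl99} removable for this application; the citation route buys generality (weights and cardinality constraints) that the paper does not actually use.
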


\noindent
The \emph{line graph} of a graph $G$ is the graph $L(G)$ whose vertices are the edges of $G$, such that for every two vertices $e$ and $f$, there exists an edge between $e$ and $f$ in $L(G)$ if and only if $e$ and $f$ share an end-vertex in $G$.
A {\it linear forest} is a forest, each connected component of which is a path.
A bipartite graph with non-empty partition classes $V_1$ and $V_2$ is {\it complete} if there is an edge between 
every vertex of $V_1$ and every vertex of $V_2$. If $|V_1|=k$ and $|V_2|=\ell$, then we denote the complete bipartite graph by~$K_{k,\ell}$.
We will need the following theorem.

\begin{theorem}[\cite{HP10}]\label{t-hp}
A graph $G$ on $n$ vertices is $P_6$-free if and only if each connected induced subgraph of $G$ 
contains a dominating induced $C_6$ or 
a dominating (not necessarily induced) complete bipartite graph.
We can find such a dominating subgraph of $G$ in $O(n^3)$ time.
\end{theorem}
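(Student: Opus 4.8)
The plan is to prove the two implications separately. For the reverse direction I argue by contraposition: suppose $G$ contains an induced $P_6$, on vertices $p_1 p_2 \cdots p_6$. This $P_6$ is itself a connected induced subgraph of $G$, it has no induced $C_6$ since it is acyclic, and since $P_6$ has maximum degree $2$ and contains no $C_4$, its only complete bipartite (not necessarily induced) subgraphs are single edges and paths on three vertices; such a subgraph, together with the closed neighbourhoods of its at most three vertices, meets at most five of the six vertices of $P_6$, hence does not dominate it. So this connected induced subgraph of $G$ contains neither a dominating induced $C_6$ nor a dominating complete bipartite subgraph, which is exactly what the contrapositive needs.

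For the forward direction, $P_6$-freeness is hereditary, so it suffices to prove that every connected $P_6$-free graph $H$ on $m\geq 2$ vertices contains a dominating induced $C_6$ or a dominating complete bipartite subgraph. The key observation is this: if $H$ has a vertex $v$ of eccentricity at most $2$, equivalently with $N[v]$ a dominating set, then the star with centre $v$ and leaf set $N(v)$ is a complete bipartite subgraph of $H$ on vertex set $N[v]$, and since whether a subgraph dominates $H$ depends only on its vertex set, this star is a dominating complete bipartite subgraph of $H$. So the plan is to find such a vertex $v$, and to show that the only obstruction to finding one yields a dominating induced $C_6$.

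To find $v$, consider a longest induced path of $H$, which has at most $5$ vertices because $H$ is $P_6$-free. If it has at most $4$ vertices, then $H$ is connected and $P_5$-free, so by the Bacs\'o--Tuza characterisation of $P_5$-free graphs, $H$ has a dominating clique or a dominating induced $P_3$; a vertex of that clique (or the middle vertex of that $P_3$) then has eccentricity at most $2$, since a vertex dominated by a clique, or by an endpoint of the $P_3$, lies within distance $2$ of such a vertex. When $H$ is even $P_4$-free this is just the fact that a connected cograph is the join of two non-empty graphs and hence has a dominating edge. If instead a longest induced path has $5$ vertices, say $v_1v_2v_3v_4v_5$, then I would show that either $N[v_3]$ dominates $H$, or $H$ has an induced $C_6$: a vertex not dominated by $N[v_3]$ lies at distance at least $3$ from $v_3$, and following a shortest path from it back to $v_3$, together with $v_2$ and $v_4$, while avoiding an induced $P_6$, forces precisely the cyclic attachment pattern realised by $C_6$ itself (for instance, in $C_6$ the vertex antipodal to $v_3$ is not dominated by $N[v_3]$). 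Finally, it remains to handle the case that $H$ has an induced $C_6$, say $C$: if $C$ dominates $H$ we output it; otherwise, on a shortest path from an undominated vertex to $C$, the last vertex before $C$ has at least two neighbours on $C$ (a single attachment, together with the last two vertices of the path and four consecutive vertices of $C$, would induce a $P_6$), and a short case check on the admissible attachment patterns either contradicts $P_6$-freeness or reroutes $C$ through a nearby vertex into a dominating induced $C_6$.

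All of the above is constructive, and a careful implementation of each step -- a longest induced path of a $P_6$-free graph, the Bacs\'o--Tuza decomposition, the tests of whether $N[v]$ is dominating over all $v$, and the local search for a dominating induced $C_6$ -- runs in $O(m^3)$ time; applied to $G$ this gives the stated $O(n^3)$ bound. I expect the main obstacle to be exactly the case analysis surrounding an induced $C_6$: both showing that $N[v_3]$ can fail to dominate only because of an induced $C_6$, and promoting an arbitrary induced $C_6$ to a dominating one, since one has to control many attachment configurations while keeping the graph $P_6$-free (and this same analysis underlies the running-time claim in that case).
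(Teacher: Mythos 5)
This statement is not proved in the paper at all: it is quoted as a black box from van 't Hof and Paulusma~\cite{HP10}, so there is no in-paper argument to measure your attempt against. On its own merits, your backward direction is fine: an induced $P_6$ is itself a connected induced subgraph with no induced $C_6$, and its only complete bipartite subgraphs ($K_{1,1}$ and $K_{1,2}$) fail to dominate it. The forward direction, however, rests on a false dichotomy and therefore has a fatal gap.

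Your plan is to find a vertex $v$ whose closed neighbourhood $N[v]$ is dominating (so that the star at $v$ serves as the complete bipartite subgraph), and to argue that the only obstruction to finding such a $v$ is an induced $C_6$. This is refuted by the graph $H_0$ obtained from $K_{2,2}$, with parts $\{x_1,x_2\}$ and $\{y_1,y_2\}$, by attaching one pendant vertex to each of its four vertices. $H_0$ is connected and $P_6$-free (any induced path meets the $K_{2,2}$ in an induced path of the $4$-cycle, so in at most three vertices, plus at most one pendant at each end), its only cycle is the $4$-cycle, so it has no $C_6$ of any kind, and every vertex has eccentricity at least~$3$, so no $N[v]$ is dominating. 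Concretely, for the longest induced path $a_1x_1y_1x_2a_2$ (with $a_i$ the pendant at $x_i$), the set $N[y_1]$ misses the pendant attached to $y_2$, so your claim that ``either $N[v_3]$ dominates $H$ or $H$ has an induced $C_6$'' is false. The theorem holds for $H_0$ only because the dominating complete bipartite subgraph is the $K_{2,2}$ itself, which is not a star; restricting to stars, as your argument does, genuinely loses the theorem, and no local repair of the choice of centre can fix this. In addition, the two steps you defer to ``case analysis'' --- deriving an induced $C_6$ from a failure of domination, and promoting an arbitrary induced $C_6$ to a dominating one --- constitute essentially the whole content of the proof in~\cite{HP10} and are not supplied here; the $O(n^3)$ running-time claim inherits the same gaps (and even finding a longest induced path in a $P_6$-free graph by brute force is not obviously cubic).
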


\noindent
A {\it red-blue colouring} of a connected graph $G$ colours every vertex of $G$ either red or blue. If every vertex of a set $S\subseteq V$ has the same colour (red or blue), then $S$, and also $G[S]$, are called {\it monochromatic}. An edge with a blue and a red end-vertex is called \emph{bichromatic}.
A red-blue colouring is {\it valid} if every blue vertex has at most one red neighbour; every red vertex has at most one blue neighbour; and both colours red and blue are used at least once. 
A valid red-blue colouring is  {\it perfect-extendable} if there is a perfect matching in $G$ containing all bichromatic edges.
For a valid red-blue colouring of $G$, we let $R$ be the {\it red} set consisting of all vertices coloured red and $B$ be the {\it blue} set consisting of all vertices coloured blue (so $V(G)=R\cup B$). Moreover, the {\it red interface} is the set $R'\subseteq R$ consisting of all vertices in $R$ with a (unique) blue neighbour, and the {\it blue interface} is the set $B'\subseteq B$ consisting of all vertices in $B$ with a (unique) red neighbour in $R$. The {\it value} of a valid red-blue colouring is its number of bichromatic edges, or equivalently, the size of its red (or blue) interface.
A valid red-blue colouring is \emph{maximum} if there is no valid red-blue colouring of the graph with a larger value.
Similarly, a perfect-extendable red-blue colouring is \emph{maximum} if there is no perfect-extendable red-blue colouring of the graph with a larger value.

We can now make the following observations, which can be easily verified (the notion of red-blue colourings has been used before; see, for example,~\cite{Fe23,LPR22}). 

\begin{observation}\label{o} 
For every connected graph $G$ and integer $k$, it holds that 
\begin{itemize}
\item $G$ has a matching cut with at least $k$ edges if and only if $G$ has a valid red-blue colouring of value at least $k$.
\item $G$ has a disconnected perfect matching with at least $k$ edges 
belonging to a matching cut
if and only if $G$ has a perfect-extendable red-blue colouring of value at least $k$.
\end{itemize}
\end{observation}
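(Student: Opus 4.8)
The plan is to prove both equivalences directly, by translating between the combinatorial objects (matching cuts, disconnected perfect matchings) and their colouring counterparts; the point is that ``valid red-blue colouring'' is essentially a relabelling of ``matching cut'', and ``perfect-extendable'' just records the extra perfect-matching condition. No clever idea is needed: one checks that each clause in the definition of a valid (respectively perfect-extendable) red-blue colouring matches a clause in the definition of a matching cut (respectively disconnected perfect matching), so I expect the whole proof to be a few lines per direction. The only place where a sentence of justification is warranted is the verification that the set of bichromatic edges of a valid colouring is both a matching \emph{and} an edge cut, and I will flag that as the (mild) crux below.

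For the first item I would argue as follows. Suppose $G$ has a matching cut $M$ with $|M|\geq k$. By definition $M$ comes with a partition $V(G)=B\cup R$ into nonempty parts such that $M$ is exactly the set of edges with one end in $B$ and one end in $R$. Colour the vertices of $B$ blue and the vertices of $R$ red. Since $M$ is a matching, no vertex is incident with two edges of $M$; equivalently, every blue vertex has at most one red neighbour and every red vertex has at most one blue neighbour. As $B$ and $R$ are nonempty, both colours are used, so this red-blue colouring is valid, and its set of bichromatic edges is precisely $M$, so its value is $|M|\geq k$. For the converse, take a valid red-blue colouring of value at least $k$, let $B,R$ be its blue and red sets, and let $M$ be the set of bichromatic edges; validity forces $M$ to be a matching, and since both colours occur, $B$ and $R$ are nonempty, so after deleting $M$ there is no edge between $B$ and $R$, whence $G-M$ is disconnected and $M$ is a matching cut. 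Its size is the value of the colouring, hence at least $k$.

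For the second item I would reuse this correspondence and additionally keep track of the perfect matching. If $G$ has a disconnected perfect matching, i.e.\ a perfect matching $M'$ containing a matching cut $M$ with $|M|\geq k$, then colouring $G$ by the partition $(B,R)$ witnessing that $M$ is a matching cut gives, by the first item, a valid red-blue colouring whose bichromatic edge set is exactly $M$; since $M\subseteq M'$ and $M'$ is a perfect matching, this colouring is perfect-extendable, of value $|M|\geq k$. Conversely, from a perfect-extendable valid red-blue colouring of value at least $k$, the bichromatic edges again form a matching cut $M$ of size at least $k$ as in the first item, and by definition of perfect-extendability there is a perfect matching $M'$ containing all bichromatic edges, i.e.\ $M\subseteq M'$; hence $M'$ is a disconnected perfect matching with at least $k$ edges lying in a matching cut. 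This completes both directions; as noted, there is no genuine obstacle, since the statement is just a dictionary between the two formulations, and the only care needed is the observation that validity simultaneously yields ``matching'' and, via nonemptiness of both colour classes, ``edge cut''.
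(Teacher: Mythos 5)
Your proof is correct and is exactly the routine verification the paper has in mind when it states this as an observation ``which can be easily verified'' without giving a proof. The dictionary you set up between the partition $(B,R)$ witnessing a matching cut and the colour classes of a valid red-blue colouring, together with the extra perfect-matching bookkeeping for the second item, is precisely the intended argument.
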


\begin{observation}\label{lem-cliques-monochrom}
Every complete graph $K_r$ with $r\geq 3$ and every complete bipartite graph~$K_{r,s}$ with $\min\{r,s\}\geq 2$ and $\max\{r,s\}\geq 3$ is monochromatic.
\end{observation}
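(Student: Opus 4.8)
The plan is to derive everything from the two local degree conditions in the definition of a valid red-blue colouring: each blue vertex has at most one red neighbour and each red vertex has at most one blue neighbour. Neither connectivity of $G$ nor the requirement that both colours be used plays any role; moreover, in the biclique case I will only ever invoke edges joining the two sides, so the statement is insensitive to whether $K_{r,s}$ occurs as an induced subgraph or merely as a spanning subgraph on its vertex set -- which is what matters for the application via Theorem~\ref{t-hp}, whose dominating complete bipartite graph need not be induced.

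For a copy of $K_r$ with $r \geq 3$, I would argue by contradiction: suppose some valid red-blue colouring gives two of its vertices $u$ and $v$ different colours, say $u$ red and $v$ blue. Since $r \geq 3$ there is a third vertex $w$ of the clique, adjacent to both $u$ and $v$. If $w$ is red, then the blue vertex $v$ has two red neighbours $u$ and $w$; if $w$ is blue, then the red vertex $u$ has two blue neighbours $v$ and $w$. Either case contradicts validity.

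For a copy of $K_{r,s}$ with sides $A$ and $B$, where without loss of generality $|A| \geq 3$ and $|B| \geq 2$, suppose for contradiction that both colours appear on $A \cup B$. First, if $B$ is monochromatic, say entirely red, then since $|B| \geq 2$ it contains two red vertices $b_1, b_2$, and every vertex of $A$ is adjacent to both of them, so no vertex of $A$ can be blue; hence $A \cup B$ is monochromatic, a contradiction (the all-blue case is symmetric). So $B$ contains a red vertex $b_1$ and a blue vertex $b_2$. Every vertex of $A$ is adjacent to both $b_1$ and $b_2$; a blue vertex of $A$ is then a blue neighbour of $b_1$, and a red vertex of $A$ is a red neighbour of $b_2$. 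Since $b_1$ has at most one blue neighbour and $b_2$ has at most one red neighbour, $A$ contains at most one blue and at most one red vertex, so $|A| \leq 2$, contradicting $|A| \geq 3$.

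I do not anticipate a genuine obstacle: the statement is a short case analysis. The only points worth making explicit are the reduction of the biclique case to whether each side is monochromatic, and the remark that only cross-edges are used, so that the claim persists for non-induced bicliques.
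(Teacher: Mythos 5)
Your proof is correct; the paper states this observation without proof (``which can be easily verified''), and your case analysis is exactly the standard verification it has in mind. Your explicit remark that only cross-edges are used in the biclique case, so the claim holds for non-induced complete bipartite subgraphs, is a useful point since the dominating biclique from Theorem~\ref{t-hp} used in Theorem~\ref{theo-P6} is indeed not necessarily induced.
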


\section{Polynomial-Time Results for Maximum Matching Cut}\label{s-poly}

In this section we prove three polynomial-time results that we need for obtaining the three dichotomies for {\sc Maximum Matching Cut}, as shown in Theorems~\ref{t-dichodiam}--\ref{t-dichoH}. We first explain our general approach and some helpful lemmas.

The proof of our first lemma for \mmc\ is very similar to the proofs of corresponding lemmas for {\sc Matching Cut}~\cite{Fe23} and {\sc Perfect Matching Cut} \cite{LPR23a}. We include this proof for completeness.
On an aside, the lemma implies that {\sc Maximum Matching Cut} is in \XP\ when parameterized by the domination number of a graph.

\begin{lemma}\label{l-dom}
 For a connected $n$-vertex graph $G$ with domination number~$g$, it is possible to find a maximum red-blue colouring (if a red-blue colouring exists) in $O(2^gn^{g+2})$ time.
\end{lemma}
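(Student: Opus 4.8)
The plan is to guess a dominating set $D$ of size $g$ and the colours of its vertices, and then for each such guess extend the partial colouring to a maximum valid red-blue colouring in polynomial time. First I would enumerate all $O(n^g)$ subsets $D\subseteq V(G)$ of size $g$ (we do not even need to check that $D$ is dominating; it is harmless to try non-dominating sets too), and for each $D$ all $2^g$ ways of colouring the vertices of $D$ red or blue. Fix one such guess. Since $D$ dominates $G$, every uncoloured vertex $v\in V(G)\setminus D$ has a neighbour in $D$. I would then use \emph{propagation / reduction rules}: if an uncoloured vertex sees two vertices of distinct colours among its already-coloured neighbours in a way that would violate validity, or if a coloured vertex has already "used up" its one permitted neighbour of the opposite colour, this forces the colour of some uncoloured vertices. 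Concretely, a red vertex with a blue neighbour must have all its other neighbours red, and symmetrically for blue; applying these forcing rules exhaustively (each step colours at least one new vertex, so at most $n$ rounds, each checkable in $O(n^2)$ time) either reaches a contradiction — in which case this guess is discarded — or produces an enlarged partial colouring that is locally consistent.

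The key remaining point is that, after propagation, the vertices that are still uncoloured form an independent set, or can be made so cheaply, so that the optimisation reduces to a saturating-matching problem solvable via Theorem~\ref{theo-saturatingMatchings}. The reason is that if two adjacent uncoloured vertices $u,w$ both survived propagation, then each of them has a coloured neighbour in $D$; analysing the few cases (both neighbours the same colour, or different colours) shows that either the colour of $u$ or $w$ is in fact forced — contradicting that propagation was exhaustive — or the edge $uw$ is "free" to be made monochromatic by giving $u$ and $w$ the same colour, which never decreases the value of the colouring. Hence w.l.o.g. the set $U$ of uncoloured vertices is independent. Now each $v\in U$ must be coloured; colouring it opposite to exactly one of its coloured neighbours (and equal to all the others, which forces those others' colours to be consistent) contributes one bichromatic edge, while colouring it the same as all its neighbours contributes none but is always allowed. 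So a maximum extension corresponds to choosing, for as many $v\in U$ as possible, a coloured neighbour $n(v)$ of the appropriate colour that is not already saturated and such that no two chosen $v$'s pick the same coloured vertex — i.e.\ a maximum matching in the bipartite "demand graph" between $U$ and the coloured interface vertices that saturates as large a subset of $U$ as possible. Using Theorem~\ref{theo-saturatingMatchings} (with $S$ ranging over subsets of $U$, or directly via Plesn\'ik's weighted version) one finds, in polynomial time, the extension of maximum value for this guess, and we also record the all-monochromatic-on-$U$ option. Taking the best value over all $O(2^g n^g)$ guesses, and reporting "no red-blue colouring exists" if every guess fails and no valid colouring was ever produced, gives the claimed bound: the outer loop costs $O(2^g n^g)$, and for each guess the propagation and matching computations cost $O(n^{c})$ for a small constant $c$, yielding $O(2^g n^{g+2})$ overall.

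The main obstacle I expect is the structural claim that propagation leaves an independent set of uncoloured vertices (or that adjacent uncoloured survivors can always be coalesced without loss), together with the bookkeeping needed to turn "extend the colouring to maximise bichromatic edges" precisely into an $S$-saturating matching instance — in particular making sure that (i) choosing the opposite-colour neighbour for $v\in U$ really does force a consistent colouring of $v$'s remaining neighbours, and (ii) two distinct vertices of $U$ never both need the \emph{same} coloured vertex as their unique opposite-colour neighbour, which is exactly what the matching constraint encodes. Care is also needed to ensure the guessed $D$ with its colouring is compatible with "both colours used": since $g\geq 1$, at least one vertex of $D$ is coloured, and we simply iterate over all colourings, discarding at the end any resulting colouring that is monochromatic on all of $V(G)$.
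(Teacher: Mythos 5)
There is a genuine gap, and it is exactly at the point you flag as the ``main obstacle.'' The paper's proof does not stop at guessing the colours of the dominating set $D$: for every vertex $u\in D$ whose guessed colour is, say, red and which has no blue neighbour inside $D$, it additionally branches over all $O(n)$ choices of which (at most one) neighbour of $u$ is blue, and colours all other neighbours of $u$ red (and symmetrically for blue vertices of $D$). Since $D$ is dominating, after these $O(2^gn^g)$ combined guesses \emph{every} vertex of $G$ is coloured, and all that remains is an $O(n^2)$ validity-and-value check per branch; no propagation and no matching subroutine are needed. Your proposal spends the $n^g$ factor on enumerating candidate sets $D$ instead, and consequently lacks this second layer of guessing. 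But after colouring only $D$, a red vertex of $D$ with no blue neighbour in $D$ forces nothing, so your propagation does not fire, the uncoloured vertices need not form an independent set, and your claimed dichotomy for an adjacent uncoloured pair (``one of them is forced, or they may be given the same colour without loss'') is false.

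Concretely, take $G=P_6$ with vertices $p_1p_2p_3p_4p_5p_6$ and its unique minimum dominating set $D=\{p_2,p_5\}$, coloured $p_2$ blue and $p_5$ red. No colour is forced, and $p_3,p_4$ is an adjacent uncoloured pair whose coloured neighbours have different colours, so neither of your cases applies: the unique maximum valid extension is $p_1,p_4,p_5$ red and $p_2,p_3,p_6$ blue, of value~$3$, in which $p_3p_4$ is bichromatic, whereas forcing $p_3$ and $p_4$ to receive the same colour caps the attainable value at~$2$. Your algorithm as described would therefore output $2$ on $P_6$. The fix is precisely the paper's extra guess of the opposite-coloured neighbour (if any) of each vertex of $D$: this restores the $O(2^gn^g)$ branch count, determines the colour of every vertex of $G$ because $D$ dominates it, and makes the reduction to Theorem~\ref{theo-saturatingMatchings} unnecessary for this lemma (that tool is only needed later, in Lemma~\ref{lem-plesnikMC}, where the independence of the uncoloured set is actually established).
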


\begin{proof}
Let $D$ be a dominating set of $G$ with $|D| = g$. We consider all $2^{|D|}= 2^g$ options of colouring the vertices of $D$ red or blue. For every red vertex of $D$ with no blue neighbour, we consider all $O(n)$ options of colouring at most one of its neighbours blue (and thus all of its other neighbours will be coloured red). Similarly, for every blue vertex of $D$ with no red neighbour, we consider all $O(n)$ options of colouring at most one of its neighbours red (and thus all of its other neighbours will be coloured blue). Finally, for every red vertex in~$D$ with already one blue neighbour in $D$, we colour all its yet uncoloured neighbours red. Similarly, for every blue vertex in $D$ with already one red neighbour in $D$, we colour all its yet uncoloured neighbours blue.

As $D$ is a dominating set, the above means that we guessed a red-blue colouring of the whole graph $G$. We can check in $O(n^2)$ time if a red-blue colouring is valid and count its number of bichromatic edges. We take the valid red-blue colouring with largest value. The total number of red-blue colourings that we must consider is $O(2^gn^g)$.
\end{proof}

\noindent
Our general approach is to guess some ``partial'' red-blue colouring that we then try to extend to a maximum valid red-blue colouring of a graph.
To explain this approach we first modify some terminology from \cite{LPR23a} for matching cuts to work for maximum matching cuts as well.

Let $G=(V,E)$ be a connected graph and $S,T,X,Y\subseteq V$ be four non-empty sets with $S\subseteq X$, $T\subseteq Y$ and $X\cap Y=\emptyset$. A {\it red-blue $(S,T,X,Y)$-colouring} of $G$ is a red-blue colouring of 
$G$, 
with a red set containing $X$; a blue set containing $Y$; a red interface containing $S$ and a blue interface containing $T$. 
To obtain a red-blue $(S,T,X,Y)$-colouring, we start with two disjoint subsets $S''$ and $T''$ of $V$, called a {\it starting pair}, such that

\begin{itemize}
\item [(i)] every vertex of $S''$ is adjacent to at most one vertex of $T''$, and vice versa, and 
\item [(ii)] at least one vertex in $S''$ is adjacent to a vertex in $T''$. 
\end{itemize}

\noindent
Let $S^*$ consist of all vertices of $S''$ with a (unique) neighbour in $T''$, and let $T^*$ consist of all vertices of $T''$ with a (unique) neighbour in $S''$; so, every vertex in $S^*$ has a unique neighbour in $T^*$, and vice versa. We call $(S^*,T^*)$ the {\it core} of $(S'',T'')$. Note that $|S^*|=|T^*|\geq 1$. 

We now colour every vertex in $S''$ red and every vertex in $T''$ blue. 
Propagation rules will try to extend $S''$ to a set $X$, and $T''$ to a set $Y$, by finding new vertices whose colour must always be either red or blue. 
That is, we place new red vertices in the set $X$, which already contains~$S''$, and new blue vertices in the set $Y$, which already contains $T''$. If a red and blue vertex are adjacent, then we add the red one to a set $S\subseteq X$ and the blue one to a set $T\subseteq Y$.
So initially, $S:=S^*$, $T:=T^*$, $X:=S''$ and $Y:=T''$. We let $Z:=V\setminus (X\cup Y)$. 

Our task is to try to extend the partial red-blue colouring on $X\cup Y$ to a {\it maximum} valid red-blue $(S,T,X,Y)$-colouring of $G$, that is, a valid red-blue $(S,T,X,Y)$-colouring that has largest value over all valid red-blue $(S,T,X,Y)$-colourings of $G$. 
In order to do this, we present three propagation rules, which indicate necessary implications of previous choices.

We start with rules R1 and R2, which together correspond to the five rules from~\cite{LL19}. 
Rule R1 detects cases where we cannot extend the partial red-blue colouring defined on $X\cup Y$.
Rule R2 tries to extend the sets $S,T,X,Y$ as much as possible. While the sets $S,T,X,Y$ grow, Rule R2 ensures that we keep constructing a (maximum) valid red-blue $(S,T,X,Y)$-colouring (assuming $G$ has a valid red-blue $(S,T,X,Y)$-colouring).

\begin{enumerate}
\item [\bf R1.] Return {\tt no} (i.e., $G$ has no red-blue $(S,T,X,Y)$-colouring) if a vertex $v\in Z$ is
\begin{itemize}
\item [(i)]  adjacent to a vertex in $S$ and to a vertex in $T$, or
\item [(ii)] adjacent to a vertex in $S$ and to two vertices in $Y\setminus T$, or
\item [(iii)]  adjacent to a vertex in $T$ and to two vertices in $X\setminus S$, or
\item [(iv)] adjacent to two vertices in $X\setminus S$ and to two vertices in $Y\setminus T$.
\end{itemize}
\end{enumerate}

\begin{enumerate}
\item [{\bf R2.}] Let $v\in Z$.
\begin{itemize}
\item [(i)] If $v$ is adjacent to a vertex in $S$ or to two vertices of $X\setminus S$, then move $v$ from $Z$ to $X$. If $v$ is also adjacent to a vertex $w$ in $Y$, then add $v$ to $S$ and $w$ to $T$.
\item [(ii)] If $v$ is adjacent to a vertex in $T$ or to two vertices of $Y\setminus T$, then move $v$ from $Z$ to $Y$. If $v$ is also adjacent to a vertex $w$ in $X$, then add $v$ to $T$ and $w$~to~$S$.
\end{itemize}
\end{enumerate}

\noindent
Suppose that exhaustively applying rules R1 and R2 on a starting pair $(S'', T'')$ does not lead to a no-answer but to a tuple $(S', T',X', Y' )$. Then, we call $(S', T',X', Y' )$ an \emph{intermediate tuple}; see also Figure~\ref{fig-lemma-rules-intermediate}.
A propagation rule is \emph{safe} if for every integer $\nu$ the following holds: $G$ has a valid red-blue $(S,T,X,Y)$-colouring of value $\nu$ before the application of the rule if and only if $G$ has a valid red-blue $(S,T,X,Y)$-colouring of value $\nu$ after the application of the rule. 
Le and Le~\cite{LL19} proved the following lemma, which shows that R1 and R2 can be used safely and which is not difficult to verify. The fact that the value $\nu$ is preserved in Lemma~\ref{l-ll1} (ii) below is implicit in their proof.

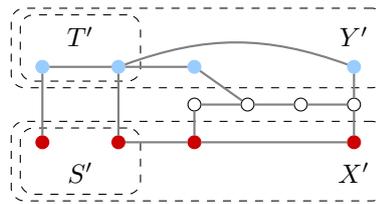
\begin{figure}[ht]
\centering
\begin{tikzpicture}
\tikzstyle{evertex}=[thin,circle,inner sep=0.cm, minimum size=1.7mm, fill=none, draw=black]
\tikzstyle{rahmen1}=[rounded corners = 5pt,draw, dashed, minimum width = 15pt, minimum height = 30pt]
\tikzstyle{rahmen2}=[rounded corners = 5pt,draw, dashed, minimum width = 45pt, minimum height = 25pt]
\tikzstyle{rahmen3}=[rounded corners = 5pt,draw, dashed, minimum width = 140pt, minimum height = 30pt]
	
	\node[rvertex](v1) at (0,0){};
	\node[rvertex](v2) at (1,0){};
	\node[rvertex](v3) at (2,0){};
	\node[evertex](v4) at (2.7,0.5){};
	\node[evertex](v5) at (3.4,0.5){};
	\node[rvertex](v6) at (4.1,0){};
	\node[bvertex](v7) at (0,1){};
	\node[bvertex](v8) at (1,1){};
	\node[bvertex](v9) at (2,1){};
	\node[evertex](v10) at (2,0.5){};
	\node[evertex](v11) at (4.1,0.5){};
	\node[bvertex](v12) at (4.1,1){};
	
	\node[rahmen2](T) at (0.5,1.25){};
	\node[rahmen2](S) at (0.5,-0.25){};
	\node[](Tl) at (0.5, 1.4){$T'$};
	\node[](Sl) at (0.5, -0.4){$S'$};
	\node[rahmen3](S) at (2.05,-0.25){};
	\node[rahmen3](T) at (2.05,1.25){};
	\node[](Tl) at (4.1, 1.4){$Y'$};
	\node[](Sl) at (4.1, -0.4){$X'$};
	
	\draw[hedge](v1)--(v7);
	\draw[hedge](v7)--(v8);
	\draw[hedge](v2)--(v8);
	\draw[hedge](v2)--(v3);
	\draw[hedge](v8)--(v9);
	\draw[hedge](v3)--(v10);
	\draw[hedge](v3)--(v6);
	\draw[hedge](v4)--(v5);
	\draw[hedge](v6)--(v11);
	\draw[hedge](v4)--(v9);
	\draw[hedge](v4)--(v10);
	\draw[hedge](v11)--(v5);
	\draw[hedge](v11)--(v12);
		\path (v8) edge [out=20,in=160, thick, draw=gray] (v12);	
\end{tikzpicture}
\caption{An example (from~\cite{LPR23a}) of a red-blue $(S',T',X',Y')$-colouring of a graph with an intermediate $4$-tuple $(S',T',X',Y')$.}\label{fig-lemma-rules-intermediate}
\end{figure}

\begin{lemma}[\cite{LL19}]\label{l-ll1}
Let $G$ be a connected graph with a starting pair $(S'', T'')$ with core $(S^*, T^*)$, and with an intermediate tuple $(S', T',X', Y' )$. The following three statements hold:\\[-10pt]
\begin{itemize}
\item [(i)] $S^*\subseteq S'$, $S''\subseteq X'$ and $T^*\subseteq T'$, $T''\subseteq Y'$ and $X'\cap Y'=\emptyset$,\\[-10pt]
\item [(ii)] for every integer $\nu$, $G$ has a valid red-blue $(S^*, T^*, S'', T'')$-colouring of value $\nu$ if and only if $G$ has a valid red-blue $(S', T',X', Y' )$-colouring of value $\nu$ (note that the backward implication holds by definition), and\\[-10pt]
\item [(iii)] every vertex in $S'$ has exactly one neighbour in $Y'$, which belongs to $T'$; every vertex in~$T'$ has exactly one neighbour in $X'$, which belongs to $S'$; 
every vertex in $X'\setminus S'$ has no neighbour in $Y'$; every vertex in $Y'\setminus T'$ has no neighbour in $X'$; and every vertex of $V\setminus (X'\cup Y')$ has no neighbour in $S'\cup T'$, at most one neighbour in $X'\setminus S'$, and at most one neighbour in $Y'\setminus T'$.\\[-10pt]
\end{itemize}
Moreover, $(S', T',X', Y' )$ is obtained in polynomial time.
\end{lemma}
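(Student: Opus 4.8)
The plan is to regard statements $(i)$, $(iii)$ and the polynomial running time as facts about the iterative process that applies R1 and R2 exhaustively to the starting pair, and to derive $(ii)$ from the safeness of each individual rule application. First I would record the trivial monotonicity facts: the process initialises $S:=S^*$, $T:=T^*$, $X:=S''$, $Y:=T''$, and every application of R2 only moves vertices from $Z$ into $X$ or $Y$ (and possibly from $X$ into $S$, or from $Y$ into $T$); nothing ever leaves $X\cup Y$ or $S\cup T$. Hence $S^*\subseteq S'$, $S''\subseteq X'$, $T^*\subseteq T'$ and $T''\subseteq Y'$. Disjointness $X'\cap Y'=\emptyset$ is an invariant: it holds initially because $S''\cap T''=\emptyset$, and it is preserved because whenever some $v\in Z$ would be forced simultaneously into $X$ (by R2(i)) and into $Y$ (by R2(ii)), the triggering configuration is precisely one of the cases R1(i)--(iv), so the process has already returned \texttt{no}. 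This gives $(i)$.

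Part $(iii)$ then follows from the process having terminated with neither R1 nor R2 applicable. If some $v\in V\setminus(X'\cup Y')$ had a neighbour in $S'$ or in $T'$, or two neighbours in $X'\setminus S'$, or two in $Y'\setminus T'$, then R1 or R2 would still apply. The remaining claims are maintained as invariants throughout the process: when a vertex is placed in $S'$ it is paired (in R2) with its unique neighbour lying in $Y'$, and that neighbour is placed in $T'$; moreover, once a vertex lies in $S'$, every subsequently processed neighbour of it is forced into $X'$ by R2(i) (and if such a neighbour also lay in $Y'$, R1(i) would have fired), so a vertex of $S'$ never acquires a second neighbour in $Y'$. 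The statements for $T'$, and the ``no neighbour'' claims for $X'\setminus S'$ and $Y'\setminus T'$, are symmetric; their base cases use exactly conditions (i) and (ii) of a starting pair.

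For $(ii)$, the backward implication holds by definition (a valid red-blue $(S',T',X',Y')$-colouring has red set $\supseteq X'\supseteq S''$, blue set $\supseteq Y'\supseteq T''$, red interface $\supseteq S'\supseteq S^*$, blue interface $\supseteq T'\supseteq T^*$, so it is a valid red-blue $(S^*,T^*,S'',T'')$-colouring of the same value). For the forward implication, take a valid red-blue $(S^*,T^*,S'',T'')$-colouring $c$ of value $\nu$ and prove, by induction on the number of rule applications, the invariant that $c$'s red set, blue set, red interface and blue interface contain the current $X$, $Y$, $S$ and $T$, respectively. The base case is the defining property of an $(S^*,T^*,S'',T'')$-colouring. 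In the inductive step every colour assigned by R2 is \emph{forced} under $c$: if $v\in Z$ is adjacent to a vertex $s$ in the red interface, then $s$ already has its unique blue neighbour, which by $(iii)$ lies in $Y$ and is therefore distinct from $v$, so $v$ must be red; if $v$ is adjacent to two vertices of $X\setminus S$, both red under $c$, then $v$ must be red to avoid having two red neighbours; the blue cases and the pairing step are symmetric. Hence the invariant persists, and since R1 cannot fire (each of R1(i)--(iv) would contradict validity of such a $c$), the process ends with $c$ being a valid red-blue $(S',T',X',Y')$-colouring, still of value $\nu$ since $c$ is literally unchanged. The same argument shows that if R1 does fire then no valid red-blue $(S^*,T^*,S'',T'')$-colouring exists at all, so $(ii)$ holds vacuously along that branch.

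The running-time claim is immediate: each application of R2 strictly decreases $|Z|$, so there are at most $|V(G)|$ of them, each step costing polynomial time to detect an applicable rule and update the sets, while R1 is a single terminal test. I expect the only real work to be the case analysis behind the safeness of R1 and R2 --- matching each forced deduction and each \texttt{no}-configuration to exactly the right validity constraint (a red vertex has at most one blue neighbour, and vice versa), and using $(iii)$ to rule out the degenerate possibility that a vertex of $Z$ happens to coincide with the interface-match of one of its already-coloured neighbours. Everything else is bookkeeping.
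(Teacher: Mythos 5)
Your proof is correct. Note, however, that the paper does not prove this lemma at all: it is imported from Le and Le [LL19] with the remark that it ``is not difficult to verify'' and that the preservation of the value $\nu$ in part (ii) is only implicit in their proof. Your write-up supplies exactly the argument being alluded to --- monotonicity and rule-exhaustion for (i) and (iii), and an induction showing every R2 deduction is forced in any valid $(S^*,T^*,S'',T'')$-colouring (with each R1 configuration contradicting validity) for (ii) --- and it correctly makes explicit the one point the paper flags as implicit, namely that the colouring witnessing the forward implication is literally unchanged and hence has the same value. The only unstated convention you rely on, as does the paper, is that R1 is tested with priority over R2, so that a vertex triggering R1 is never silently absorbed into $X$ or $Y$ first.
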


\noindent
Let $(S', T',X', Y ')$ be an intermediate tuple of a graph $G$. Let $Z = V \setminus (X' \cup Y' )$. A red-blue
$(S', T',X', Y' )$-colouring of $G$ is called \emph{monochromatic} if all connected components of $G[Z]$ are
monochromatic. 
We say that an intermediate tuple $(S',T',X',Y')$ is \emph{monochromatic} if every connected component of $G[V\setminus (X'\cup Y')]$ is monochromatic in every valid red-blue $(S',T',X',Y')$-colouring of $G$.
A propagation rule is \emph{mono-safe} if for every integer $\nu$ the following holds: $G$ has a valid monochromatic red-blue $(S,T,X,Y)$-colouring of value $\nu$ before the application of the rule if and only if $G$ has a valid monochromatic red-blue $(S,T,X,Y)$-colouring of value~$\nu$ after the application of the rule.

We now present Rule R3 (which is used implicitly in~\cite{LL19}) and prove that R3 is mono-safe.

\begin{itemize}
\item [{\bf R3.}] If there are two distinct vertices $u$ and $v$ in a connected component $D$ of $G[Z]$ with a common neighbour $w\in X \cup Y$, then colour every vertex of $D$ with the colour of $w$.
\end{itemize}

\begin{lemma} \label{lem-monosafe}
Rule R3 is mono-safe.
\end{lemma}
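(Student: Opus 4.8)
The plan is to show that applying R3 neither creates nor destroys valid monochromatic red-blue $(S,T,X,Y)$-colourings of any given value $\nu$. Since R3 only recolours vertices of a component $D$ of $G[Z]$ (moving them into $X$ or $Y$ and possibly into $S$ or $T$), both directions amount to showing that a colouring on one side corresponds to a colouring of the same value on the other side. First I would fix notation: let $u,v\in D$ have a common neighbour $w\in X\cup Y$, say $w$ is red (the blue case is symmetric), and let $G'$ denote the configuration after R3 has coloured all of $D$ red. I would record the consequence of Lemma~\ref{l-ll1}(iii): before R3, $w\notin S\cup T$ (a vertex of $S$ or $T$ has no neighbour in $Z$), and no vertex of $Z$ has a neighbour in $S\cup T$; moreover each vertex of $D$ has at most one neighbour in $X\setminus S$ and at most one in $Y\setminus T$.

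For the backward direction (a colouring after R3 gives one before R3 of the same value), this is essentially immediate: a valid monochromatic red-blue $(S,T,X,Y)$-colouring of $G'$ already colours all of $D$ red, hence it is a red-blue colouring of $G$ that is consistent with the pre-R3 sets $S,T,X,Y$; validity and monochromaticity are preserved because the colouring is literally unchanged, and the bichromatic edge set — and hence the value — is the same. The only point to check is that it is a $(S,T,X,Y)$-colouring for the \emph{pre-R3} sets, which holds since those sets are subsets of the post-R3 ones by construction of R3.

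The main work is the forward direction: given a valid monochromatic red-blue $(S,T,X,Y)$-colouring $c$ of $G$ (with the pre-R3 sets), I must produce a valid monochromatic colouring of $G'$ of the same value, i.e.\ I must argue that $c$ already colours every vertex of $D$ red. Here is where monochromaticity is used crucially. Since $c$ is monochromatic, $D$ is monochromatic under $c$; suppose for contradiction $D$ is entirely blue. Then the edges $wu$ and $wv$ are both bichromatic (as $w$ is red), so $w$ has two blue neighbours, contradicting validity (a red vertex has at most one blue neighbour). Hence $D$ is red under $c$, so $c$ restricted to $G'$ uses exactly the post-R3 sets and is the desired colouring; its value is unchanged since no edge changed colour. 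The main obstacle I anticipate is purely bookkeeping: making sure that after recolouring $D$ red, the updated sets $S,T,X,Y$ still satisfy the structural invariants needed for ``$(S,T,X,Y)$-colouring'' to make sense (in particular $X\cap Y=\emptyset$ and that a vertex of $D$ placed in $S$ indeed has a unique blue neighbour), but all of this follows from Lemma~\ref{l-ll1}(iii) together with the fact that $D$ being red forces each of its bichromatic edges to go to $Y\setminus T$, of which each $D$-vertex has at most one. I would close by noting the symmetric argument handles the case $w$ blue, completing the proof that R3 is mono-safe.
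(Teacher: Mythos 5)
Your proposal is correct and follows essentially the same route as the paper: the key observation in both is that a red $w$ with two neighbours $u,v$ in $D$ forces at least one of them (hence, by monochromaticity, all of $D$) to be red, so R3 only makes forced assignments and the value is unaffected. The extra bookkeeping you include for the backward direction and the set updates is harmless but not needed beyond what the paper states.
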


\begin{proof}
Say $w\in X\cup Y$ is in $X$, so $w$ is red. Then, at least one of
$u$ and $v$ must be coloured red. Hence, as $D$ must be monochromatic, every vertex of $D$ must be coloured red. Note that the value of a maximum monochromatic red-blue $(S,T,X,Y)$-colouring (if it exists) is not affected.
\end{proof}

\noindent
Suppose that exhaustively applying rules R1--R3 on an intermediate tuple $(S',T',X',Y')$ does not lead to a no-answer but to a tuple $(S,T,X,Y)$. We call $(S,T,X,Y)$ the \emph{final} tuple. The following lemma can be proved by a straightforward combination of the arguments of the proof of Lemma~\ref{l-ll1} with Lemma~\ref{lem-monosafe} and the observation that an application of R3 takes polynomial time, just as a check to see if R3 can be applied. 

\begin{lemma}\label{l-ll2}
Let $G$ be a connected graph with a monochromatic intermediate tuple $(S', T',X', Y' )$ and a resulting final
tuple $(S, T,X, Y)$. The following three statements hold:\\[-10pt]
\begin{itemize}
\item [(i)] $S'\subseteq S$, $X'\subseteq X$, $T'\subseteq T$, $Y'\subseteq Y$, and $X\cap Y=\emptyset$,\\[-10pt]
\item [(ii)] For every integer $\nu$, $G$ has a valid (monochromatic) red-blue $(S', T',X', Y' )$-colouring of value~$\nu$
if and only if $G$ has a valid monochromatic red-blue $(S, T,X, Y)$-colouring of value $\nu$ (note that the backward implication holds by definition), and\\[-10pt]
\item [(iii)] every vertex in $S$ has exactly one neighbour in $Y$, which belongs to $T$; every vertex in~$T$ has exactly one neighbour in $X$, which belongs to $S$; 
every vertex in $X\setminus S$ has no neighbour in $Y$ and no two neighbours in the same connected component of $G[V\setminus (X\cup Y)]$; every vertex in $Y\setminus T$ has no neighbour in $X$ and no two neighbours in the same connected component of $G[V\setminus (X\cup Y)]$; and
every vertex of $V\setminus (X\cup Y)$ has no neighbour in $S\cup T$, at most one neighbour in $X\setminus S$, and at most one neighbour in $Y\setminus T$.\\[-10pt]
\end{itemize}
Moreover, $(S,T,X,Y)$ is obtained in polynomial time.
\end{lemma}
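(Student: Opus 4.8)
The plan is to prove Lemma~\ref{l-ll2} by reducing it to the already-established Lemma~\ref{l-ll1} together with Lemma~\ref{lem-monosafe}, exactly as the paragraph preceding the statement suggests. First I would observe that the final tuple $(S,T,X,Y)$ is obtained from the intermediate tuple $(S',T',X',Y')$ by exhaustively applying rules R1, R2 and R3. Since R1 and R2 never shrink the sets $X',Y',S',T'$ (they only move vertices out of $Z$ into $X$ or $Y$, possibly reclassifying some into $S$ or $T$), and since R3 only recolours vertices of $Z$ — adding them to $X$ or $Y$ with the colour of $w$ and, whenever such a newly coloured vertex is adjacent to a vertex of the opposite colour, into $S$ or $T$ — the monotonicity statement (i) follows immediately by induction on the number of rule applications. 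Here I should be a little careful to phrase R3's effect on the four sets: R3 as stated only recolours a component $D$, but the bookkeeping that places recoloured vertices into $X\cup Y$ (and into $S,T$ when a bichromatic edge is created) is the same implicit bookkeeping used for R1/R2, so the same monotonicity argument applies.

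For (ii) I would argue by induction on the sequence of rule applications that transforms $(S',T',X',Y')$ into $(S,T,X,Y)$. The base case is trivial. For the inductive step there are two cases. If the rule applied is R1 or R2, then by Lemma~\ref{l-ll1}(ii) (applied to the current tuple, playing the role of a starting pair's core/initial sets) the existence of a valid red-blue colouring of a given value $\nu$ is preserved; moreover such a colouring is monochromatic on $G[Z]$ before the step if and only if it is after, because R1/R2 only move vertices whose colour is forced, so they partition $Z$ more finely without changing which components are monochromatic. If the rule applied is R3, then Lemma~\ref{lem-monosafe} gives precisely that a valid \emph{monochromatic} red-blue colouring of value $\nu$ exists before the application if and only if one exists after. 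Chaining these equivalences along the whole sequence, and using that the intermediate tuple is monochromatic (so ``valid'' and ``valid monochromatic'' coincide at the intermediate stage) yields the ``if and only if'' of (ii); the backward implication is noted to hold by definition.

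For (iii), I would note that the properties of $X'\setminus S'$, $Y'\setminus T'$, and $V\setminus(X'\cup Y')$ in Lemma~\ref{l-ll1}(iii) already give most of what is needed, and that the only genuinely new clauses are ``no two neighbours in the same connected component of $G[V\setminus(X\cup Y)]$'' for vertices of $X\setminus S$ and of $Y\setminus T$. These follow directly from the exhaustiveness of R3: if a vertex $w\in X\setminus S$ (say) had two neighbours $u,v$ in the same component $D$ of $G[Z]$, then R3 would still be applicable, contradicting that $(S,T,X,Y)$ is the final tuple. The remaining clauses (every vertex in $S$ has exactly one neighbour in $Y$, in $T$; symmetric for $T$; no vertex of $V\setminus(X\cup Y)$ has a neighbour in $S\cup T$; at most one neighbour in $X\setminus S$ and in $Y\setminus T$) are maintained because R1's no-conditions rule out the bad configurations and R2 moves any vertex violating the ``at most one'' bounds out of $Z$. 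Finally, the polynomial running time follows because R1--R3 strictly enlarge $X\cup Y$ or halt, so at most $|V|$ rounds occur, and each application — as well as each test for applicability, including the test for R3 — takes polynomial time (as observed just before the lemma). The main obstacle, and where the care is needed, is making the R3 step in the induction for (ii) fit the template of Lemma~\ref{l-ll1}: one must be explicit that at every stage the current tuple is monochromatic (inherited from the intermediate tuple, which is monochromatic by hypothesis, together with the fact that R1/R2/R3 preserve this), so that the word ``monochromatic'' in the hypothesis of Lemma~\ref{lem-monosafe} is legitimately available at each application.
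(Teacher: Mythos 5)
Your proposal is correct and follows essentially the same route as the paper, which itself only remarks that the lemma "can be proved by a straightforward combination of the arguments of the proof of Lemma~\ref{l-ll1} with Lemma~\ref{lem-monosafe} and the observation that an application of R3 takes polynomial time, just as a check to see if R3 can be applied." Your write-up is a faithful elaboration of exactly that plan, including the two points that genuinely need care (that monochromaticity of the current tuple is preserved along the rule sequence so that Lemma~\ref{lem-monosafe} is applicable at each R3 step, and that the new clauses in (iii) come from the exhaustiveness of R3).
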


\noindent
The following lemma will be the final step in each of our polynomial-time results. It is an application of Theorem~\ref{theo-saturatingMatchings}. 

 \begin{figure}[ht]
 \begin{center}
\begin{tikzpicture}

\tikzstyle{rahmen1}=[rounded corners = 5pt,draw, dashed, minimum width = 20pt, minimum height = 75pt]
\tikzstyle{rahmen2}=[rounded corners = 5pt,draw, dashed, minimum width = 15pt, minimum height = 55pt]
\tikzstyle{rahmen3}=[rounded corners = 5pt,draw, dashed, minimum width = 20pt, minimum height = 116pt]
\tikzstyle{rahmen4}=[rounded corners = 5pt,draw, dashed, minimum width = 20pt, minimum height = 60pt]
\begin{scope}[yscale = 0.5, xscale = 0.7 ]
\node[rvertex] (x1) at (0,0){};
\node[rvertex] (x2) at (0,1){};
\node[rvertex] (x3) at (0,2){};
\node[rvertex] (x4) at (0,3){};
\node[rvertex] (x5) at (0,4){};

\node[evertex] (z1) at (2,-2){};
\node[evertex] (z2) at (2,-1){};
\node[evertex] (z3) at (2,0){};
\node[evertex] (z4) at (2,1){};
\node[evertex] (z5) at (2,2){};
\node[evertex] (z6) at (2,3){};
\node[evertex] (z7) at (2,4){};
\node[evertex] (z8) at (2,5){};

\node[bvertex] (y1) at (4,-1){};
\node[bvertex] (y2) at (4,0){};
\node[bvertex] (y3) at (4,1){};
\node[bvertex] (y4) at (4,2){};

\draw[tedge] (x1) -- (z2);
\draw[edge] (x2) -- (z3);
\draw[tedge] (x3) -- (z4);
\draw[edge] (x3) -- (z5);
\draw[tedge] (x5) -- (z6);
\draw[edge] (x5) -- (z7);
\draw[edge] (x5) -- (z8);

\draw[tedge] (y1) -- (z1);
\draw[edge] (y1) -- (z2);
\draw[tedge] (y3) -- (z3);
\draw[edge] (y3) -- (z4);
\draw[tedge] (y4) -- (z5);

\node[rahmen1, nicered] (r) at (0,2){};
\node[rahmen2 ] (r) at (2,0.5){};
\node[rahmen3 ] (r) at (2,1.5){};
\node[rahmen4, lightblue] (r) at (4,0.5){};
\node[nicered](x) at (0,5){$X$};
\node[lightblue](y) at (4,3){$Y$};
\node[](x) at (3,2.5){$U$};
\node[](x) at (2,6){$Z$};

\end{scope}

\begin{scope}[yscale = 0.5, xscale = 0.7, shift = {(7,0)} ]
\node[rvertex] (x1) at (0,0){};
\node[rvertex] (x2) at (0,1){};
\node[rvertex] (x3) at (0,2){};
\node[rvertex] (x4) at (0,3){};
\node[rvertex] (x5) at (0,4){};

\node[rvertex] (z1) at (2,-2){};
\node[bvertex] (z2) at (2,-1){};
\node[rvertex] (z3) at (2,0){};
\node[bvertex] (z4) at (2,1){};
\node[rvertex] (z5) at (2,2){};
\node[bvertex] (z6) at (2,3){};
\node[rvertex] (z7) at (2,4){};
\node[rvertex] (z8) at (2,5){};

\node[bvertex] (y1) at (4,-1){};
\node[bvertex] (y2) at (4,0){};
\node[bvertex] (y3) at (4,1){};
\node[bvertex] (y4) at (4,2){};

\draw[edge] (x1) -- (z2);
\draw[edge] (x2) -- (z3);
\draw[edge] (x3) -- (z4);
\draw[edge] (x3) -- (z5);
\draw[edge] (x5) -- (z6);
\draw[edge] (x5) -- (z7);
\draw[edge] (x5) -- (z8);

\draw[edge] (y1) -- (z1);
\draw[edge] (y1) -- (z2);
\draw[edge] (y3) -- (z3);
\draw[edge] (y3) -- (z4);
\draw[edge] (y4) -- (z5);

\node[rahmen1, nicered] (r) at (0,2){};
\node[rahmen2 ] (r) at (2,0.5){};
\node[rahmen3 ] (r) at (2,1.5){};
\node[rahmen4, lightblue] (r) at (4,0.5){};

\node[nicered](x) at (0,5){$X$};
\node[lightblue](y) at (4,3){$Y$};
\node[](x) at (3,2.5){$U$};
\node[](x) at (2,6){$Z$};

\end{scope}
\end{tikzpicture}
 \caption{A $U$-saturating matching (left) and the corresponding valid red-blue colouring (right). Note that not every vertex in $X \cup Y$ belongs to $W$.}\label{fig-saturatingmatching}
 \end{center}
 \end{figure}
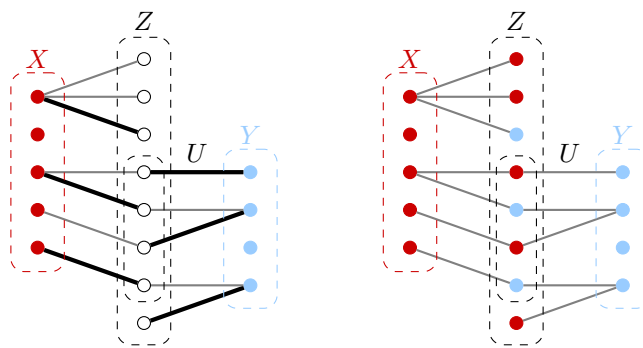
 
 \begin{lemma}\label{lem-plesnikMC}
Let $G=(V,E)$ be a connected graph with a monochromatic intermediate tuple $(S',T',X',Y')$ and a final tuple $(S,T,X,Y)$. If $V\setminus (X\cup Y)$ is an independent set, then it is possible to find in polynomial time either a maximum valid red-blue $(S,T,X,Y)$-colouring of~$G$, or conclude that $G$ has no valid red-blue $(S,T,X,Y)$-colouring.
\end{lemma}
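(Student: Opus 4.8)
The plan is to reduce the problem to computing a maximum $U$-saturating matching in an auxiliary bipartite graph and then to invoke Theorem~\ref{theo-saturatingMatchings}. Write $Z=V\setminus(X\cup Y)$, which is independent by hypothesis. First I would extract from Lemma~\ref{l-ll2}(iii) that no vertex of $Z$ has a neighbour in $S\cup T$ and that every $z\in Z$ has at most one neighbour in $X\setminus S$ and at most one in $Y\setminus T$; since $Z$ is independent and $G$ is connected, $N(z)$ therefore consists of exactly one or two vertices, at most one from each of $X\setminus S$ and $Y\setminus T$. So every $z\in Z$ sees only $X\setminus S$, only $Y\setminus T$, or both sides; let $U$ be the set of vertices of the last kind. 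Let $G'$ be the bipartite graph with parts $Z$ and $(X\setminus S)\cup(Y\setminus T)$ whose edge set is the set of all edges of $G$ with exactly one end-vertex in $Z$.

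The key step is a correspondence between the valid red-blue $(S,T,X,Y)$-colourings of $G$ and the $U$-saturating matchings of $G'$. Since $X$ is red and $Y$ is blue, such a colouring is determined by the colours it assigns to $Z$; the interface conditions hold automatically, because each $s\in S$ has a unique neighbour in $Y$ --- which lies in $T$ and is hence blue --- and no neighbour in $Z$, so $s$ lies in the red interface, and symmetrically each $t\in T$ lies in the blue interface. Using Lemma~\ref{l-ll2}(iii) once more, validity of the colouring is equivalent to the conjunction: every vertex of $X\setminus S$ has at most one blue neighbour in $Z$, and every vertex of $Y\setminus T$ has at most one red neighbour in $Z$. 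Moreover, the only bichromatic edges are the $|S|$ edges between $S$ and $T$ together with the bichromatic edges incident to $Z$, each of which joins a vertex $z\in Z$ to its neighbour in $X\setminus S$ (if $z$ is blue) or to its neighbour in $Y\setminus T$ (if $z$ is red). From a valid colouring I would then read off the matching $M$ of all bichromatic edges incident to $Z$: it is a matching of $G'$ whose size is the value of the colouring minus $|S|$, and it saturates $U$ since a vertex of $U$ is bichromatically joined to one of its two neighbours regardless of its colour. Conversely, from a $U$-saturating matching $M$ of $G'$ I would colour each matched $z$ according to the side into which it is matched, and each unmatched $z$ --- which is then outside $U$ --- towards the single side it sees; one verifies that this colouring is valid and has value $|S|+|M|$. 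Hence a maximum valid red-blue $(S,T,X,Y)$-colouring exists if and only if $G'$ has a $U$-saturating matching, its value is $|S|$ plus the maximum size of such a matching, and Theorem~\ref{theo-saturatingMatchings} applied to $G'$ and $U$ produces the required polynomial-time algorithm.

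I expect the main obstacle to be the converse direction, and it is precisely what forces us to saturate exactly $U$ rather than simply take a maximum matching of $G'$: a vertex seeing both sides that is left unmatched while both of its neighbours are matched cannot be coloured at all without giving its $X\setminus S$-neighbour a second blue neighbour or its $Y\setminus T$-neighbour a second red neighbour, which would break validity. Checking that the reconstructed colouring really is valid is where the degree bounds of Lemma~\ref{l-ll2}(iii) are used crucially: because each vertex of $Z$ has at most one neighbour in $X\setminus S$, every unmatched $Z$-neighbour of a vertex $x\in X\setminus S$ is forced to be coloured red, so $x$ ends up with at most the single blue neighbour matched to it, and symmetrically on the other side. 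Since $G[Z]$ is edgeless, every valid $(S,T,X,Y)$-colouring considered here is automatically monochromatic on the connected components of $G[Z]$, so the monochromatic condition needs no separate treatment.
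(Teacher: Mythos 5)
Your proposal is correct and follows essentially the same route as the paper's proof: identify the set $U$ of uncoloured vertices with a neighbour in both $X\setminus S$ and $Y\setminus T$, show that valid red-blue $(S,T,X,Y)$-colourings correspond to $U$-saturating matchings (with value $|S|$ plus the matching size), and invoke Theorem~\ref{theo-saturatingMatchings}. The only (cosmetic) difference is that you run the matching computation in the bipartite graph of $Z$--$N(Z)$ edges rather than in $G[N(Z)\cup Z]$ as the paper does, which if anything is slightly cleaner since it excludes irrelevant edges inside $N(Z)$.
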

 
 \begin{proof} 
Let $Z=V\setminus (X\cup Y)$. Let $W=N(Z)$. Recall that $Z$ is independent. Hence, by Lemma~\ref{l-ll2}-(iii), every vertex of $W$ belongs to $(X\setminus S) \cup (Y\setminus T)$.
 Let $U\subseteq Z$ consist of all vertices of $Z$ that have a neighbour in both $X\setminus S$ and $Y\setminus T$.
 We claim that the set of bichromatic edges of every valid red-blue $(S,T,X,Y)$-colouring is the union of a $U$-saturating matching in $G[W\cup Z]$ (if it exists) and the set of edges with one end-vertex in $S$ and the other one in $T$. 
 
First suppose that $G[W\cup Z]$ has a $U$-saturating matching~$M$. We colour every vertex in~$X$ red and every vertex in $Y$ blue.
Let $z\in Z$. First assume that $z$ is incident to an edge $zw\in M$. If $w\in X\setminus S$, then colour $z$ blue. If $w\in Y\setminus T$, then colour $z$ red. Now suppose $z$ is not incident to an edge in $M$. Then $z\notin U$, as $M$ is $U$-saturating. Hence, either every neighbour of $z$ belongs to $X\setminus S$ and is coloured red, in which case we colour $z$ red, or every neighbour of $z$ belongs to $Y\setminus T$ and is coloured blue, in which case we colour $z$ blue. 
This gives us a valid red-blue $(S,T,X,Y)$-colouring of $G$. See also Figure~\ref{fig-saturatingmatching}.
 
Now suppose that $G$ has a valid red-blue $(S,T,X,Y)$-colouring. By definition, every vertex of $X$ is coloured red, and every vertex of $Y$ is coloured blue. By Lemma~\ref{l-ll2}-(iii),
every edge with an end-vertex in $S$ and the other one in $T$ is bichromatic, and there are no other bichromatic edges in $G[X\cup Y]$.
Let $M$ be the set of other bichromatic edges. Then, every vertex of $M$ has one vertex in $Z$ and the other one in $W$. Moreover, if $z\in U$, then $z$ has a red neighbour (its neighbour in $X\setminus S$) and a blue neighbour (its neighbour in $Y\setminus T$). Hence, no matter what colour $z$ has itself, $z$ is incident to a bichromatic edge of $M$. We conclude that $M$ is $U$-saturating, and the claim is proven.
 
 From the above claim, it follows that all we have to do is to find a maximum $U$-saturating matching in $G[W\cup Z]$. By Theorem~\ref{theo-saturatingMatchings}, this takes polynomial time.
 \end{proof}
 
 \noindent
 We are now ready to prove our first polynomial-time result.
 
\begin{theorem} \label{theo-P6}
\mmc\ is solvable in polynomial time for $P_6$-free graphs.
\end{theorem}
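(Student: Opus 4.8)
The plan is to compute a maximum valid red-blue colouring of $G$, or to detect that $G$ has none, which by Observation~\ref{o} amounts to finding a largest matching cut. The starting point is Theorem~\ref{t-hp}: in polynomial time we obtain a connected dominating subgraph $D$ of $G$ that is either an induced $C_6$ or a complete bipartite graph $K_{k,\ell}$ with $1\le k\le\ell$.

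If $D$ is an induced $C_6$, or $D=K_{k,\ell}$ with $k+\ell=O(1)$ (in particular $D\in\{K_{1,1},K_{1,2},K_{1,3},K_{2,2}\}$), then $G$ has domination number $O(1)$ and Lemma~\ref{l-dom} already returns a maximum valid red-blue colouring in polynomial time. So assume $D=K_{k,\ell}$ with $\ell\ge 3$, and write $A,B$ for its parts with $|A|=k\le\ell=|B|$. Here I would follow the two-step scheme from the introduction: guess, in polynomially many ways, enough information to fix the colours of all of $V(D)$ together with one cut edge; run Rules~R1--R3 to a final tuple $(S,T,X,Y)$; and finish with Lemma~\ref{lem-plesnikMC}. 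If $k\ge 2$, then, since $\ell\ge 3$, Observation~\ref{lem-cliques-monochrom} shows $V(D)$ is monochromatic in every valid red-blue colouring, so we branch over its two possible colours, say blue, and (both colours being used) over the $O(n^2)$ choices of a cut edge $st$ with $s$ red and $t$ blue; the starting pair is then $(\{s\},\ \{t\}\cup V(D))$. If $k=1$, then $D$ is a star with centre $c$; since $c$ has at most one neighbour of the opposite colour, branching over the colour of $c$ and over which leaf of $B$ (if any) deviates ($2(\ell+1)$ branches) again fixes the colouring of $V(D)$, and we take the starting pair determined by this colouring, supplemented by a guessed cut edge when $V(D)$ is monochromatic.

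In each branch we apply Rules~R1--R3 exhaustively (Lemmas~\ref{l-ll1}, \ref{lem-monosafe} and~\ref{l-ll2}) to reach a final tuple $(S,T,X,Y)$. Since $V(D)$ dominates $G$, Lemma~\ref{l-ll2}(iii) forces every still-uncoloured vertex to have a tightly restricted neighbourhood in $V(D)$ (a single neighbour in the case $k\ge 2$) and no neighbour in $S\cup T$, and exhaustively applying R3 means that no two uncoloured vertices of one component of $G[Z]$ share a neighbour in $X\cup Y$, where $Z=V\setminus(X\cup Y)$. The crux of the proof---and the step I expect to be the main obstacle---is the structural claim that, for a suitable branch, $P_6$-freeness forces the intermediate tuple to be \emph{monochromatic} and the final $Z$ to be an \emph{independent set}. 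The intuition is that although $K_{k,\ell}$ is itself $P_4$-free, any two vertices of $B$ are joined, through any $a\in A$, by an induced path $b-a-b'$; hence a ``deep'' or ``spread-out'' red remnant---say two adjacent uncoloured vertices anchored at distinct vertices of $A\cup B$, or a pendant-like vertex anchored far from such a remnant---would extend one of these $P_3$'s to an induced $P_6$. Therefore the red side sits near a bounded part of $V(D)$, and then the single guessed cut edge, together with Rule~R2 (which colours every neighbour of a red-interface vertex red), colours the whole red side, leaving only an independent set of genuinely ambiguous vertices. Turning this into a proof requires a case analysis over where the anchors lie in $A\cup B$ and whether they coincide; this is also where the separate treatment of the star case and the cut-edge guess are used.

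Granting the structural claim, for each surviving branch Lemma~\ref{lem-plesnikMC}---an application of Plesn\'ik's algorithm (Theorem~\ref{theo-saturatingMatchings}) to $Z$---outputs in polynomial time a maximum valid red-blue $(S,T,X,Y)$-colouring, or reports that none exists. Since there are only polynomially many branches and each is processed in polynomial time, the colouring of largest value over all of them is a maximum valid red-blue colouring of $G$, hence a largest matching cut; if every branch fails, $G$ has no matching cut at all.
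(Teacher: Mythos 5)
Your setup (dominating $C_6$ or $K_{k,\ell}$ via Theorem~\ref{t-hp}, Lemma~\ref{l-dom} for the bounded cases, guessing the colouring of $V(D)$ plus one cut edge, propagating with R1--R3, finishing with Lemma~\ref{lem-plesnikMC}) matches the paper. The monochromaticity of the intermediate tuple is also correct, although it comes from domination rather than from $P_6$-freeness: exactly one vertex of the pre-coloured set is red and all of its neighbours outside that set get forced red by R2, so every uncoloured vertex has a blue neighbour of $F$ in $Y'\setminus T'$, and a bichromatic edge inside $G[Z']$ would give a red vertex two blue neighbours.

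The genuine gap is your structural claim that after R1--R3 the uncoloured set $Z$ is independent. This is false, and no case analysis on where the ``anchors'' lie will rescue it: $G[Z]$ can contain arbitrarily many components with two or more vertices, namely components each of whose vertices has its own private blue neighbour in $F$ and no coloured red neighbour. Rule R3 never fires on such a component, and nothing determines its colour, since colouring it entirely blue is always locally consistent. What $P_6$-freeness actually yields (the paper's Claim~\ref{c-one}) is weaker but sufficient: in any valid extension \emph{at most one} component of $G[Z]$ of size at least two can be coloured red, because two red such components produce an induced $P_6$ through $F$ (their blue anchors $w_1,w_2$ in $F$ are either adjacent or, since $F$ is complete bipartite, have a common neighbour $w_3\in V(F)$, and validity forbids the extra adjacencies that would break the path). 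This forces an additional round of branching that your proposal omits: try all $O(n)$ choices of which component of size at least two (if any) is red, colour all the others blue, re-propagate with R1--R3, and only then is the remaining uncoloured set independent so that Lemma~\ref{lem-plesnikMC} applies. This extra branch is not merely a feasibility technicality; the choice of which large component is red changes the number of bichromatic edges, so it is essential for computing a \emph{maximum} matching cut rather than just deciding existence.
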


\begin{proof}
Let $G=(V,E)$ be a connected $P_6$-free graph. By Observation~\ref{o} it suffices to find a maximum valid red-blue colouring of $G$.
By Theorem~\ref{t-hp}, we find in polynomial time either a dominating induced $C_6$ or 
a dominating (not necessarily induced) complete bipartite graph $K_{r,s}$ in $G$. 

If $G$ has a dominating induced $C_6$, then $G$ has domination number at most~$6$, and we apply Lemma~\ref{l-dom}.
Suppose that $G$ has a dominating complete bipartite graph $F$ with partition classes $\{u_1,\ldots,u_r\}$ and
$\{v_1,\ldots,v_s\}$. We may assume without loss of generality that $r\leq s$. If $s\leq 2$, then $G$ has domination number at most~$4$, and we apply Lemma~\ref{l-dom} again. So we assume that $s\geq 3$.

If $r\geq 2$, then $V(F)$ must be monochromatic in any valid red-blue colouring of $G$ by Observation~\ref{lem-cliques-monochrom}. In this case we colour every vertex of $V(F)$ blue.
If $r=1$, then we may assume without loss of generality that $N(u_1)=\{v_1,\ldots,v_s\}$. In this case we colour $u_1$ blue, and we branch over all $O(n)$ options of colouring at most one vertex of $N(u_1)$ red.

So, now we consider a red-blue colouring of $F$. It might be that $F$ is monochromatic (in particular, this will be the case if $r\geq 2$).
If $F$ is monochromatic, then every vertex of $F$ is blue. In order to get a starting pair with a non-empty core, we branch over all $O(n^2)$ options of choosing a bichromatic edge (one end-vertex of which may belong to $F$).
Let $D$ be the set of all coloured vertices, that is, $D$ contains $V(F)$ and possibly one or two other vertices.
By construction, exactly one vertex of $D$ is coloured red, and all other vertices of $D$ are blue.

Let $S^* = S''$ be the set containing the red vertex of $D$. Let $T^*$ be the singleton set containing the blue neighbour of the vertex in $S^*$. Let $T''$ be the set of blue vertices, so $T^* \subseteq T''$. 
We exhaustively apply rules R1 and R2 on the starting pair $(S'',T'')$. By Lemma~\ref{l-ll1} we either find in polynomial time that $G$ has no valid red-blue $(S^*,T^*,S'',T'')$-colouring, and we discard the branch, or 
we obtain an intermediate tuple $(S',T',X',Y')$ of $G$. Suppose the latter case holds. We prove the following two claims for the set $Z' = V\setminus (X' \cup Y')$ of uncoloured vertices. 

\begin{claim1} \label{c-blue}
Every vertex $z\in Z'$ has a neighbour in $Y'\setminus T'$ that belongs to $F$.
\end{claim1}

\begin{claimproof}
As $F$ is dominating, $z$ has a neighbour in $F$.
Since $D\supseteq V(F)$ contains exactly one red vertex $x$, which has a blue neighbour in $D$, all neighbours of $x$ in $G-D$ are coloured red, that is, belong to $X$.
As $z\in G-D$ belongs to $Z'$, this means that $x$ and $z$ are non-adjacent. So, the neighbour of $z$ in $F$ must belong to $Y'\setminus T'$ (as else we could have applied R2).
\end{claimproof}

\begin{claim1} \label{claim-monochrom}
The intermediate tuple $(S',T',X',Y')$ is monochromatic. 
\end{claim1}

\begin{claimproof}
Suppose for a contradiction that there is an edge $uv\in E(G[Z'])$ such that $u$ is blue and $v$ is red. Then $v$ has two blue neighbours by Claim~\ref{c-blue}, a contradiction.
\end{claimproof}
 
 \noindent
Since Claim~\ref{claim-monochrom} holds, we may now exhaustively apply R1--R3 to the intermediate tuple $(S',T',X',Y')$. By Lemma~\ref{l-ll2} we either find in polynomial time that $G$ has no valid red-blue $(S',T',X',Y')$-colouring, and thus no valid red-blue $(S^*,T^*,S',T')$-colouring, and we discard the branch, or we obtain a final tuple $(S,T,X,Y)$ of $G$. Again, we let $Z=V\setminus (X\cup Y)$. 
By the same lemma and Claim~\ref{c-blue}, 
the following holds for every (uncoloured) vertex $z\in Z$:
\begin{itemize}
\item $z$ has at most one neighbour in $X\setminus S$,
\item $z$ has exactly one neighbour in $Y\setminus T$, which belongs to $F$, and
\item if $z'$ is in the same connected component of $G[Z]$ as $z$, then $z$ and $z'$ do not share a neighbour in $G-Z$.
\end{itemize}

\begin{figure}[ht]
\begin{center}
\begin{tikzpicture}
\begin{scope}[scale = 0.5, shift = {(12,0)}]

\node[rvertex](r1) at (0,0){};
\node[rvertex](r2) at (2,0){};

\node[rvertex](r3) at (5,0){};
\node[rvertex](r4) at (7,0){};

\node[bvertex](b1) at (1,4){};
\node[bvertex](b2) at (2.5,3.5){};

\node[bvertex](b3) at (4.5,3.5){};
\node[bvertex](b5) at (3.5, 4.5){};

\node[ellipse, draw=lightblue, minimum width = 100, minimum height = 40](e) at (3.5,4){};
\node[ellipse, draw=nicered, minimum width = 60, minimum height = 40](e) at (1,0){};
\node[ellipse, draw=nicered, minimum width = 60, minimum height = 40](e) at (6,0){};
\node[](x) at (2.8,4.5){$w_3$};
\node[](x) at (1.8,3.5){$w_1$};
\node[](x) at (3.8,3.5){$w_2$};
\node[](x) at (-0.0,-0.52){$z_1'$};
\node[](x) at (2,-0.6){$z_1$};
\node[](x) at (1,-2){$Z_1$};
\node[](x) at (5,-0.6){$z_2$};
\node[](x) at (7,-0.52){$z_2'$};
\node[](x) at (6,-2){$Z_2$};

\draw[edge](r1) -- (b1);
\draw[tedge](r2) -- (b2);
\draw[tedge](r3) -- (b3);
\draw[edge](r4) to [bend right = 30] (b5);

\draw[tedge](r1) -- (r2);
\draw[edge](r3) -- (r4);

\draw[tedge](b2) -- (b5);
\draw[tedge](b3) -- (b5);

\end{scope}

\begin{scope}[scale = 0.5]

\node[rvertex,](r1) at (0,0){};
\node[rvertex, ](r2) at (2,0){};

\node[rvertex](r3) at (5,0){};
\node[rvertex](r4) at (7,0){};

\node[bvertex](b1) at (1,4){};
\node[bvertex](b2) at (2.5,3.5){};

\node[bvertex](b3) at (4.5,3.5){};
\node[bvertex](b4) at (6,4){};
\node[bvertex](b5) at (3.5, 4.5){};

\node[ellipse, draw=lightblue, minimum width = 100, minimum height = 40](e) at (3.5,4){};
\node[ellipse, draw=nicered, minimum width = 60, minimum height = 40](e) at (1,0){};
\node[ellipse, draw=nicered, minimum width = 60, minimum height = 40](e) at (6,0){};
\node[](x) at (2.8,4.5){$w_3$};
\node[](x) at (1.8,3.5){$w_1$};
\node[](x) at (5.2,3.5){$w_2$};
\node[](x) at (-0.0,-0.52){$z_1'$};
\node[](x) at (2,-0.6){$z_1$};
\node[](x) at (1,-2){$Z_1$};
\node[](x) at (5,-0.6){$z_2$};
\node[](x) at (7,-0.52){$z_2'$};
\node[](x) at (6,-2){$Z_2$};

\draw[edge](r1) -- (b1);
\draw[tedge](r2) -- (b2);
\draw[tedge](r3) -- (b3);
\draw[edge](r4) -- (b4);

\draw[tedge](r1) -- (r2);
\draw[tedge](r3) -- (r4);

\draw[tedge](b2) -- (b3);

\end{scope}
\end{tikzpicture}
 \caption{The situation in Claim~\ref{c-one} where two connected components $Z_1, Z_2$ of $G[Z]$, each with at least two vertices, are both coloured red. This will always yield an induced path on at least six vertices, even if $w_1$ and $w_2$ are not adjacent, as at most one of $z_1',z_2'$ is adjacent to $w_3$.}\label{fig-inducedpath}
 \end{center}
 \end{figure}
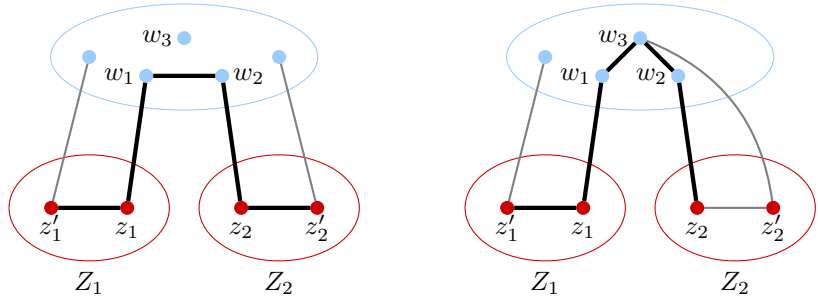
 
\begin{claim1}\label{c-one}
In any valid red-blue $(S,T,X,Y)$-colouring at most one red component of~$G[Z]$ may have more than one vertex.
\end{claim1}

\begin{claimproof}
Let $c$ be a valid red-blue $(S,T,X,Y)$-colouring of $G$.
For a contradiction, assume that $Z_1$ and $Z_2$ are connected components of size at least~$2$ that are both coloured red. For $i=1,2$, let $z_i$ and $z_i'$ be two adjacent vertices in~$Z_i$, and let $w_i$ be the blue neighbour of $z_i$ in~$F$ (which exists due to Claim~\ref{c-blue}).
As $c$ is valid, no blue vertex of $G$ has two red neighbours. Hence, we find that $w_1$ and $w_2$ are distinct vertices, and also that $w_1$ is not adjacent to any vertex of $\{z_1',z_2,z_2'\}$, and $w_2$ is not adjacent to any vertex of $\{z_1,z_1',z_2'\}$.
Hence, if $w_1$ and $w_2$ are adjacent, then $z_1'z_1w_1w_2z_2z_2'$ is an induced $P_6$;
see also Figure~\ref{fig-inducedpath} (left side).
As $G$ is $P_6$-free, this is not possible. Hence, $w_1$ and $w_2$ are not adjacent.

We now use the fact that $w_1$ and $w_2$ both belong to $F$ and that $F$ is a complete bipartite graph. As $w_1w_2\notin E$, the latter means that there exists a vertex $w_3\in V(F)$ that is adjacent to both $w_1$ and $w_2$, so $w_3$ is blue as well. As $z_1'$ and $z_2'$ are both coloured red, at most one of $z_1',z_2'$ can be adjacent to $w_3$. Hence, we may assume without loss of generality that $w_3$ is not adjacent to $z_1'$. As $z_1$ and $z_2$ have $w_1$ and $w_2$, respectively, as their matching partner, $w_3$ is adjacent neither to $z_1$ nor to $z_2$. Now, $z_1'z_1w_1w_3w_2z_2$ is an induced $P_6$, a contradiction.
See also Figure~\ref{fig-inducedpath} (right side).
\end{claimproof}

\noindent
Due to Claim~\ref{c-one}, we can now branch over all $O(n)$ options to colour at most one connected component of $G[Z]$ of size at least~$2$ red, and all other components of size at least~$2$ blue. 
We then exhaustively apply rules R1-R3 again. This takes polynomial time. In essence, we merely pre-coloured some more vertices red. So, in the end we either find a new tuple of $G$ with the same properties as those listed in Lemma~\ref{l-ll2}, or we find that $G$ has no such tuple, in which case we discard the branch. Suppose we have not discarded the branch. Now the set of uncoloured vertices form an independent set. Hence, we can apply Lemma~\ref{lem-plesnikMC} to find in polynomial time a red-blue colouring of $G$ that is a maximum red-blue $(S^*,T^*,S'',T'')$-colouring due to Lemmas~\ref{l-ll1}-(ii) and~\ref{l-ll2}-(ii).

If somewhere in the above process we discarded a branch, that is, if $G$ has no valid red-blue $(S^*,T^*,S'',T'')$-colouring, we consider the next one. If we did not discard the branch, then we remember the value of the maximum red-blue $(S^*,T^*,S'',T'')$-colouring that we found. Afterwards, we pick one with the largest value to obtain a maximum valid red-blue colouring of $G$. 
 
The correctness of our branching algorithm follows from its description. The running time is polynomial: each branch takes polynomial time to process, and the number of branches is $O(n^3)$.
This completes our proof.
\end{proof}

\noindent
The proof of our second polynomial-time result combines Lemma~\ref{lem-plesnikMC} with arguments used in the proof that {\sc Matching Cut} is polynomial-time solvable for $(H+P_3)$-free graphs if it is so for $H$-free graphs~\cite{LPR22}.

\begin{theorem}\label{theo-HP2}
Let $H$ be a graph. If \mmc\ is polynomial-time solvable for $H$-free graphs, then it is so for $(H+P_2)$-free graphs.
\end{theorem}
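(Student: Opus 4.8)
The plan is to mimic the structure of the proof of Theorem~\ref{theo-P6}: reduce the problem to finding a maximum valid red-blue colouring (by Observation~\ref{o}), push as much structural information as possible into a starting pair / intermediate tuple, and then invoke Lemma~\ref{lem-plesnikMC} once the uncoloured set has been whittled down to an independent set. Let $G=(V,E)$ be a connected $(H+P_2)$-free graph. If $G$ itself is $H$-free, we are done by hypothesis. Otherwise $G$ contains an induced copy $H_0$ of $H$. Since $G$ is $(H+P_2)$-free, there cannot be an induced $P_2$ (i.e.\ an edge) disjoint from and anticomplete to $V(H_0)$; equivalently, every edge of $G$ has an end-vertex in $N[V(H_0)]$. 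So the set $A := N[V(H_0)]$ is ``almost dominating'' in the strong sense that $V\setminus A$ is an independent set.

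First I would branch over all ways to produce a valid starting pair involving $A$. Concretely: since $|V(H_0)| = |V(H)|$ is a constant, branch over all $O(2^{|V(H)|})$ red-blue colourings of $V(H_0)$; for every vertex of $V(H_0)$ coloured red with no coloured red--blue conflict forcing its neighbourhood, branch over the $O(n)$ choices of at most one neighbour getting the opposite colour (and likewise for blue vertices), as in Lemma~\ref{l-dom}; this commits each vertex of $A$ to a colour except that the vertices of $N(V(H_0))\setminus V(H_0)$ not yet forced need to be handled. To make the number of branches polynomial we should only guess colours for $V(H_0)$ plus, for each of the constantly many vertices of $H_0$, at most one ``interface'' neighbour; the remaining vertices of $A$ then have both of their (constantly many, since they are adjacent to $V(H_0)$) potential forcing neighbours coloured, so rules R1--R2 determine their colours or report {\tt no}. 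If no red and blue vertex end up adjacent, add one more $O(n^2)$-branch choosing a bichromatic edge to obtain a nonempty core, exactly as in the proof of Theorem~\ref{theo-P6}. This gives, in each branch, a starting pair $(S'',T'')$ with $A\subseteq X\cup Y$ after exhaustive application of R1--R2, and an intermediate tuple $(S',T',X',Y')$.

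Next I would argue monochromaticity. Set $Z' = V\setminus(X'\cup Y')$. Since $V\setminus A$ is independent and $A\subseteq X'\cup Y'$, the set $Z'$ is independent. Hence every connected component of $G[Z']$ is a single vertex, which is trivially monochromatic, so $(S',T',X',Y')$ is a monochromatic intermediate tuple. Applying rules R1--R3 exhaustively (Lemma~\ref{l-ll2}) then yields a final tuple $(S,T,X,Y)$ with $Z = V\setminus(X\cup Y)\subseteq Z'$ still independent. Now Lemma~\ref{lem-plesnikMC} applies directly and produces in polynomial time a maximum valid red-blue $(S^*,T^*,S'',T'')$-colouring of $G$, or concludes none exists. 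Over all branches we keep the colouring of largest value; by Lemmas~\ref{l-ll1}-(ii) and~\ref{l-ll2}-(ii) this is a maximum valid red-blue colouring of $G$. The total number of branches is $O(2^{|V(H)|}\cdot n^{c})$ for a constant $c$ depending only on $|V(H)|$, and each branch runs in polynomial time, so the whole algorithm is polynomial.

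The main obstacle, and the place requiring care, is the branching bookkeeping: making sure that after a \emph{polynomial} (not exponential in $n$) number of guesses, every vertex of $A=N[V(H_0)]$ is either coloured or has all of its constantly many forcing-relevant neighbours coloured, so that R1--R2 pin it down. The delicate point is vertices in $N(V(H_0))$ that are adjacent to exactly one vertex of $V(H_0)$: such a vertex need not be forced by a single coloured neighbour, so one must either absorb it into the ``at most one opposite-coloured neighbour'' guesses attached to each vertex of $H_0$, or observe (using $(H+P_2)$-freeness again, applied to an edge incident to such a vertex) that any edge inside $G[Z']$ would be anticomplete to some induced $H$ elsewhere — but since all of $V(H_0)$ is already coloured this reduces to the independence argument above. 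I expect the clean way is: guess the colouring of $V(H_0)$ and one interface-neighbour per vertex of $H_0$, apply R1--R2, and then prove $Z'$ is independent directly from $(H+P_2)$-freeness (an edge in $G[Z']$ together with $H_0$ would be an induced $H+P_2$, since $V(H_0)\subseteq X'\cup Y'$ and R2 would have pulled in any $Z'$-vertex adjacent to $V(H_0)$). This last observation is what makes the whole argument go through without a radius-$2$ subroutine.
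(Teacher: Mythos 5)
Your overall route is the paper's: find an induced copy $H_0$ of $H$, observe that $(H+P_2)$-freeness makes $V\setminus N[V(H_0)]$ independent, branch over colourings of $V(H_0)$ and interface neighbours, propagate with R1--R2, and finish with Lemma~\ref{lem-plesnikMC}. The one place you go astray is your preferred resolution of the ``delicate point''. Your ``clean way'' rests on the assertion that R2 pulls into $X'\cup Y'$ every $Z'$-vertex adjacent to $V(H_0)$; this is false. If $v$ is adjacent to exactly one coloured vertex $u\in V(H_0)$ and $u$ lies in $X'\setminus S'$ (say, $u$ is red with no designated blue neighbour), then R2 does not apply to $v$: it requires a neighbour in $S'$ or two neighbours in $X'\setminus S'$. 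So $v$ may remain uncoloured, an edge of $G[Z']$ need not be anticomplete to $V(H_0)$, the induced $H+P_2$ you want does not materialise, and $Z'$ need not be independent under that branching scheme. (The same error appears earlier where you claim that the remaining vertices of $A$ are ``determined'' by R1--R2 once they have a coloured neighbour.) The correct fix is the first alternative you mention, which is exactly what the paper does: following the convention of Lemma~\ref{l-dom}, when you guess that $u\in V(H_0)$ has at most one neighbour of the opposite colour, you simultaneously colour \emph{all} remaining neighbours of $u$ with the colour of $u$. Then the starting pair already covers $N[V(H_0)]$, so the uncoloured set is contained in the independent set $V\setminus N[V(H_0)]$ from the outset, the intermediate tuple is trivially monochromatic and already final, and Lemma~\ref{lem-plesnikMC} applies. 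With that reading, your branch count $O(2^{|V(H)|}n^{|V(H)|})$ (plus the $O(n^2)$ bichromatic-edge guess) and the rest of your argument match the paper and are correct.
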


\begin{proof}
Assume that \mmc\ is polynomial-time solvable for $H$-free graphs. Let $G=(V,E)$ be a connected $(H+P_2)$-free graph on $n$ vertices. If $G$ is $H$-free, we are done by assumption. Suppose $G$ 
has an induced subgraph $G'$ isomorphic to $H$. Let $G^*$ be the graph obtained from $G$ after removing the vertices of $V(G') \cup N(V(G'))$. Since $G'$ is isomorphic to $H$ and $G$ is $(H+P_2)$-free, $G^*$ is $P_2$-free. Hence, $V(G^*)$ is an independent set. By Observation~\ref{o} it suffices to find a maximum valid red-blue colouring of $G$. Below we explain how to do this.

We first branch over all options of colouring every $u\in V(G')$ red or blue, and colouring at most one neighbour of every $u\in V(G')$ with a different colour than $u$. If in a branch we only used one colour, we branch over all $O(n^2)$ options of choosing a bichromatic edge. In this way we obtain, for each branch, a starting pair with a non-empty core.

Consider a branch with a starting pair $(S'',T'')$ and core $(S^*,T^*)$.
We apply rules R1 and R2 exhaustively. If we obtain a no-answer, we may discard the branch due to Lemma~\ref{l-ll1}. Else, we obtain an intermediate tuple $(S,T,X,Y)$. Note that every vertex in $Z=V\setminus (X\cup Y)$ belongs to $G^*$. Hence, $Z$ is an independent set, and thus $(S,T,X,Y)$ is a final tuple. This means that we may apply Lemma~\ref{lem-plesnikMC}. Then, in polynomial time, we either find that $G$ has no valid red-blue $(S,T,X,Y)$-colouring, in which case we may discard the branch due to Lemma~\ref{l-ll1}, or we find a maximum valid red-blue $(S,T,X,Y)$-colouring. The latter is also a maximum valid red-blue $(S^*,T^*,S'',T'')$-colouring, again due to Lemma~\ref{l-ll1}. We remember its value. In the end, after the last branch, we output a colouring with largest value as a maximum valid red-blue colouring of $G$.

The correctness of our branching algorithm follows from its description. The running time is polynomial: each branch takes polynomial time to process, and the number of branches is 
$O(2^{|V(H)|}n^{|V(H)|})+O(n^2)$. This completes our proof.
\end{proof}

\noindent
We now show our third polynomial-time result. Again, the idea is to branch over a polynomial number of options, each of which reduces to the setting where we can apply Lemma~\ref{lem-plesnikMC}.

\begin{theorem}\label{theo-diam}
\mmc \ is solvable in polynomial time for graphs with diameter at most~$2$.
\end{theorem}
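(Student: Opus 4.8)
The plan is to mimic the structure of the proof of Theorem~\ref{theo-P6}: translate to finding a maximum valid red-blue colouring via Observation~\ref{o}, produce a bounded collection of starting pairs by guessing, propagate via rules R1--R3, and finish with Lemma~\ref{lem-plesnikMC}. The key structural fact about diameter-$2$ graphs that I would exploit is the following: if $G$ has diameter at most~$2$ and $c$ is a valid red-blue colouring, then (as is known from the {\sc Matching Cut} literature on diameter~$2$) the red interface $R'$ together with the blue interface $B'$ is ``almost dominating'', and in particular one side of the cut is small or highly structured. More concretely, I would first handle the trivial/degenerate cases: if $G$ has a dominating vertex or a dominating edge, then the domination number is at most~$2$ and we are done by Lemma~\ref{l-dom}. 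Otherwise, I would argue that in any valid red-blue colouring, either the red set $R$ or the blue set $B$ has a bounded-size dominating subset coming from the interface, which lets us guess it in polynomial time.

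The main step is the guessing phase. Here is the approach I expect to work: fix a hypothetical maximum valid red-blue colouring with red set $R$, blue set $B$, red interface $R'$ and blue interface $B'$. Pick a vertex $r \in R'$ with its blue neighbour $b \in B'$; this gives the non-empty core of a starting pair. Since $G$ has diameter~$2$, every vertex of $B$ is within distance~$2$ of $r$, and since $r$ has only the one blue neighbour $b$, every vertex of $B \setminus \{b\}$ that is within distance~$2$ of $r$ must have a neighbour in $N(r) \cap R$ — but each such red vertex of $N(r)$ has at most one blue neighbour. So $|B|$ is bounded in terms of $|N(r)|$ only through the interface structure; more usefully, $B' \setminus \{b\}$ dominates $B$ in a controlled way. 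The cleanest version: I would branch over the choice of the edge $rb$ ($O(n^2)$ choices), then over a small bounded number of further vertices needed to ``anchor'' the blue side, exploiting that after colouring $r$ red and $b$ blue, rules R1 and R2 will propagate aggressively because of diameter~$2$ — any uncoloured vertex adjacent to $r$ is forced red, and any uncoloured vertex at distance~$2$ from $r$ has a red or blue neighbour among the forced vertices. The goal is to reach an intermediate tuple $(S',T',X',Y')$ where the uncoloured set $Z = V \setminus (X' \cup Y')$ is small, or at least where $(S',T',X',Y')$ is monochromatic so that R3 applies and we can branch the few remaining nontrivial components down to an independent set, exactly as in the proof of Theorem~\ref{theo-P6}.

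Once $Z$ has been reduced to an independent set (after a polynomial number of branches, each processed in polynomial time via R1--R3), I would invoke Lemma~\ref{lem-plesnikMC} to compute, for that branch, a maximum valid red-blue $(S,T,X,Y)$-colouring or to conclude none exists. By Lemmas~\ref{l-ll1}-(ii) and~\ref{l-ll2}-(ii), this is a maximum valid red-blue $(S^*,T^*,S'',T'')$-colouring for the starting pair of that branch. Taking the best value over all branches yields a maximum valid red-blue colouring of $G$, and hence by Observation~\ref{o} a maximum matching cut. The running time is polynomial because the number of branches is polynomial (a polynomial number of edge/vertex choices times the branching over bounded-size red components as in Theorem~\ref{theo-P6}) and each branch is handled in polynomial time by the propagation rules and Theorem~\ref{theo-saturatingMatchings}.

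The main obstacle I anticipate is making the ``diameter~$2$ forces the uncoloured set to be essentially independent'' argument precise. It is not immediately true that $Z$ is independent after propagation: there could be an edge inside $Z$ both of whose endpoints avoid $N(r)$. The resolution should be a claim analogous to Claim~\ref{c-one}: show that after fixing the edge $rb$ and propagating, any connected component of $G[Z]$ of size at least~$2$ is severely constrained (e.g.\ it must be monochromatic and there can be at most a bounded number of such non-singleton components in any valid colouring), using that every such component lies within distance~$2$ of $r$ and that $r$ and its forced red neighbours each have at most one blue neighbour. If that bound holds, we branch over the colours of these finitely many components and land in the independent-set case. Nailing down the exact counting in that claim — and checking that the monochromaticity hypothesis needed for Lemma~\ref{lem-plesnikMC} is met — is where the real work lies; everything else is bookkeeping that parallels the $P_6$-free proof.
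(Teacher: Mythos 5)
Your skeleton is the same as the paper's: branch over the $O(n^2)$ choices of a bichromatic edge, observe that colouring its blue endpoint $u$ forces all of $N(u)$ except the red endpoint $v$ to be blue so that $D=\{u\}\cup N(u)$ is a fully coloured dominating set, propagate with R1--R3, reduce the uncoloured set $Z$ to an independent set by branching on the non-singleton components of $G[Z]$, and finish with Lemma~\ref{lem-plesnikMC}. However, the two steps you explicitly defer --- monochromaticity and the bound on the number of non-singleton components of $G[Z]$ --- are precisely the content of the paper's proof, and neither is routine bookkeeping. Monochromaticity (the paper's Claims~\ref{c-blue2} and~\ref{claim-monochrom2}) comes from the \emph{blue} side, not the red side your sketch emphasizes: since $D$ dominates $G$ and every vertex of $D$ other than $v$ is blue, every uncoloured vertex has a neighbour in $Y'\setminus T'$, so a bichromatic edge inside $G[Z']$ would give its red endpoint two blue neighbours. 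Your discussion is centred on neighbours of the red endpoint, which by itself does not yield this; also, the opening detour (dominating vertex/edge, ``one side of the cut is small'', $|B|$ controlled via $|N(r)|$) is neither needed nor correct as stated and plays no role in the actual argument.

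The more serious omission is the counting claim. The paper first strengthens the structure of $Z$ (Claim~\ref{c-stronger}): every $w\in Z$ has \emph{exactly} one neighbour in $X\setminus S$, since otherwise $w$ would be at distance at least~$3$ from $v$. Combined with the blue-neighbour claim, every uncoloured vertex has exactly one red and exactly one blue neighbour outside the interface, and two vertices in the same component of $G[Z]$ share no coloured neighbour. Only with this in hand does diameter~$2$ pin things down: if $F_1$ and $F_2$ are two non-singleton components, the pattern of common coloured neighbours of their vertices is forced up to symmetry, and a vertex of any third component would have no common neighbour with one of the four vertices $u_1,u_2,v_1,v_2$, contradicting diameter~$2$ (Claim~\ref{claim-diam3}). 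Hence $G[Z]$ has at most two non-singleton components, which are monochromatic, giving at most four further branches before the independent-set case. Your proposal asserts that ``there can be at most a bounded number of such non-singleton components'' should hold but supplies no argument for it, and it is not automatic: without the ``exactly one red and exactly one blue neighbour'' strengthening the case analysis does not go through. Since the reduction to Lemma~\ref{lem-plesnikMC} hinges entirely on these two claims, the proof as written is incomplete at its central step.
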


\begin{proof}
Let $G=(V,E)$ be a graph of diameter at most~$2$. If $G$ has diameter~$1$, then the problem is trivial to solve. Assume that $G$ has diameter~$2$.
 By Observation~\ref{o} it suffices to find a maximum valid red-blue colouring of $G$.
By definition, such a colouring has at least one bichromatic edge (has value at least~$1$). We branch over all $O(n^2)$ options of choosing the bichromatic edge. 

Consider a branch, where $e=uv$ is the bichromatic edge, say $u$ is blue and $v$ is red. Now all other neighbours of $u$ must be coloured blue.
We let $D=\{u\}\cup N(u)$ and note that $D$ dominates $G$, as $G$ has diameter~$2$. 

We set $S^*=\{v\}$, $T^*=\{u\}$, $S''=\{v\}$, and $T''=D\setminus \{v\}$.
This gives us a starting pair $(S'',T'')$ with core $(S^*,T^*)$.
We exhaustively apply rules R1 and R2 on $(S'',T'')$. By Lemma~\ref{l-ll1} we either find in polynomial time that $G$ has no valid red-blue $(S^*,T^*,S'',T'')$-colouring, and we discard the branch, or 
we obtain an intermediate tuple $(S',T',X',Y')$ of $G$. Suppose the latter case holds. We prove the following two claims for the set $Z' = V\setminus (X' \cup Y')$ of uncoloured vertices. 

\begin{claim1}\label{c-blue2}
Every vertex $z \in Z'$ has a neighbour in $Y'\setminus T'$ that belongs to $D$.
\end{claim1}

\begin{claimproof}
As $z\in Z'$, we have that $z\notin D$.
As $D$ is dominating, $z$ has a neighbour~$b$ in $D$. As every neighbour of $u$ belongs to $D$ and $z$ is not in $D$, we find that $b\neq u$.
Since $D$ contains exactly one red vertex~$v$, which has a blue neighbour in $D$ (namely $u$), all neighbours of $v$ in $G-D$ are coloured red, that is, belong to $X$.
As $z$ belongs to $G-D$ and $z$ is not coloured red, this means that  $v$ and $z$ are non-adjacent, and thus $b\neq v$. So, $b$ must belong to $T''\setminus \{u\}$, and thus to $Y'\setminus T'$, as else we could have applied R2.
\end{claimproof}

\begin{claim1}\label{claim-monochrom2}
The intermediate tuple $(S',T',X',Y')$ is monochromatic. 
\end{claim1}

\begin{claimproof}
Suppose for a contradiction that there is an edge $pq\in E(G[Z'])$ such that $p$ is blue and $q$ is red. Then $q$ has two blue neighbours by Claim~\ref{c-blue2}, a contradiction.
\end{claimproof}

 \noindent
Since Claim~\ref{claim-monochrom2} holds, we may exhaustively apply R1--R3 to the intermediate tuple $(S',T',X',Y')$. By Lemma~\ref{l-ll2} we either find in polynomial time that $G$ has no valid red-blue $(S',T',X',Y')$-colouring, and thus no valid red-blue $(S^*,T^*,S',T')$-colouring, and we discard the branch, or we obtain a final tuple $(S,T,X,Y)$ of $G$. Again, we let $Z=V\setminus (X\cup Y)$. By the same lemma and Claim~\ref{c-blue2}, 
the following holds for every (uncoloured) vertex $w\in Z$:
\begin{itemize}
\item [(i)] $w$ has at most one neighbour in $X\setminus S$,
\item [(ii)] $w$ has exactly one neighbour in $Y\setminus T$, which belongs to $D$, and
\item [(iii)] if $w'$ is in the same connected component of $G[Z]$ as $w$, then $w$ and $w'$ do not share a neighbour in $G-Z$.
\end{itemize}

\noindent
We strengthen (i) by proving the following claim.

\begin{claim1}\label{c-stronger}
Every vertex~$w\in Z$ has exactly one neighbour in $X\setminus S$.
\end{claim1}

\begin{claimproof}
By (i), we find that $w$ has at most one neighbour in $X\setminus S$. For a contradiction, suppose that $w$ has no neighbours in $X\setminus S$. We also know that $w$ has no neighbours in $S$, as else we could have applied R1 or R2. Recall that $v$ was the only red vertex of $D$. As $v$ has a blue neighbour, namely $u$, all the other neighbours of $v$ are coloured red due to R2. Hence, $w$ is adjacent neither to $v$ nor to any vertex in $N(v)\setminus \{u\}$. As all neighbours of $u$ that are not equal to $v$ are coloured blue and $w\in Z$ is uncoloured, we find that $w$ is not adjacent to~$u$ either. Hence, the distance between $v$ and $w$ is at least~$3$, contradicting our assumption that $G$ has diameter~$2$. 
\end{claimproof}

\noindent
We continue by proving the following claim.
 
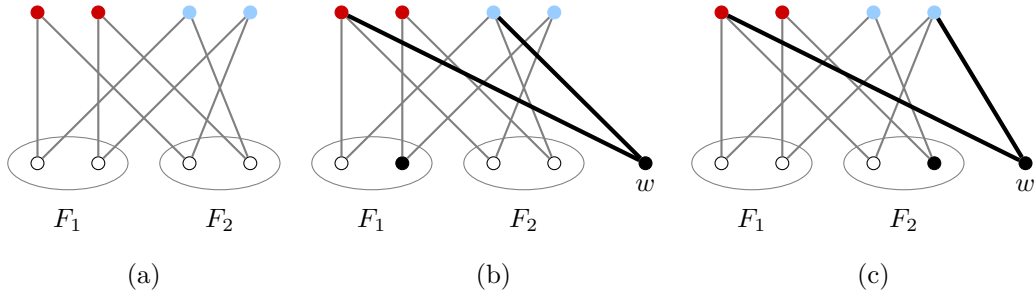
\begin{figure}
\begin{center}
\begin{tikzpicture}

\begin{scope}[xscale = 0.8]

\node[evertex](x1) at (0,0){};
\node[evertex](x2) at (1,0){};
\node[evertex](x3) at (2.5,0){};
\node[evertex](x4) at (3.5,0){};

\node[rvertex](x5) at (0,2){};
\node[rvertex](x6) at (1,2){};
\node[bvertex](x7) at (2.5,2){};
\node[bvertex](x8) at (3.5,2){};

\draw[edge] (x1)--(x5);
\draw[edge] (x1)--(x7);
\draw[edge] (x2)--(x6);
\draw[edge] (x2)--(x8);

\draw[edge] (x3)--(x5);
\draw[edge] (x3)--(x8);
\draw[edge] (x4)--(x6);
\draw[edge] (x4)--(x7);

\node[ellipse, draw=gray, minimum width = 45, minimum height = 20](e) at (0.5,0){};
\node[ellipse, draw=gray, minimum width = 45, minimum height = 20](e) at (3,0){};

\node[](x) at (0.5,-0.75){$F_1$};
\node[](x) at (3,-0.75){$F_2$};
\node[](x) at (1.75,-1.5){(a)};
\end{scope}

\begin{scope}[shift = {(4,0)}, xscale = 0.8]

\node[evertex](x1) at (0,0){};
\node[vertex](x2) at (1,0){};
\node[evertex](x3) at (2.5,0){};
\node[evertex](x4) at (3.5,0){};

\node[rvertex](x5) at (0,2){};
\node[rvertex](x6) at (1,2){};
\node[bvertex](x7) at (2.5,2){};
\node[bvertex](x8) at (3.5,2){};

\node[vertex, label=below:$w$](x9) at (5,0){};

\draw[edge] (x1)--(x5);
\draw[edge] (x1)--(x7);
\draw[edge] (x2)--(x6);
\draw[edge] (x2)--(x8);

\draw[edge] (x3)--(x5);
\draw[edge] (x3)--(x8);
\draw[edge] (x4)--(x6);
\draw[edge] (x4)--(x7);

\draw[tedge] (x9)--(x5);
\draw[tedge] (x9)--(x7);

\node[ellipse, draw=gray, minimum width = 45, minimum height = 20](e) at (0.5,0){};
\node[ellipse, draw=gray, minimum width = 45, minimum height = 20](e) at (3,0){};
\node[](x) at (0.5,-0.75){$F_1$};
\node[](x) at (3,-0.75){$F_2$};
\node[](x) at (2.5,-1.5){(b)};

\end{scope}

\begin{scope}[shift = {(9,0)}, xscale = 0.8]

\node[evertex](x1) at (0,0){};
\node[evertex](x2) at (1,0){};
\node[evertex](x3) at (2.5,0){};
\node[vertex](x4) at (3.5,0){};

\node[rvertex](x5) at (0,2){};
\node[rvertex](x6) at (1,2){};
\node[bvertex](x7) at (2.5,2){};
\node[bvertex](x8) at (3.5,2){};

\node[vertex, label=below:$w$](x9) at (5,0){};

\draw[edge] (x1)--(x5);
\draw[edge] (x1)--(x7);
\draw[edge] (x2)--(x6);
\draw[edge] (x2)--(x8);

\draw[edge] (x3)--(x5);
\draw[edge] (x3)--(x8);
\draw[edge] (x4)--(x6);
\draw[edge] (x4)--(x7);

\draw[tedge] (x9)--(x5);
\draw[tedge] (x9)--(x8);

\node[ellipse, draw=gray, minimum width = 45, minimum height = 20](e) at (0.5,0){};
\node[ellipse, draw=gray, minimum width = 45, minimum height = 20](e) at (3,0){};
\node[](x) at (0.5,-0.75){$F_1$};
\node[](x) at (3,-0.75){$F_2$};
\node[](x) at (2.5,-1.5){(c)};

\end{scope}
\end{tikzpicture}
 \caption{The unique way (up to symmetry) to connect two components $F_1$ and $F_2$ of $G[Z]$ of size 2 (a) and the two options to connect a vertex $w$ in a third component $F_3$ to the coloured part of the graph (b) and (c). We can see that there is always an uncoloured vertex without a common neighbour with $w$. }\label{fig-diameter2}
 \end{center}
 \end{figure}
 
 \begin{claim1}\label{claim-diam3}
If $G[Z]$ contains two connected components $F_1$ and $F_2$ of size at least~$2$, then $G[Z]=F_1+F_2$.
 \end{claim1}
 
 \begin{claimproof}
 Let $F_1$ contain $u_1$ and $u_2$. Let $F_2$ contain $v_1$ and $v_2$. By combining Claim~\ref{c-stronger} with (ii) and (iii), we find that the vertices $u_1$ and $u_2$ have each a different red (respectively, blue) neighbour and the same holds for $v_1$ and $v_2$.
However, as $G$ has diameter~$2$, it holds that $u_1$ and $u_2$ each have a common neighbour with both $v_1$ and $v_2$. Thus, without loss of generality, $u_1$ and $v_1$ have a red common neighbour, $u_1$ and $v_2$ a blue one, while $u_2$ and $v_1$ have a blue common neighbour, $u_2$ and $v_2$ a red one. See also Figure~\ref{fig-diameter2} (a). 
 
For a contradiction, assume that $G[Z]$ contains a third connected component $F_3$. Let $w$ be a vertex in $F_3$. Then $w$ has a common neighbour with each of $u_1,u_2,v_1$ and $v_2$. Furthermore, $w$ has exactly one red and one blue neighbour. As can be seen in Figures~\ref{fig-diameter2}~(b) and~(c), there do not exist vertices $x \in X$ and $y \in Y$ such that $\{u_1,u_2,v_1, v_2\} \subseteq N_G(\set{x,y})$. Hence, $w$ has no common neighbour with some vertex of $\{u_1,u_2,v_1,v_2\}$, contradicting our assumption that $G$ has diameter~$2$.
 \end{claimproof}
 
 \noindent
 From Claim~\ref{claim-diam3}, it follows that $G[Z]$ has at most two components with more than one vertex, which are both monochromatic in every valid red-blue $(S,T,X,Y)$-colouring of $G$ (if such a colouring exists) due to Claim~\ref{claim-monochrom2}. Hence, we can branch over all possible colourings of these connected components (there are at most four branches). 
 
 For each branch, we propagate the obtained partial red-blue colouring by exhaustively applying rules R1--R3. 
 This takes polynomial time. In essence, we merely pre-coloured some more vertices red or blue. So, in the end we either find a new tuple of $G$ with the same properties as those listed in Lemma~\ref{l-ll2}, or we find that $G$ has no such tuple, in which case we discard the branch. Suppose we have not discarded the branch. Now the set of uncoloured vertices form an independent set. Hence, we can apply Lemma~\ref{lem-plesnikMC} to find in polynomial time a red-blue colouring of $G$ that is a maximum red-blue $(S^*,T^*,S'',T'')$-colouring due to Lemmas~\ref{l-ll1}-(ii) and~\ref{l-ll2}-(ii).

If somewhere in the above process we discarded a branch, that is, if $G$ has no valid red-blue $(S^*,T^*,S'',T'')$-colouring, we consider the next one. If we did not discard the branch, then we remember the value of the maximum red-blue $(S^*,T^*,S'',T'')$-colouring that we found. Afterwards, we pick one with the largest value to obtain a maximum valid red-blue colouring of $G$. 
 
The correctness of our branching algorithm follows from its description. The running time is polynomial: each branch takes polynomial time to process, and the number of branches is $O(n^2)$.
This completes our proof.
\end{proof}

\section{Hardness Results for Maximum Matching Cut}\label{s-hard}

In the following we will prove that \mmc\ is \NP-hard for subcubic line graphs and $2P_3$-free quadrangulated graphs of diameter~$3$ and radius~$2$.
To prove the first hardness result, we reduce from {\sc Maximum Cut}. The problem takes as input a graph $G$ and an integer $k$. The question is whether $G$ has an edge cut of size at least $k$. This problem is well known to be \NP-complete even for subcubic graphs, as shown by Yannakakis~\cite{Ya78}.

  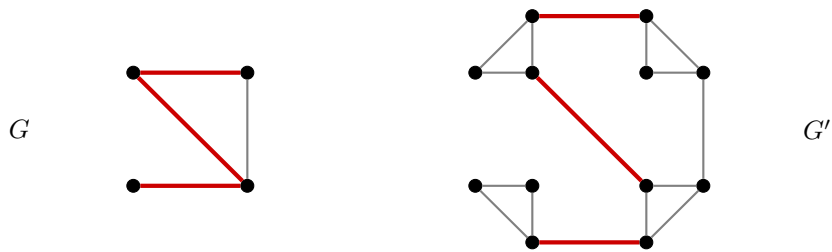
\begin{figure}[h]
  \vspace*{5mm}
 \begin{center}
 \begin{tikzpicture}[scale=1.5]
\node[] (g) at (-1,0.5){$G$};

\node[vertex] (v1) at (0,0){};
\node[vertex] (v2) at (1,0){};
\node[vertex] (v3) at (0,1){};
\node[vertex] (v4) at (1,1){};

\draw[tredge] (v1)--(v2);
\draw[tredge] (v3)--(v2);
\draw[edge] (v4)--(v2);
\draw[tredge] (v3)--(v4);

\begin{scope}[shift = {(3,-0.5)}, scale = 0.5]
\node[] (g) at (6,2){$G'$};

\node[vertex] (v11) at (1,0){};
\node[vertex] (v12) at (1,1){};
\node[vertex] (v13) at (0,1){};

\node[vertex] (v21) at (3,0){};
\node[vertex] (v22) at (3,1){};
\node[vertex] (v23) at (4,1){};

\node[vertex] (v31) at (0,3){};
\node[vertex] (v32) at (1,3){};
\node[vertex] (v33) at (1,4){};

\node[vertex] (v41) at (4,3){};
\node[vertex] (v42) at (3,3){};
\node[vertex] (v43) at (3,4){};

\draw[edge] (v11)--(v12);
\draw[edge] (v12)--(v13);
\draw[edge] (v13)--(v11);

\draw[edge] (v21)--(v22);
\draw[edge] (v22)--(v23);
\draw[edge] (v23)--(v21);

\draw[edge] (v31)--(v32);
\draw[edge] (v32)--(v33);
\draw[edge] (v33)--(v31);

\draw[edge] (v41)--(v42);
\draw[edge] (v42)--(v43);
\draw[edge] (v43)--(v41);

\draw[tredge] (v11)--(v21);
\draw[tredge] (v22)--(v32);
\draw[edge] (v23)--(v41);
\draw[tredge] (v43)--(v33);

\end{scope}
\end{tikzpicture}
 \caption{A graph $G$ (left) where the tick red edges form a maximum edge cut, and the graph~$G'$ (right) from the proof of Theorem~\ref{theo-clawfree}, where the thick red edges form a maximum matching cut.}\label{fig-clawfree}
 \end{center}
 \end{figure}
 
 \begin{theorem}\label{theo-clawfree}
\mmc \ is \NP-hard for subcubic line graphs of triangle-free graphs.
\end{theorem}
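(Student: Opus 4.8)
The plan is to reduce from \textsc{Maximum Cut} on subcubic graphs. Given an instance $(G,k)$ of \textsc{Maximum Cut} with $G$ subcubic, I would first replace each vertex $v$ of $G$ by a triangle $T_v$ on vertices labelled by the (at most three) edges incident with $v$; this is a standard construction that realizes $L(G)$ when $G$ is a disjoint union of triangles in exactly the right way. More precisely, for each edge $e=uv$ of $G$, the vertex ``$e$'' appears in both $T_u$ and $T_v$, and we add the edge between the copy of $e$ in $T_u$ and the copy of $e$ in $T_v$; call the resulting graph $G'$. Since $G$ is subcubic, $G'$ has maximum degree at most $3$; moreover $G'$ is a line graph (of the graph obtained from $G$ by subdividing each edge once, or equivalently one checks $G'=L(G^{\mathrm{sd}})$ for a suitable triangle-free $G^{\mathrm{sd}}$), hence $(K_{1,3},\text{diamond})$-free. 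I would verify that $G^{\mathrm{sd}}$ is triangle-free, which holds because the only triangles one could fear come from multigraph artefacts that do not arise here.

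The key correspondence to prove is: $G$ has an edge cut of size at least $k$ if and only if $G'$ has a matching cut of size at least $k$. By Observation~\ref{lem-cliques-monochrom}, every triangle $T_v$ is monochromatic in any valid red-blue colouring of $G'$, so a valid red-blue colouring of $G'$ induces a $2$-colouring of the vertices of $G$ (colour $v$ by the colour of $T_v$). Under this translation, the bichromatic edges of $G'$ are exactly the ``edge-gadget'' edges between $T_u$ and $T_v$ for which $u$ and $v$ receive different colours, i.e.\ exactly the edges of $G$ crossing the cut; so the value of the colouring equals the size of the corresponding cut of $G$. Conversely, any $2$-colouring of $V(G)$ with at least one crossing edge gives a valid red-blue colouring of $G'$ of the same value: each inter-triangle edge is a matching edge automatically (its two endpoints lie in distinct triangles, so it is the unique edge joining the two colour classes at each of its endpoints provided the triangles are monochromatic), and intra-triangle edges are monochromatic. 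One subtlety to handle: a matching cut of $G'$ must be non-empty and the two colour classes must both be non-empty, which corresponds to the cut of $G$ being non-empty; if $k\ge 1$ (which we may assume, as $k=0$ is trivial) this matches up. Another: I must make sure no inter-triangle edge fails to be a matching edge, i.e.\ that a vertex of $T_u$ is never forced to have two neighbours of the opposite colour — but each vertex of $G'$ has at most one neighbour outside its own triangle, so this never happens.

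Then \textsc{Maximum Matching Cut} on $G'$ returns the size of a largest matching cut, which by the above equals the size of a maximum edge cut of $G$; comparing with $k$ answers the \textsc{Maximum Cut} instance, giving the \NP-hardness. I would also double-check the claimed equivalence ``subcubic line graphs of triangle-free graphs $=$ subcubic $(K_{1,3},\text{diamond})$-free graphs'' so that the theorem statement's phrasing is justified; this is a known characterization (Beineke-type) but worth citing or sketching. The main obstacle I anticipate is purely bookkeeping: getting the triangle gadget exactly right so that (i) $G'$ genuinely is the line graph of a \emph{triangle-free} graph (handling vertices of $G$ of degree $1$ or $2$, where $T_v$ degenerates to an edge or a single vertex, and handling isolated edges of $G$), and (ii) the value of colourings matches cut sizes bijectively with no off-by-one or non-emptiness glitch. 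None of this is deep, but it is where an incorrect construction would most easily hide a bug, so I would lay out the degree cases of $v$ explicitly and state the colouring translation as a lemma before invoking Observation~\ref{lem-cliques-monochrom}.
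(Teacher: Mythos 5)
Your overall strategy is the same as the paper's (reduce from {\sc Maximum Cut} on subcubic graphs, replace vertices by cliques, turn edges of $G$ into inter-clique matching edges), but your specific gadget has a flaw that you flag at the end yet do not resolve, and it is not mere bookkeeping: it breaks the one step that makes the backward direction work. For a vertex $v$ of degree $2$ (resp.\ $1$) your $T_v$ is a $K_2$ (resp.\ $K_1$), and Observation~\ref{lem-cliques-monochrom} only forces cliques on at least three vertices to be monochromatic. A $K_2$-gadget can legally be bichromatic in a valid red-blue colouring (its two endpoints then each force their unique outside neighbour to agree with them), so its internal edge lies in the matching cut while corresponding to no edge of $G$. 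At that point the ``induced $2$-colouring of $V(G)$'' is not even well defined, and your claimed identity ``value of the colouring $=$ size of the corresponding cut'' fails as stated. One can in fact show that the maximum values still coincide, but that requires a separate argument analysing maximal paths of degree-$2$ vertices (interior colour blocks of size one are forbidden, and the achievable block counts happen to match the achievable cut counts with the same parity obstruction) --- a genuinely different and more delicate proof than the one you sketched.

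The paper sidesteps all of this by using a full triangle $C_v$ for \emph{every} vertex $v$, regardless of its degree, and attaching the at most three inter-triangle edges to distinct vertices of $C_v$; equivalently, pad $G$ with pendant edges until every vertex has degree exactly $3$ before taking the line graph of the subdivision (the underlying graph is still triangle-free, and the result is still subcubic). Then every gadget is monochromatic by Observation~\ref{lem-cliques-monochrom}, the bichromatic edges are exactly the inter-triangle edges crossing the induced vertex $2$-colouring, and the size correspondence is the clean bijection you wanted. Alternatively, you could reduce from {\sc Maximum Cut} on \emph{cubic} graphs (also \NP-hard by Yannakakis), for which your gadget never degenerates. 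Either repair is short, but as written your proof of the direction ``matching cut of size $\geq k$ in $G'$ implies cut of size $\geq k$ in $G$'' is incomplete.
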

 
\begin{proof}
Let $ (G,k)$ be an instance of {\sc Maximum Cut}, where $G$ is a subcubic graph.
From $G$, we construct a graph $G'$ as follows. First replace every vertex $v \in V(G)$ by a triangle $C_v$. 
Next, for every edge $uv \in E(G)$, add an edge between a vertex in $C_v$ and a vertex in $C_u$, such that every vertex in $C_v$ has at most one neighbour outside of $C_v$. This is possible since $G$ is subcubic. See Figure~\ref{fig-clawfree} for an example. The graph $G'$ is subcubic, as every vertex in $G'$ has two neighbours inside a triangle and at most one neighbour outside.
Moreover, $G$ is $(K_{1,3},\mbox{diamond})$-free, or equivalently, the line graph of a triangle-free graph. 

We claim that $G$ has an edge cut of size at least~$k$ if and only if $G'$ has a matching cut of size at least $k$.

First suppose that $G$ has an edge cut~$M$ of size at least~$k$. So, $V(G)$ can be partitioned into sets $R$ and $B$, such that for every $e\in E(G)$, it holds that $e\in C$ if and only if $e$ has one end-vertex in $R$ and the other one in $B$. We define the edge set 
$$M' = \set{ u'v'\in E(G')\; |\; u' \in C_u, v'\in C_v, uv \in M}.$$
Note that $|M'|=|M|\geq k$. Moreover, $M'$ contains no edge from any triangle $C_u$, so $M'$ is a matching. For every $v\in V(G)$, we put all vertices of $C_v$ in a set $B'$ if $v\in B$, and else we put all vertices of $C_v$ in a set $R'$. We now find that for every edge $e\in E(G')$, it holds that $e$ belongs to $M'$ if and only if $e$ has one end-vertex in $R'$ and the other one in $B'$. Hence, $M'$ is an edge cut, and thus a matching cut, of $G'$ with $|M'|\geq k$. 

Now suppose that $G'$ has a matching cut~$M'$ of size at least $k$. Let $R'$ and $B'$ be the corresponding sets of red and blue vertices, respectively.
We define the edge set 
$$M= \set{ uv \in E(G)\;|\; u'v' \in M', u' \in C_u, v' \in C_v}.$$
Note that $|M|=|M'|\geq k$.
Due to Lemma~\ref{lem-cliques-monochrom}, every triangle $C_u$ is monochromatic. For every $u\in V(G)$, we put $u$ in a set $R$ if $C_u$ is coloured red, else we put $u$ in a set $B$.
We now find for every edge $e\in E(G)$ that $e$ belongs to $M$ if and only if one end-vertex of $e$ belongs to $R$ and the other one to $B$.
Hence, $M$ is an edge cut in $G$ of size at least~$k$.
\end{proof}

\noindent
For our next \NP-hardness result, we reduce from the following problem. An {\it exact $3$-cover} of a set $X$ is a collection ${\cal C}$ of $3$-element subsets of $X$, such that every 
$x\in X$ is in exactly one $3$-element subset of ${\cal C}$. The {\sc Exact $3$-Cover} problem has as input a set $X$ with $3q$ elements and a collection $\mathcal{S}$ of $3$-element subsets of $X$.
The question is if $\mathcal{S}$ contains an {\it exact $3$-cover} of $X$ (which will be of size $q$). This problem is well known to be \NP-complete (see \cite{Ka72}).

 \begin{figure}[h]
 \vspace*{5mm}
 \begin{center}
\begin{tikzpicture}
\begin{scope}[scale = 0.5]

\node[vertex, label=above:$x_1$](x1) at (0,5){};
\node[vertex, label=above:$x_2$](x2) at (2,5){};
\node[vertex, label=above:$x_3$](x3) at (4,5){};
\node[vertex, label=above:$x_4$](x4) at (6,5){};
\node[vertex, label=above:$x_5$](x5) at (8,5){};
\node[vertex, label=above:$x_6$](x6) at (10,5){};

\node[vertex, label=below:$x_1$](c11) at (0,0){};
\node[vertex, label={[label distance=7]below:$x_2$}](c12) at (1,0.5){};
\node[vertex, label=below:$x_4$](c13) at (2,0){};

\node[vertex, label=below:$x_2$](c21) at (4,0){};
\node[vertex,  label={[label distance=7]below:$x_4$}](c22) at (5,0.5){};
\node[vertex, label=below:$x_5$](c23) at (6,0){};

\node[vertex, label=below:$x_3$](c31) at (8,0){};
\node[vertex,  label={[label distance=7]below:$x_5$}](c32) at (9,0.5){};
\node[vertex, label=below:$x_6$](c33) at (10,0){};

\draw[edge](c11)--(c12);
\draw[edge](c12)--(c13);
\draw[edge](c11) -- (c13);

\draw[edge](c21)--(c22);
\draw[edge](c22)--(c23);
\draw[edge](c21) -- (c23);

\draw[edge](c31)--(c32);
\draw[edge](c32)--(c33);
\draw[edge](c31) -- (c33);

\draw[tredge](c11)--(x1);
\draw[tredge](c12)--(x2);
\draw[tredge](c13)--(x4);

\draw[edge](c21)--(x2);
\draw[edge](c22)--(x4);
\draw[edge](c23)--(x5);

\draw[tredge](c31)--(x3);
\draw[tredge](c32)--(x5);
\draw[tredge](c33)--(x6);

\node[rectangle, minimum width = 160,  minimum height = 30, draw = black](r) at (5,5.1){};
\end{scope}
\end{tikzpicture}
 \caption{The graph $G$ for $X = \set{x_1,\dots, x_6}$ and $\mathcal{S} = \set{\set{x_1,x_2,x_4}, \set{x_2, x_4, x_5}, \set{x_3, x_5, x_6}}$. The vertices in the rectangle form a clique, whose edges we did not draw for readability. For the same reason, we also omitted the superscripts of the vertices in the three triangles. The set $\mathcal{S'} =\set{\set{x_1,x_2,x_4}, \set{x_3, x_5, x_6}}$ is an exact $3$-cover of $X$. The thick red edges in the graph show the corresponding matching cut of size~$3q=6$.}\label{fig-2P3}
 \end{center}
 \end{figure}
 
\begin{theorem}\label{theo-2P3}
\mmc\ is \NP-hard for $2P_3$-free 
quadrangulated 
graphs of radius at most~$2$ and diameter at most~$3$.
\end{theorem}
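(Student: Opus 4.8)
The plan is to reduce from \textsc{Exact 3-Cover}, extending the reduction already sketched in Figure~\ref{fig-2P3}. Given an instance $(X,\mathcal{S})$ with $|X|=3q$, build the graph $G$ as follows: for each element $x\in X$ create an \emph{element vertex} $x$, and let $K$ be a clique on all $3q$ element vertices (these are the vertices inside the rectangle in the figure). For each set $S=\{x_i,x_j,x_k\}\in\mathcal{S}$ create a \emph{set-triangle} $T_S$ with three vertices $x_i^S,x_j^S,x_k^S$, mutually adjacent, and add the three ``connector'' edges $x_i^Sx_i$, $x_j^Sx_j$, $x_k^Sx_k$ joining the triangle to the corresponding element vertices of $K$. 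No other edges are added. One should then verify the promised structural properties: every set-triangle vertex has exactly one neighbour outside its triangle, so each element vertex $x$ has exactly $|\{S:x\in S\}|$ triangle-neighbours plus $3q-1$ clique-neighbours; the graph has radius at most $2$ (any element vertex reaches all of $K$ in one step and every triangle vertex in two steps) and diameter at most $3$ (two triangle vertices in different triangles are at distance $3$ via their element vertices); it is $2P_3$-free because any two disjoint $P_3$'s would each have to avoid $K$, but $G-K$ is a disjoint union of triangles, and one checks a single triangle cannot host a $P_3$ that is induced in $G$ while avoiding $K$ — more carefully, an induced $P_3$ must contain at most one element vertex (since $K$ is a clique), so its other vertices lie in a single set-triangle, and two vertex-disjoint such configurations are impossible; and it is quadrangulated (chordless cycles of length $\ge 5$ cannot occur since any cycle leaving a triangle must pass through $K$, and $K$ being a clique forces a chord). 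These verifications are routine case analysis.

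The core equivalence to prove is: $\mathcal{S}$ contains an exact $3$-cover of $X$ if and only if $G$ has a matching cut of size at least $3q$, and moreover $3q$ is the maximum possible size of a matching cut of $G$. For the forward direction, given an exact cover $\mathcal{C}\subseteq\mathcal{S}$, colour $K$ blue, colour every vertex of each set-triangle $T_S$ with $S\in\mathcal{C}$ red, and colour every vertex of each $T_S$ with $S\notin\mathcal{C}$ blue; by Observation~\ref{lem-cliques-monochrom} each triangle must indeed be monochromatic, and this colouring is valid since each element vertex $x$ has exactly one red neighbour (namely $x^S$ for the unique $S\in\mathcal{C}$ with $x\in S$) and each red triangle-vertex has exactly one blue neighbour (its connector edge into $K$). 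The bichromatic edges are precisely the $3q$ connector edges from covered-triangle vertices to $K$, giving a matching cut of size $3q$. For the converse and the optimality bound, take any valid red-blue colouring of $G$. Since $K$ is a clique of size $3q\ge 3$ (assume $q\ge 1$), $K$ is monochromatic, say blue; by Observation~\ref{lem-cliques-monochrom} every set-triangle is monochromatic. Let $\mathcal{R}=\{S:T_S \text{ is red}\}$. Validity at $K$ means each element vertex $x$ has at most one red neighbour, i.e.\ $x$ lies in at most one $S\in\mathcal{R}$; hence the red triangles correspond to pairwise disjoint $3$-element sets, so $|\mathcal{R}|\le q$ and the number of bichromatic edges is exactly $\sum_{S\in\mathcal{R}}3 = 3|\mathcal{R}|\le 3q$. (If $K$ were monochromatic red the argument is symmetric; and the colouring must use both colours, forcing at least one triangle off-colour from $K$, so some red-blue colouring exists.) Thus the maximum matching cut has size exactly $3q$, attained precisely when $\mathcal{R}$ is a packing of size $q$, i.e.\ an exact cover. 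Combined with the \NP-completeness of \textsc{Exact 3-Cover}, this proves the theorem.

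The main obstacle I anticipate is not the reduction's correctness — which is a clean clique-forces-monochromatic argument — but the simultaneous bookkeeping of all four graph-class constraints ($2P_3$-free, quadrangulated, radius $\le 2$, diameter $\le 3$) under the specific connector pattern. In particular, the $2P_3$-freeness hinges on the element vertices forming a clique \emph{and} on the connector edges being ``matching-like'' at each triangle (each triangle vertex has only one outside neighbour), and one must be sure that no induced $P_3$ can live entirely inside $G-K$ spread across two triangles plus hang a disjoint $P_3$ elsewhere; the safe way to see this is to observe that $G-K$ is a disjoint union of triangles (hence contains no induced $P_3$ at all), so any induced $P_3$ in $G$ meets $K$, and since $K$ is a clique it meets $K$ in exactly one vertex, whence its three vertices lie in $\{x\}\cup T_S$ for a single $S$ — two vertex-disjoint such $P_3$'s would need two disjoint element vertices with disjoint incident triangles, but then the two middle-or-endpoint structures still share the global clique $K$ adjacency, contradicting disjointness being \emph{induced}; making this last step fully rigorous (rather than hand-wavy) is the one place care is genuinely needed, and I would write it out as an explicit short lemma before the main equivalence.
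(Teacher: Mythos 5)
Your proposal is correct and follows essentially the same reduction from {\sc Exact $3$-Cover} as the paper, with the same clique-plus-set-triangles construction and the same monochromaticity argument via Observation~\ref{lem-cliques-monochrom}; you additionally make explicit that $3q$ is the maximum possible matching cut size, which the paper leaves implicit. One small slip in your side remarks: an induced $P_3$ can meet $K$ in \emph{two} vertices (e.g.\ $x_j$--$x_i$--$x_i^S$), so the claims ``at most one element vertex'' and ``meets $K$ in exactly one vertex'' are false, but this does not affect the argument, since all that is needed is that every induced $P_3$ meets the clique $K$ (because $G-K$ is a disjoint union of triangles), whence two induced $P_3$'s can never be anticomplete.
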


\begin{proof}
Let $(X,\mathcal{S})$ be an instance of {\sc Exact $3$-Cover} where $X=\{x_1,\ldots,x_{3q}\}$ and ${\cal S}=\{S_1,\ldots,S_k\}$, such that each $S_i$ contains exactly three elements of $X$.
 From $(X,{\cal S})$ we construct a graph $G$. We first define a clique $K_X=\{x_1,\ldots,x_{3q}\}$. For each $S\in {\cal S}$, we do as follows. Let $S=\{x_h,x_i,x_j\}$. We add a triangle $K_S$ on vertices $x^S_h$, $x^S_i$ and $x^S_j$. We add an edge between a vertex $x_i\in K_X$ and a vertex $u\notin K_X$ if and only if $u=x_i^S$ for some $S\in {\cal S}$. This completes the construction of $G$. See Figure~\ref{fig-2P3} for an example.

As every induced $P_3$ must contain at least one vertex from the clique $K_X$, we find that $G$ is $2P_3$-free. As $G$ is not only $2P_3$-free, but also $(C_5,C_6)$-free, $G$ is quadrangulated.
Consider some $x_i\in K_X$. Then every other vertex is of distance at most~$2$ from $x_i$. 
Consider some $x^S_i\in K_S$ for some $S\in {\cal S}$. Then every other vertex is of distance at most~$3$ from $x^S_i$.
Hence, the radius of $G$ is at most~$2$ and the diameter of $G$ is at most~$3$. 

We claim that ${\cal S}$ contains an exact $3$-cover of $X$ if and only if $G$ has a matching cut of size~$3q$.
First suppose that ${\cal S}$ contains an exact $3$-cover ${\cal C}$ of $X$. We colour every vertex of $K_X$ red. We colour a triangle $K_S$ blue if $S\in {\cal S}$ and otherwise we colour it red. This yields a valid red-blue colouring of value~$3q$, and thus a matching cut of size~$3q$.

Now suppose that $G$ has a matching cut~$M$ of size~$3q$. As $K_X$ is a clique of size $3q\geq 3$, the corresponding valid red-blue colouring assigns every vertex of $K_X$ the same colour, say red. As every triangle $K_S$ is monochromatic, this means that exactly $q$ triangles must be coloured blue. Moreover, no two blue triangles have a common red neighbour in $K_X$. Hence, the blue triangles correspond to an exact $3$-cover of $X$. See again Figure~\ref{fig-2P3}.
\end{proof}

\section{Dichotomies for Maximum Matching Cut}\label{s-dicho}

In this section we prove our three dichotomy results, which we restate below.

\medskip
\noindent
{\bf Theorem~\ref{t-dichodiam} (restated).}
{\it For an integer~$d$, \mmc\ on graphs of diameter $d$ is 
\begin{itemize}
\item polynomial-time solvable if $d \leq 2$, and
\item \NP-hard if $d \geq 3$.
\end{itemize}}

\begin{proof}
The two results follow from Theorems~\ref{theo-diam} and~\ref{theo-2P3}, respectively.
\end{proof}

\noindent
{\bf Theorem~\ref{t-dichorad} (restated).}
{\it For an integer~$r$, \mmc\ on graphs of radius~$r$~is 
\begin{itemize}
\item polynomial-time solvable if $r \leq 1$, and
\item \NP-hard if $r \geq 2$.
\end{itemize}}

\begin{proof}
A graph of radius~$1$ has a dominating vertex, and thus it has a matching cut if and only if it has a vertex of degree~$1$. This can be checked in polynomial time and thus proves the first result. The second result follows from Theorem~\ref{theo-2P3}.
\end{proof}

\noindent
{\bf Theorem~\ref{t-dichoH} (restated).}
{\it For a graph~$H$, \mmc\ on $H$-free graphs is 
\begin{itemize}
\item polynomial-time solvable if $H\ssi sP_2+P_6$ for some $s\geq 0$, and
\item \NP-hard if $H\si K_{1,3}$, $2P_3$ or $H\si C_r$ for some $r\geq 3$.
\end{itemize}}

\begin{proof}
Let $H$ be a graph. If $H$ contains a cycle, then {\sc Matching Cut}, and thus {\sc Maximum Matching Cut}, is \NP-hard due to Theorem~\ref{t-main1}.
Now suppose that $H$ has no cycle, so $H$ is a forest. If $H$ contains a vertex of degree at least~$3$, then the class of $H$-free graphs contains the class of $K_{1,3}$-free graphs. The latter class contains the class of line graphs, and thus we can apply Theorem~\ref{theo-clawfree}. 

Now suppose that $H$ is a forest of maximum degree at most~$2$, that is, $H$ is a linear forest. If $H\ssi sP_2+P_6$ for some $s\geq 0$, then we apply 
Theorem~\ref{theo-P6} in combination with $s$ applications of Theorem~\ref{theo-HP2}.
Else $H$ contains an induced $2P_3$ and we apply Theorem~\ref{theo-2P3}. This completes the proof.
\end{proof}

\section{Dichotomies for Maximum Disconnected Perfect Matching}\label{s-dpm}

In this section we proof the results of Section~\ref{s-second}.
We first need a similar lemma as Lemma~\ref{l-dom}, which is proven by copying the arguments of the proof of Lemma~\ref{l-dom} and using the fact that we can check if a graph has a perfect matching in polynomial time by using, for instance, 
Edmonds' Blossom algorithm.

\begin{lemma}\label{l-dom2}
 For a connected $n$-vertex graph $G$ with domination number~$g$, it is possible to find a maximum perfect-extendable red-blue colouring (if a red-blue colouring exists) in $O(2^gn^{g+2})$ time.
\end{lemma}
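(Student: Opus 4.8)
The plan is to mirror the proof of Lemma~\ref{l-dom} almost verbatim, replacing the notion of a valid red-blue colouring by that of a perfect-extendable red-blue colouring. First I would let $D$ be a dominating set of $G$ with $|D|=g$, and enumerate all $2^g$ ways of colouring the vertices of $D$ red or blue. For each red vertex of $D$ that has no blue neighbour in $D$, I would try all $O(n)$ choices of colouring at most one of its neighbours blue (forcing its remaining neighbours red); symmetrically for blue vertices of $D$ with no red neighbour; and for vertices of $D$ that already have a cross-coloured neighbour in $D$, I would colour all their remaining neighbours with their own colour. Since $D$ is dominating, this determines a red-blue colouring of the whole of $G$, and the total number of such colourings examined is $O(2^g n^g)$.

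The only genuine difference from Lemma~\ref{l-dom} is the verification step: instead of merely checking validity and counting bichromatic edges, I need to check, for each candidate colouring, whether it is \emph{perfect-extendable}, i.e.\ whether $G$ has a perfect matching containing every bichromatic edge. Since the bichromatic edges of a valid colouring already form a matching $M$, this amounts to asking whether $G-V(M)$ has a perfect matching, which can be decided in polynomial time by Edmonds' blossom algorithm. (If a vertex is already saturated by $M$ we simply remove it; what remains must be perfectly matched inside its own colour class, which is exactly a perfect matching of $G-V(M)$.) For each colouring that passes both the validity test and the perfect-matching test, I record its value (the number of bichromatic edges, equivalently $|M|$), and at the end I output one of maximum value; if no examined colouring is both valid and perfect-extendable, I report that $G$ has no perfect-extendable red-blue colouring.

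For the running time, each of the $O(2^g n^g)$ colourings is checked for validity in $O(n^2)$ time and for perfect-extendability in polynomial time; absorbing the polynomial matching subroutine into the stated bound gives $O(2^g n^{g+2})$ overall, matching Lemma~\ref{l-dom}. Correctness follows because any perfect-extendable red-blue colouring of $G$ restricts to \emph{some} colouring of $D$ together with \emph{some} choice (at most one cross-coloured neighbour) at each vertex of $D$ that our enumeration considers, so the optimal colouring is among those examined; conversely every colouring we output has been explicitly certified valid and perfect-extendable.

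I do not expect any real obstacle here: the combinatorial enumeration is identical to Lemma~\ref{l-dom}, and the one new ingredient—testing for a perfect matching that contains a fixed matching—is a standard polynomial-time task. The only point requiring a line of care is the equivalence ``$G$ has a perfect matching containing all bichromatic edges'' $\iff$ ``$G-V(M)$ has a perfect matching'', which is immediate since $M$ itself is a matching and its edge set is disjoint from $E(G-V(M))$.
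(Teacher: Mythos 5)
Your proposal is correct and matches the paper's own (very brief) proof, which likewise copies the enumeration from Lemma~\ref{l-dom} and adds a perfect-matching test via Edmonds' blossom algorithm. The reduction you spell out --- a perfect matching containing the matching $M$ of bichromatic edges exists iff $G-V(M)$ has a perfect matching --- is exactly the right (and easy) extra step.
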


\noindent
Note that Lemmas~\ref{l-ll1} and~\ref{l-ll2} can be adapted to perfect-extendable red-blue colourings in a straightforward way, since the propagation rules R1--R3 hold for valid red-blue colourings and every perfect-extendable red-blue colouring is valid (that is, the propagation of any partial red-blue colouring does not influence if the resulting partial red-blue colouring is perfect-extendable or not). In other words, we immediately obtain the following two lemmas.

\begin{lemma}\label{l-ll1b}
Let $G$ be a connected graph with a starting pair $(S'', T'')$ with core $(S^*, T^*)$, and with an intermediate tuple $(S', T',X', Y' )$. The following three statements hold:\\[-10pt]
\begin{itemize}
\item [(i)] $S^*\subseteq S'$, $S''\subseteq X'$ and $T^*\subseteq T'$, $T''\subseteq Y'$ and $X'\cap Y'=\emptyset$,\\[-10pt]
\item [(ii)] for every integer $\nu$, $G$ has a perfect-extendable red-blue $(S^*, T^*, S'', T'')$-colouring of value~$\nu$ if and only if $G$ has a perfect-extendable red-blue $(S', T',X', Y' )$-colouring of value~$\nu$ (note that the backward implication holds by definition), and\\[-10pt]
\item [(iii)] every vertex in $S'$ has exactly one neighbour in $Y'$, which belongs to $T'$; every vertex in~$T'$ has exactly one neighbour in $X'$, which belongs to $S'$; 
every vertex in $X'\setminus S'$ has no neighbour in $Y'$; every vertex in $Y'\setminus T'$ has no neighbour in $X'$; and every vertex of $V\setminus (X'\cup Y')$ has no neighbour in $S'\cup T'$, at most one neighbour in $X'\setminus S'$, and at most one neighbour in $Y'\setminus T'$.\\[-10pt]
\end{itemize}
Moreover, $(S', T',X', Y' )$ is obtained in polynomial time.
\end{lemma}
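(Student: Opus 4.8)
The plan is to observe that Lemma~\ref{l-ll1b} is obtained from Lemma~\ref{l-ll1} essentially by substituting ``perfect-extendable'' for ``valid'' throughout, so that almost all of the required work is already contained in the proof of Lemma~\ref{l-ll1}. First I would note that statements~(i) and~(iii), as well as the polynomial-time bound, make no reference to which notion of red-blue colouring we use: they only concern the sets $S',T',X',Y'$ produced by exhaustively applying rules R1 and R2 to the starting pair $(S'',T'')$, together with the adjacency structure of $G$ between these sets and $V\setminus(X'\cup Y')$. Since this propagation process is literally the same as in Lemma~\ref{l-ll1}, parts (i) and (iii) hold verbatim and $(S',T',X',Y')$ is still computed in polynomial time. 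Hence only statement~(ii) needs a (short) argument.

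For~(ii), the backward implication is immediate from the definitions: by~(i) we have $S^*\subseteq S'$, $T^*\subseteq T'$, $S''\subseteq X'$ and $T''\subseteq Y'$, so every red-blue $(S',T',X',Y')$-colouring is in particular a red-blue $(S^*,T^*,S'',T'')$-colouring of the same value, and being perfect-extendable is unaffected by this relabelling. For the forward implication, I would take a perfect-extendable red-blue $(S^*,T^*,S'',T'')$-colouring $c$ of value $\nu$; since every perfect-extendable colouring is valid, $c$ is in particular a \emph{valid} $(S^*,T^*,S'',T'')$-colouring of value $\nu$. Now I would invoke the safeness of R1 and R2 exactly as in the proof of Lemma~\ref{l-ll1}: rule R1 can never fire (otherwise $G$ would have no valid $(S^*,T^*,S'',T'')$-colouring at all, contradicting the existence of $c$), and every colour that rule R2 assigns is the \emph{unique} colour that the vertex in question can receive in any valid $(S^*,T^*,S'',T'')$-colouring, hence the colour $c$ already gives it. Consequently $c$ respects all the constraints recorded in $(S',T',X',Y')$, i.e.\ $c$ is a red-blue $(S',T',X',Y')$-colouring, still of value $\nu$. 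Finally, perfect-extendability is an intrinsic property of the colouring $c$ (the existence of a perfect matching of $G$ containing all bichromatic edges of $c$), which does not change when we regard $c$ as a $(S',T',X',Y')$-colouring; therefore $c$ is a perfect-extendable $(S',T',X',Y')$-colouring of value $\nu$, as required.

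The only point requiring care — and it is the ``main obstacle'', though a mild one — is precisely this last observation: that imposing the extra requirement ``$c$ extends to a perfect matching'' does not interfere with the safeness of R1 and R2. It does not, because these rules only encode conditions that are \emph{necessary} for validity (and hence a fortiori necessary for perfect-extendability), and because applying them never modifies the colouring $c$ itself; they merely identify colours $c$ is already forced to use. For this reason no genuine difficulty arises, which is what is meant by the remark in the text that the lemma is obtained ``in a straightforward way''. The analogous adaptation of Lemma~\ref{l-ll2} goes through identically, additionally using that R3 is mono-safe (Lemma~\ref{lem-monosafe}) for exactly the same reason.
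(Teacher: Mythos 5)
Your proposal is correct and matches the paper's own justification: the paper derives Lemma~\ref{l-ll1b} from Lemma~\ref{l-ll1} precisely by noting that R1 and R2 only encode constraints forced by validity, that every perfect-extendable colouring is valid, and that perfect-extendability is an intrinsic property of the colouring unaffected by the propagation. Your more detailed unpacking of statement~(ii) is a faithful elaboration of that same argument.
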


\begin{lemma}\label{l-ll2b}
Let $G$ be a connected graph with a monochromatic intermediate tuple $(S', T',X', Y' )$ and a resulting final
tuple $(S, T,X, Y)$. The following three statements hold:\\[-10pt]
\begin{itemize}
\item [(i)] $S'\subseteq S$, $X'\subseteq X$, $T'\subseteq T$, $Y'\subseteq Y$, and $X\cap Y=\emptyset$,\\[-10pt]
\item [(ii)] For every integer $\nu$, $G$ has a perfect-extendable (monochromatic) red-blue $(S', T',X', Y' )$-colouring of value~$\nu$
if and only if $G$ has a perfect-extendable monochromatic red-blue $(S, T,X, Y)$-colouring of value $\nu$ (note that the backward implication holds by definition), and\\[-10pt]
\item [(iii)] every vertex in $S$ has exactly one neighbour in $Y$, which belongs to $T$; every vertex in~$T$ has exactly one neighbour in $X$, which belongs to $S$; 
every vertex in $X\setminus S$ has no neighbour in $Y$ and no two neighbours in the same connected component of $G[V\setminus (X\cup Y)]$; every vertex in $Y\setminus T$ has no neighbour in $X$ and no two neighbours in the same connected component of $G[V\setminus (X\cup Y)]$; and
every vertex of $V\setminus (X\cup Y)$ has no neighbour in $S\cup T$, at most one neighbour in $X\setminus S$, and at most one neighbour in $Y\setminus T$.\\[-10pt]
\end{itemize}
Moreover, $(S,T,X,Y)$ is obtained in polynomial time.
\end{lemma}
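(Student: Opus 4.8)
The plan is to observe that Lemma~\ref{l-ll2b} is obtained from Lemma~\ref{l-ll2} by replacing ``valid red-blue colouring'' with ``perfect-extendable red-blue colouring'' everywhere, and that the only new content relative to Lemma~\ref{l-ll2} is part~(ii), since parts~(i) and~(iii) are statements purely about the structure of the tuple $(S,T,X,Y)$ and the graph~$G$, and do not refer to colourings at all. Hence (i) and (iii) are literally identical to the corresponding statements in Lemma~\ref{l-ll2}, and the ``moreover'' part (polynomial-time computability of $(S,T,X,Y)$) is also unchanged; all three are inherited verbatim. So the real work is entirely in~(ii).

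For~(ii), the key observation, already flagged in the paragraph preceding the lemma, is that a perfect-extendable red-blue colouring is by definition a valid red-blue colouring together with the extra property that the set of bichromatic edges extends to a perfect matching of~$G$. I would first recall from Lemma~\ref{lem-monosafe} that rule R3 is mono-safe, and note that rules R1 and R2 are safe by Lemma~\ref{l-ll1}; I would then argue that each of R1, R2, R3 is in fact ``perfect-extendable-safe'', i.e., for every integer $\nu$, $G$ has a perfect-extendable monochromatic red-blue $(S,T,X,Y)$-colouring of value~$\nu$ before applying the rule if and only if it has one afterwards. The crucial point is that applying R1, R2 or R3 never changes the graph~$G$ and never changes which edges are forced to be bichromatic beyond what any valid colouring already satisfies: the rules only deduce the colour of a vertex whose colour was already determined in every valid colouring of the relevant type. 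Concretely, when R2 moves $v$ into $X$ (say), every valid colouring already coloured $v$ red, so the set of bichromatic edges — and hence whether it extends to a perfect matching — is unaffected; similarly for R3, where every valid monochromatic colouring already gave the component $D$ the colour of~$w$. Therefore the bijection between colourings before and after each rule application preserves both validity, the monochromatic property, the value $\nu$, \emph{and} the set of bichromatic edges, so it preserves perfect-extendability.

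Chaining these per-rule equivalences from the intermediate tuple $(S',T',X',Y')$ through to the final tuple $(S,T,X,Y)$, exactly as in the proof of Lemma~\ref{l-ll2} (which combines Lemma~\ref{l-ll1} with Lemma~\ref{lem-monosafe}), yields~(ii): $G$ has a perfect-extendable (monochromatic) red-blue $(S',T',X',Y')$-colouring of value~$\nu$ if and only if it has a perfect-extendable monochromatic red-blue $(S,T,X,Y)$-colouring of value~$\nu$, with the backward implication holding by definition since any $(S,T,X,Y)$-colouring is an $(S',T',X',Y')$-colouring by part~(i). I do not expect any serious obstacle here: the argument is a routine transfer of the existing proof, and the one thing to be careful about is making explicit that the bijections underlying the safety of R1--R3 act as the identity on the edge set and therefore carry a perfect matching completing the bichromatic edges to a perfect matching completing the bichromatic edges of the new colouring. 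An analogous remark handles Lemma~\ref{l-ll1b} relative to Lemma~\ref{l-ll1}.
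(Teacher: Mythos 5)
Your proposal is correct and matches the paper's approach: the paper derives Lemma~\ref{l-ll2b} from Lemma~\ref{l-ll2} with exactly the observation you make, namely that rules R1--R3 only force colours already determined in every valid colouring of the relevant type, so the set of colourings and their bichromatic edge sets are unchanged and perfect-extendability (a property of the colouring alone) is preserved, while parts~(i), (iii) and the running-time claim carry over verbatim. The paper compresses this into a single remark before stating the lemma; your write-up is just a more explicit version of the same argument.
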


\noindent
We prove our next lemma by similar but simpler arguments as in the proof of Lemma~\ref{lem-plesnikMC}.
 
\begin{lemma}\label{lem-plesnikDPM}
Let $G=(V,E)$ be a connected graph with a final tuple $(S,T,X,Y)$. If $V\setminus (X\cup Y)$ is an independent set, then it is possible to find in polynomial time either a maximum perfect-extendable red-blue $(S,T,X,Y)$-colouring of $G$, or conclude that $G$ has no perfect-extendable red-blue $(S,T,X,Y)$-colouring.
\end{lemma}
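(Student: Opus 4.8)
The plan is to mimic the proof of Lemma~\ref{lem-plesnikMC}, but with the extra requirement that the set of bichromatic edges lies in a perfect matching. Set $Z = V\setminus(X\cup Y)$ and $W = N(Z)$. Since $Z$ is independent, Lemma~\ref{l-ll2b}-(iii) gives that every vertex of $W$ lies in $(X\setminus S)\cup(Y\setminus T)$, and every vertex of $Z$ has at most one neighbour in $X\setminus S$ and at most one in $Y\setminus T$ (and none in $S\cup T$). As before, let $U\subseteq Z$ consist of the vertices of $Z$ with a neighbour in both $X\setminus S$ and $Y\setminus T$; such a vertex is automatically incident to a bichromatic edge in any valid red-blue $(S,T,X,Y)$-colouring, no matter its own colour.

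The core claim is: $G$ has a perfect-extendable red-blue $(S,T,X,Y)$-colouring if and only if $G$ has a perfect matching $M$ that saturates $U$ via edges between $Z$ and $W$ (equivalently, $M\cap E(G[Z\cup W])$ is a $U$-saturating matching of $G[Z\cup W]$) and contains every edge between $S$ and $T$. The forward direction is immediate from Lemma~\ref{l-ll2b}-(iii): in a perfect-extendable colouring, the only bichromatic edges are the $S$--$T$ edges plus some $Z$--$W$ edges, each $S$--$T$ edge must be in the perfect matching (it is bichromatic), and the perfect matching restricted to $Z\cup W$ saturates $U$ as noted above; since the colouring is perfect-extendable, such a perfect matching exists. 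For the backward direction, given a perfect matching $M$ with these properties, colour $X$ red and $Y$ blue; for $z\in Z$, if $z$ is matched by $M$ to some $w\in W$, colour $z$ the opposite colour of $w$ (this is well defined since $W\subseteq(X\setminus S)\cup(Y\setminus T)$); if $z$ is matched within $Z$, then $z\notin U$ (as $M$ saturates $U$ through $W$), so all neighbours of $z$ lie in $X\setminus S$ or all lie in $Y\setminus T$ — colour $z$ accordingly; every such $z$-component of $Z$ then receives a consistent colour. One checks this is a valid red-blue $(S,T,X,Y)$-colouring whose bichromatic edges are exactly the $S$--$T$ edges together with the $Z$--$W$ edges of $M$, all of which lie in $M$, so $M$ witnesses perfect-extendability.

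It remains to turn this equivalence into a polynomial-time algorithm. Contract the mandatory $S$--$T$ edges (or simply delete both endpoints of every $S$--$T$ edge, since by Lemma~\ref{l-ll2b}-(iii) each vertex of $S$ has a unique partner in $T$ and vice versa, and these edges are forced), and likewise delete all of $X\cup Y$ that is matched internally — more carefully, it suffices to apply Theorem~\ref{theo-saturatingMatchings} (Plesn\'ik's algorithm) on the graph $G$ with $S' := U\cup(V\setminus(Z\cup W))$, forcing the $S$--$T$ edges to be used and requiring a perfect matching overall; since the value of a perfect-extendable colouring is the size of its red interface, which is $|S|$ plus the size of the $U$-saturating part, maximizing the value reduces to maximizing the number of $Z$--$W$ edges used by such a matching, and this too is handled by Plesn\'ik's algorithm by setting appropriate edge weights (weight $1$ on $Z$--$W$ edges, weight $0$ elsewhere, and the cardinality bounds to force a perfect matching). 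The main obstacle is the bookkeeping: making sure the reduction to Plesn\'ik's weighted saturating-matching problem correctly encodes simultaneously (a) ``perfect matching of all of $G$'', (b) ``all $S$--$T$ edges used'', (c) ``$U$ saturated by $Z$--$W$ edges'', and (d) ``maximize the count of $Z$--$W$ matching edges'' — but each of these is a linear constraint or objective of the type Theorem~\ref{theo-saturatingMatchings} accepts, so no genuinely new idea is needed beyond careful setup. Hence the whole procedure runs in polynomial time, which completes the proof.
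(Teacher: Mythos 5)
Your argument is correct: the characterisation of the bichromatic edges (forced $S$--$T$ edges plus some $Z$--$W$ edges), the forced inclusion of the $S$--$T$ edges in any witnessing perfect matching, and the back-translation from a perfect matching of $G\setminus(S\cup T)$ to a valid colouring of $Z$ all go through, and the validity check you defer does hold because each $z\in Z$ has at most one neighbour in $X\setminus S$ and at most one in $Y\setminus T$, so each $w\in W$ acquires at most one oppositely-coloured neighbour (namely its matching partner in $Z$). The difference from the paper is that you import the full machinery of Lemma~\ref{lem-plesnikMC} --- the set $U$, the $U$-saturation constraint, and a weighted instance of Plesn\'ik's algorithm to maximise the number of $Z$--$W$ matching edges --- whereas the paper observes that all of this is vacuous here: since $Z$ is independent, disjoint from $S\cup T$, and has $N(Z)\subseteq (X\setminus S)\cup(Y\setminus T)$, \emph{every} perfect matching of $G\setminus(S\cup T)$ necessarily matches every vertex of $Z$ (hence every vertex of $U$) into $W$ and therefore contains exactly $|Z|$ edges between $Z$ and $W$; the resulting colouring always has value $|S|+|Z|$, which is the obvious upper bound, so the optimisation is trivial and a single call to Edmonds' blossom algorithm on $G\setminus(S\cup T)$ suffices (also note your ``$z$ matched within $Z$'' case is empty). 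Your heavier reduction still yields a correct polynomial-time algorithm --- the weighted, cardinality-constrained form of Plesn\'ik's result quoted in the paper's footnote does accommodate your constraints once the $S$--$T$ edges are handled by deleting $S\cup T$ as you suggest --- but it obscures the fact that this lemma is strictly simpler than its {\sc Maximum Matching Cut} counterpart rather than a variant of equal difficulty.
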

 
\begin{proof} 
Let $Z=V\setminus (X\cup Y)$. Let $W=N(Z)$. Recall that $Z$ is independent. Hence, by Lemma~\ref{l-ll2b}-(iii), every vertex of $W$ belongs to $(X\setminus S) \cup (Y\setminus T)$.
By combining this observation with Lemma~\ref{l-ll2b}-(iii), we find that every bichromatic edge of any perfect-extendable red-blue $(S,T,X,Y)$-colouring (if it exists) is 

\begin{itemize}
\item [(i)] either an edge with one end-vertex in $S$ and the other one in $T$, or
\item [(ii)] an edge with one end-vertex in $Z$ and the other one in either $X\setminus S$ or $Y\setminus T$.
\end{itemize}

\noindent
 By Lemma~\ref{l-ll2b}-(iii), the subgraph of $G$ induced by $S\cup T$ has a perfect matching that consist of every edge with one end-vertex in $S$ and the other one in $T$.
Hence, a maximum perfect-extendable red-blue $(S,T,X,Y)$-colouring (if it exists) is the union of

\begin{itemize}
\item [(i)] the set of edges with one end-vertex in $S$ and the other one in $T$; and
\item [(ii)] a perfect matching in $G\setminus(S \cup T)$ that contains as many edges with one end-vertex in $Z$ (and the other one in either $X\setminus S$ or $Y\setminus T$) as possible.
\end{itemize}

\noindent 
We first check in polynomial time if $G\setminus(S \cup T)$ has a perfect matching (for example, by using 
Edmonds' Blossom algorithm). If not, then $G$ has no perfect-extendable red-blue $(S,T,X,Y)$-colouring, and we stop. Otherwise, we found a perfect matching $M$ of $G\setminus (S\cup T)$, and we continue as follows. 

We colour every vertex in $X$ red and every vertex in $Y$ blue. By Lemma~\ref{l-ll2b}-(iii), every vertex in $Z$ has at most one neighbour in $X$, which belongs to $X\setminus S$, and at most one neighbour in $Y$, which belongs to $Y\setminus T$. As $Z$ is independent, we can do as follows for every $u\in Z$. If $u$ has degree~$1$ in $G$ and a neighbour $x\in X$, then $ux$ must belong to $M$, and we colour $u$ blue.  If $u$ has degree~$1$ in $G$ and a neighbour $y\in Y$, then $uy$ must belong to $M$, and we colour $u$ red. If $u$ has degree~$2$ in $G$, and a neighbour $x\in X$ and a neighbour $y\in Y$, then we colour $u$ blue if $ux\in M$ and red if $uy\in M$. This takes polynomial time. As every edge of $M$ with one end-vertex in $Z$ and the other one in either $X\setminus S$ or $Y\setminus T$ is monochromatic, we found, in polynomial time, a maximum perfect-extendable red-blue $(S,T,X,Y)$-colouring of $G$.
 \end{proof}
  
 \noindent
 The following result is proven in exactly the same way as the proof of Theorem~\ref{theo-P6} after replacing
 Lemma~\ref{l-dom} by Lemma~\ref{l-dom2};  
  Lemma~\ref{l-ll1} by Lemma~\ref{l-ll1b};
 Lemma~\ref{l-ll2} by Lemma~\ref{l-ll2b}; and
 Lemma~\ref{lem-plesnikMC} by Lemma~\ref{lem-plesnikDPM}.

\begin{theorem} \label{theo-P62}
\mdpm\ is solvable in polynomial time for $P_6$-free graphs.
\end{theorem}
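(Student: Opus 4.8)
The plan is to follow the proof of Theorem~\ref{theo-P6} essentially line by line, replacing every occurrence of ``valid red-blue colouring'' by ``perfect-extendable red-blue colouring'' and swapping in the perfect-matching analogues of the auxiliary lemmas that the excerpt has already prepared. By Observation~\ref{o}, it suffices to compute a maximum perfect-extendable red-blue colouring of a connected $P_6$-free graph $G$. First I would invoke Theorem~\ref{t-hp} to obtain, in polynomial time, either a dominating induced $C_6$ or a dominating (not necessarily induced) complete bipartite subgraph $F=K_{r,s}$ of $G$ (say $r\le s$). In the $C_6$ case the domination number is at most $6$, so Lemma~\ref{l-dom2} finishes the job; likewise, if $s\le 2$ the domination number is at most $4$ and Lemma~\ref{l-dom2} again applies. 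So assume $s\ge 3$.

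As in the original proof, if $r\ge 2$ then $V(F)$ is monochromatic by Observation~\ref{lem-cliques-monochrom} and we colour it blue; if $r=1$ we colour the centre blue and branch over the $O(n)$ choices of at most one red neighbour; and if $F$ ends up monochromatic we branch over the $O(n^2)$ choices of a bichromatic edge to obtain a starting pair with non-empty core. For each branch we exhaustively apply rules R1 and R2, using Lemma~\ref{l-ll1b} in place of Lemma~\ref{l-ll1} to either discard the branch or obtain an intermediate tuple $(S',T',X',Y')$. The two claims from the proof of Theorem~\ref{theo-P6} carry over verbatim: every uncoloured vertex has a blue neighbour lying in $F$ (since all neighbours of the unique red coloured vertex are forced red), and hence the intermediate tuple is monochromatic. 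We then apply R1--R3 and Lemma~\ref{l-ll2b} to obtain a final tuple $(S,T,X,Y)$, and the $P_6$-freeness argument of Claim~\ref{c-one} (the induced-$P_6$ obstructions depicted in Figure~\ref{fig-inducedpath}) shows, with the same proof, that in any perfect-extendable red-blue $(S,T,X,Y)$-colouring at most one connected component of $G[Z]$ with at least two vertices can be coloured red; this argument never uses the perfect-matching constraint.

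Finally, I would branch over the $O(n)$ choices of which component of $G[Z]$ of size at least $2$ is red, propagate once more via R1--R3, and observe that the remaining uncoloured vertices form an independent set. At this point Lemma~\ref{lem-plesnikDPM} (rather than Lemma~\ref{lem-plesnikMC}) applies, producing in polynomial time either a maximum perfect-extendable red-blue $(S,T,X,Y)$-colouring or the conclusion that none exists; by Lemmas~\ref{l-ll1b}-(ii) and~\ref{l-ll2b}-(ii) this is a maximum perfect-extendable red-blue $(S^*,T^*,S'',T'')$-colouring. Taking the best value over all $O(n^3)$ branches yields a maximum perfect-extendable red-blue colouring of $G$, and hence the desired algorithm. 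The only genuinely new ingredient is Lemma~\ref{lem-plesnikDPM}, which replaces the ``$U$-saturating matching'' step by finding a perfect matching of $G\setminus(S\cup T)$ that maximises the number of edges between $Z$ and $(X\setminus S)\cup(Y\setminus T)$; since the uncoloured set is independent, every such edge is automatically monochromatic and this reduces to a single perfect-matching computation followed by reading off the colours. Consequently there is no real obstacle here: the main thing to verify is that every structural step driven by $P_6$-freeness is insensitive to the extra perfect-matching requirement, which it is, so the remaining work is bookkeeping.
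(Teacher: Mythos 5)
Your proposal matches the paper's proof exactly: the paper establishes Theorem~\ref{theo-P62} by rerunning the proof of Theorem~\ref{theo-P6} verbatim with Lemma~\ref{l-dom} replaced by Lemma~\ref{l-dom2}, Lemma~\ref{l-ll1} by Lemma~\ref{l-ll1b}, Lemma~\ref{l-ll2} by Lemma~\ref{l-ll2b}, and Lemma~\ref{lem-plesnikMC} by Lemma~\ref{lem-plesnikDPM}, which is precisely the substitution you describe. Your additional observations --- that the $P_6$-freeness arguments (in particular Claim~\ref{c-one}) are insensitive to the perfect-matching requirement and that the only genuinely new ingredient is the final matching step --- are correct and consistent with the paper's reasoning.
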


\noindent
Our following result can be proven in the same way as Theorem~\ref{theo-HP2} after replacing Lemma~\ref{l-ll1} by Lemma~\ref{l-ll1b}; and Lemma~\ref{lem-plesnikMC} by Lemma~\ref{lem-plesnikDPM}.

\begin{theorem}\label{theo-HP22}
Let $H$ be a graph. If {\sc (Maximum) Disconnected Perfect Matching} is polynomial-time solvable for $H$-free graphs, then it is so for $(H+P_2)$-free graphs.
\end{theorem}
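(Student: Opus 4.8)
The plan is to copy the argument of Theorem~\ref{theo-HP2} almost verbatim, replacing valid red-blue colourings by perfect-extendable red-blue colourings throughout, and replacing Lemma~\ref{l-ll1} by Lemma~\ref{l-ll1b} and Lemma~\ref{lem-plesnikMC} by Lemma~\ref{lem-plesnikDPM}. So assume {\sc (Maximum) Disconnected Perfect Matching} is polynomial-time solvable on $H$-free graphs, and let $G=(V,E)$ be a connected $(H+P_2)$-free graph on $n$ vertices. If $G$ is $H$-free we are done by assumption. Otherwise $G$ has an induced subgraph $G'$ isomorphic to $H$; set $G^*=G-(V(G')\cup N(V(G')))$. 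Because $G$ is $(H+P_2)$-free and $G'$ is isomorphic to $H$, the graph $G^*$ contains no edge, i.e.\ $V(G^*)$ is an independent set. By the second part of Observation~\ref{o} it suffices to compute a maximum perfect-extendable red-blue colouring of $G$.

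First I would branch over all $O(2^{|V(H)|}n^{|V(H)|})$ ways of colouring each vertex of $V(G')$ red or blue while colouring at most one neighbour of each such vertex with the opposite colour, and, in any branch using only one colour, over the additional $O(n^2)$ choices of a bichromatic edge. Each branch produces a starting pair $(S'',T'')$ with a non-empty core $(S^*,T^*)$. I then apply rules R1 and R2 exhaustively: by Lemma~\ref{l-ll1b} this either returns {\tt no}, in which case the branch is discarded, or yields an intermediate tuple $(S,T,X,Y)$. The crucial point, exactly as in Theorem~\ref{theo-HP2}, is that every vertex of $Z:=V\setminus(X\cup Y)$ lies in $G^*$, hence $Z$ is independent, so $(S,T,X,Y)$ is already a final tuple. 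Now Lemma~\ref{lem-plesnikDPM} applies directly and, in polynomial time, either certifies that $G$ has no perfect-extendable red-blue $(S,T,X,Y)$-colouring (discard the branch, justified again by Lemma~\ref{l-ll1b}) or returns a maximum one, which by Lemma~\ref{l-ll1b}-(ii) is a maximum perfect-extendable red-blue $(S^*,T^*,S'',T'')$-colouring; I record its value. After processing every branch I output a recorded colouring of largest value as a maximum perfect-extendable red-blue colouring of $G$. The running-time and correctness bookkeeping is identical to that of Theorem~\ref{theo-HP2}: there are polynomially many branches, each processed in polynomial time.

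I do not expect a genuine obstacle here, since the real work has been pushed into the auxiliary lemmas. The only points worth verifying are: that R1 and R2 remain sound for perfect-extendable colourings --- which is precisely the content of Lemma~\ref{l-ll1b}, valid because every perfect-extendable colouring is in particular valid, so propagation never discards a branch that could be completed; that no application of R3 is needed, because $Z$ is independent as soon as an intermediate tuple is reached (so, unlike in the siblings of Theorem~\ref{theo-HP2}, we never need the intermediate tuple to be monochromatic, and accordingly Lemma~\ref{lem-plesnikDPM} is stated for final tuples without that hypothesis); and that the argument covers both the decision and the optimisation variant at once, which it does because the branching never recurses --- the only black-box call is the $H$-free case --- and a polynomial-time algorithm for the optimisation variant also decides the decision variant (a disconnected perfect matching exists if and only if the maximum value is at least~$1$).
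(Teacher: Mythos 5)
Your proposal is correct and matches the paper's own proof, which is exactly the observation that the argument of Theorem~\ref{theo-HP2} carries over verbatim after substituting Lemma~\ref{l-ll1b} for Lemma~\ref{l-ll1} and Lemma~\ref{lem-plesnikDPM} for Lemma~\ref{lem-plesnikMC}. Your side remarks (that $Z$ is independent so R3 and monochromaticity are never needed, and that the optimisation version subsumes the decision version) are accurate and consistent with how the paper handles these points.
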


\noindent
 The following result is proven in exactly the same way as the proof of Theorem~\ref{theo-diam} after replacing 
  Lemma~\ref{l-ll1} by Lemma~\ref{l-ll1b};
 Lemma~\ref{l-ll2} by Lemma~\ref{l-ll2b}; and
 Lemma~\ref{lem-plesnikMC} by Lemma~\ref{lem-plesnikDPM}.

\begin{theorem}\label{theo-diam2}
\mdpm\ is solvable in polynomial time for graphs with diameter at most~$2$.
\end{theorem}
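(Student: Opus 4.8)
The plan is to transcribe the proof of Theorem~\ref{theo-diam} essentially verbatim, replacing the machinery for valid red-blue colourings by its perfect-extendable analogue (Lemma~\ref{l-ll1b} for Lemma~\ref{l-ll1}, Lemma~\ref{l-ll2b} for Lemma~\ref{l-ll2}, and Lemma~\ref{lem-plesnikDPM} for Lemma~\ref{lem-plesnikMC}). By Observation~\ref{o} it suffices, for the input graph $G$, to find a maximum perfect-extendable red-blue colouring or to report that none exists. If $\diam(G)\le 1$ the problem is trivial, so assume $\diam(G)=2$. Since a perfect-extendable colouring has value at least~$1$, I would first branch over the $O(n^2)$ choices of a bichromatic edge $e=uv$, say $u$ blue and $v$ red; then $D=\{u\}\cup N(u)$ is a dominating set of $G$, every vertex of $D\setminus\{v\}$ is forced blue, and we obtain a starting pair $(S'',T'')=(\{v\},D\setminus\{v\})$ with core $(S^*,T^*)=(\{v\},\{u\})$.

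Next I would apply rules R1 and R2 exhaustively; by Lemma~\ref{l-ll1b} this either reports (value-preservingly) that $G$ has no perfect-extendable $(S^*,T^*,S'',T'')$-colouring, in which case the branch is discarded, or it produces an intermediate tuple $(S',T',X',Y')$. The two structural claims from the proof of Theorem~\ref{theo-diam} carry over unchanged, because every perfect-extendable colouring is valid: every uncoloured $z$ has a neighbour in $Y'\setminus T'$ lying in $D$ (it has a neighbour in $D$ since $D$ dominates, this neighbour is neither $u$ nor $v$ by the forcing and by $z\notin X'$, hence lies in $T''\setminus\{u\}\subseteq Y'\setminus T'$, as otherwise R2 would apply), and consequently the intermediate tuple is monochromatic (a red uncoloured vertex adjacent to a blue uncoloured vertex would have two blue neighbours, a contradiction). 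Therefore I can apply R1--R3 and use Lemma~\ref{l-ll2b} to obtain, value-preservingly, a final tuple $(S,T,X,Y)$ with $Z=V\setminus(X\cup Y)$, where every $w\in Z$ has at most one neighbour in $X\setminus S$, exactly one neighbour in $Y\setminus T$ lying in $D$, and shares no neighbour with another vertex of its component of $G[Z]$.

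I would then reproduce the two diameter-$2$ claims. Using $\diam(G)=2$ and that $v$ together with its forced-red neighbours is at distance $\ge 3$ from any $w\in Z$ having no neighbour in $X\setminus S$, one upgrades the bound to: every $w\in Z$ has \emph{exactly} one neighbour in $X\setminus S$. Then, exactly as in Claim~\ref{claim-diam3}, if $G[Z]$ had three components of size at least~$2$, a vertex in the third component would be at distance $\ge 3$ from some vertex of one of the first two, a contradiction; hence $G[Z]$ has at most two non-trivial components, each monochromatic in any valid (hence any perfect-extendable) colouring. I branch over the at most four colourings of these components, re-apply R1--R3, and reach a final tuple in which the set of uncoloured vertices is independent. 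Lemma~\ref{lem-plesnikDPM} then finds in polynomial time a maximum perfect-extendable $(S,T,X,Y)$-colouring or reports that none exists; by Lemmas~\ref{l-ll1b}(ii) and~\ref{l-ll2b}(ii) this is a maximum perfect-extendable $(S^*,T^*,S'',T'')$-colouring. Taking the colouring of largest value over all $O(n^2)$ top-level branches gives a maximum perfect-extendable red-blue colouring of $G$, and every step runs in polynomial time.

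The only step that genuinely differs from the proof of Theorem~\ref{theo-diam} is the final matching call: rather than a maximum $U$-saturating matching, we need a perfect matching of $G\setminus(S\cup T)$ maximising the number of bichromatic edges with an end-vertex in $Z$, which is precisely what Lemma~\ref{lem-plesnikDPM} supplies. So I do not expect a real obstacle here; the only thing requiring a line of care is the observation that all the structural claims used are statements about \emph{valid} colourings, and hence remain applicable since perfect-extendability implies validity.
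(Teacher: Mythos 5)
Your proposal is correct and coincides with the paper's own proof: the paper proves Theorem~\ref{theo-diam2} precisely by repeating the proof of Theorem~\ref{theo-diam} with Lemma~\ref{l-ll1} replaced by Lemma~\ref{l-ll1b}, Lemma~\ref{l-ll2} by Lemma~\ref{l-ll2b}, and Lemma~\ref{lem-plesnikMC} by Lemma~\ref{lem-plesnikDPM}. Your additional observation that the structural claims transfer because every perfect-extendable colouring is valid is exactly the justification the paper relies on.
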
 
 
\noindent
We now show the following result by modifying the proof of Theorem~\ref{theo-clawfree}; note that the maximum degree bound is no longer $3$ but $6$.

\begin{theorem}\label{theo-clawfree2}
\mdpm\ is \NP-hard for graphs of maximum degree~$6$ that are line graphs of triangle-free graphs.
\end{theorem}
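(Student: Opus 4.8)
The plan is to adapt the reduction of Theorem~\ref{theo-clawfree}, but instead of reducing from {\sc Maximum Cut}, to reduce from {\sc Maximum Matching Cut} on subcubic line graphs of triangle-free graphs (which is \NP-hard by Theorem~\ref{theo-clawfree}) while ensuring that the resulting instance is equipped with a perfect matching. The key difficulty is that a subcubic instance need not have a perfect matching, and even if it does, a matching cut of it need not be contained in a perfect matching; we must engineer gadgets that both supply a perfect matching and force the matching-cut behaviour to be preserved.

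First I would take a connected subcubic line graph $G$ of a triangle-free graph, together with the integer $k$ from the {\sc Maximum Matching Cut} instance. For every vertex $v\in V(G)$, I attach a small fixed gadget (for instance a pendant triangle or a pendant $P_2$, chosen so that the construction remains a line graph of a triangle-free graph) whose role is to ``absorb'' $v$ into a perfect matching: the gadget contributes an edge incident to $v$ that can be used whenever $v$ is not matched across the cut. Since each original vertex has degree at most $3$ and we add at most $3$ more edges per vertex through these gadgets (one per incident original edge in the worst bookkeeping), the maximum degree of the resulting graph $G'$ is at most $6$, matching the statement. I would then verify that $G'$ is still a line graph of a triangle-free graph: replacing each vertex of the underlying triangle-free graph by a suitable small triangle-free local structure and checking that no triangle is created is the routine but necessary part.

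Next I would establish the equivalence: $G$ has a matching cut of size at least $k$ that — crucially — we may assume extends to a disconnected perfect matching of $G'$, if and only if $G'$ has a disconnected perfect matching whose matching cut has at least $k$ edges. The forward direction takes a maximum matching cut $M$ of $G$, colours $V(G)$ red/blue accordingly, and then uses the absorbing gadgets to complete $M$ to a perfect matching of $G'$: every vertex of $G$ not saturated by $M$ is matched via its gadget, and the gadget vertices are matched internally; one checks the gadget edges are monochromatic so the cut stays exactly $M$. The backward direction uses Observation~\ref{o} and Observation~\ref{lem-cliques-monochrom} (so that each pendant triangle, or the core of each gadget, is monochromatic) to project a perfect-extendable red-blue colouring of $G'$ down to a valid red-blue colouring of $G$ of the same value, using that the gadget edges never contribute to the cut.

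The main obstacle I anticipate is the gadget design: the pendant structures must simultaneously (i) keep $G'$ a line graph of a triangle-free graph, (ii) be forced monochromatic so they do not inflate or deflate the cut, (iii) always admit an internal perfect matching regardless of whether their attachment vertex is matched inside or outside, and (iv) keep the degree bound at $6$. Balancing (i) against (iii)–(iv) is delicate, since line graphs of triangle-free graphs are exactly $(K_{1,3},\text{diamond})$-free graphs, so the gadgets have very restricted local structure; I expect the correct choice is a single pendant triangle per vertex together with a parity/padding argument, but pinning down the exact construction and re-checking the reduction bound is where the real work lies.
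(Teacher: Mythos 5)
Your proposal correctly isolates the central difficulty (supplying a perfect matching without leaving the class of line graphs of triangle-free graphs), but it leaves the key construction unresolved, and the candidate gadgets you float cannot be made to work. Line graphs of triangle-free graphs are exactly the $(K_{1,3},\mbox{diamond})$-free graphs, and in the subcubic instance from Theorem~\ref{theo-clawfree} a typical vertex $v$ already has two neighbours inside its triangle $C_v$ and one neighbour $c$ outside it. Any pendant structure attached to $v$ adds a neighbour $d$ of $v$ adjacent neither to $c$ nor to a triangle-neighbour $a$ of $v$, so $\{a,c,d\}$ is independent in $N(v)$ and $v$ becomes the centre of an induced claw; the class forbids exactly the kind of local attachment you propose. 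Moreover, the specific absorbers you mention already fail your own condition (iii): a pendant triangle has an odd number of new vertices, so it cannot be perfectly matched internally when $v$ is matched elsewhere, while a pendant edge or pendant path leaves a far endpoint unmatched in one of the two cases. So the gadget design you defer as ``the real work'' is not a routine verification -- it is the whole proof, and no pendant gadget can supply it.

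The paper's solution avoids pendants entirely and modifies the reduction from {\sc Maximum Cut} at its root: each vertex $u$ of the subcubic {\sc Maximum Cut} instance is replaced by a clique $C_u$ of size $6$ (instead of a triangle), and each edge $uv$ by \emph{two} edges between $C_u$ and $C_v$, still with at most one external neighbour per clique vertex, so the graph remains $(K_{1,3},\mbox{diamond})$-free and has maximum degree $6$. The doubling is the parity device you were looking for: in any matching cut the number of vertices of $C_u$ covered by cut edges is twice the number of cut edges of $G$ at $u$, hence even, so the remaining vertices of the even clique $C_u$ can always be matched inside $C_u$, and every matching cut of the constructed graph extends to a perfect matching. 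If you want to salvage your plan, replace the pendant absorbers by this clique enlargement; reducing from {\sc Maximum Cut} directly (rather than from the subcubic {\sc Maximum Matching Cut} instance) then makes the equivalence identical to that of Theorem~\ref{theo-clawfree}.
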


\begin{proof}
We make the following changes in the hardness construction of Theorem~\ref{theo-clawfree}. First, we replace every vertex~$u$ of the input graph $G$ by a clique~$C_u$ of size~$6$ instead of a triangle. Then, for each edge $uv \in E(G)$, we add two edges between $C_u$ and $C_v$, such that (again) every vertex in $C_v$ has at most one neighbour outside $C_v$.
The resulting graph $G'$ is still $(K_{1,3}, \mbox{diamond})$-free but has maximum degree is~$6$.
We can now show that $G$ has an edge cut of size at least~$k$ if and only if $G'$ has a matching cut of size at least $k$, using the same arguments as in the proof of Theorem~\ref{theo-clawfree}. The proof follows from the observation that we can extend a matching cut~$M$ of $G'$ to a perfect matching by the fact that the number of vertices in a clique $C_u$ that are not incident with any edge of $M'$ is even.
\end{proof}

\noindent
We now show the following result by modifying the proof of Theorem~\ref{theo-2P3}. 

\begin{theorem}\label{theo-2P32}
\mdpm\ is \NP-hard for $2P_3$-free 
quadrangulated 
graphs of radius at most~$2$ and diameter at most~$3$.
\end{theorem}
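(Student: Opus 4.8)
The plan is to adapt the reduction behind Theorem~\ref{theo-2P3}, switching the source problem from {\sc Exact $3$-Cover} to {\sc Exact Cover by $4$-Sets}; the latter is easily seen to be \NP-complete (from an instance $(X,\mathcal S)$ of {\sc Exact $3$-Cover} with $|X|=3q$, add new elements $z_1,\dots,z_q$ and replace each $S\in\mathcal S$ by the $q$ sets $S\cup\{z_j\}$, $1\le j\le q$). The reason for this change is the single extra demand of {\sc Maximum Disconnected Perfect Matching}: the constructed matching cut must lie inside a perfect matching of the \emph{whole} graph. In the reduction of Theorem~\ref{theo-2P3} the triangles $K_S$ that are coloured monochromatically and carry no cut edge have odd order, so they admit no perfect matching; replacing each $3$-set by a $4$-set turns each such gadget into a $K_4$, which has even order and, crucially, a perfect matching of its own, so this parity obstruction vanishes while all the structural features are kept.

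So, given an instance $(X,\mathcal S)$ of {\sc Exact Cover by $4$-Sets} with $X=\{x_1,\dots,x_{4q}\}$, I would build $G$ exactly as in the proof of Theorem~\ref{theo-2P3}, except that $K_X$ is now a clique on the $4q$ vertices $x_1,\dots,x_{4q}$, and for each $S=\{x_a,x_b,x_c,x_d\}\in\mathcal S$ we take a clique $K_S$ on four vertices $x_a^S,x_b^S,x_c^S,x_d^S$ and add the edges $x_\ell^S x_\ell$ for $\ell\in\{a,b,c,d\}$. The membership proofs are then essentially those of Theorem~\ref{theo-2P3}: $G-K_X$ is a disjoint union of $K_4$'s, so every induced $P_3$ uses a vertex of the clique $K_X$ and hence $G$ is $2P_3$-free; a chordless cycle on at least five vertices would either use at least two (pairwise adjacent) vertices of $K_X$ and hence have a chord, or use at most one vertex of $K_X$ and thus lie inside $K_X\cup K_S$ for a single $S$, where one checks directly that no such cycle exists, so $G$ is quadrangulated; and every $x_\ell$ has eccentricity at most $2$ while every $x_\ell^S$ has eccentricity at most $3$, giving $\radius(G)\le 2$ and $\diam(G)\le 3$. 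For correctness I would prove that $\mathcal S$ has an exact cover if and only if $G$ has a disconnected perfect matching whose matching cut has at least $4q$ edges. From an exact cover $\mathcal C$, colour $K_X$ red and each $K_S$ blue if $S\in\mathcal C$, red otherwise; the $4q$ edges between $K_X$ and the blue gadgets are bichromatic and saturate $K_X$ together with every vertex of a blue gadget, each red $K_4$ supplies its own two-edge perfect matching, and the union is a perfect matching containing this matching cut. Conversely, if $G$ has a disconnected perfect matching with matching cut of size at least $4q$, then $K_X$ is monochromatic (Observation~\ref{lem-cliques-monochrom}), say red, so every bichromatic edge joins $K_X$ to some gadget (the gadgets are pairwise non-adjacent and internally monochromatic); hence there are at most $4q$, and thus exactly $4q$, bichromatic edges, so each vertex of $K_X$ has exactly one blue neighbour, and a blue $K_S$ contributes its four ports as the unique blue neighbours of the four distinct vertices $x_a,x_b,x_c,x_d$; therefore $\{S:K_S\text{ blue}\}$ partitions $X$ and is an exact cover.

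The graph-class verifications and the forward construction of the perfect matching are routine. The step needing the most care is the book-keeping in the reverse direction: one must confirm that all bichromatic edges are $K_X$--gadget edges, deduce that there are exactly $4q$ of them, and translate the fact that every $x_\ell$ has a unique blue neighbour into the statement that the blue gadgets are vertex-disjoint and together cover $X$. This is exactly where moving to $4$-element sets pays off, since it makes each $K_4$-gadget simultaneously perfectly matchable when coloured monochromatically and fully absorbed into the cut when it takes the ``opposite'' colour.
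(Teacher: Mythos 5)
Your proposal is correct and follows essentially the same route as the paper: the paper's proof also reduces from {\sc Exact $4$-Cover} and modifies the construction of Theorem~\ref{theo-2P3} by turning the triangles $K_S$ into cliques of size~$4$, precisely so that monochromatic gadgets carrying no cut edge acquire a perfect matching of their own. Your write-up merely makes explicit a few details the paper leaves implicit (the \NP-completeness of the source problem and the book-keeping in the reverse direction), all of which check out.
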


\begin{proof}
We reduce from {\sc Exact $4$-Cover} instead of {\sc Exact $3$-Cover}. This allows us to modify the construction of the proof of Theorem~\ref{theo-2P3}, such that the triangles $K_S$ become cliques of size~$4$. This does not change the size of the matching cut. Moreover, all vertices in the clique $K_X$ still need to be matched to the cliques $K_S$ to obtain a matching cut of maximum size. However, all previously unmatched vertices, which exist only inside the cliques $K_S$, may now be matched to a vertex inside $K_S$.
\end{proof}

\noindent
We are now ready to prove Theorems~\ref{t-dichodiam3}--\ref{t-dichoH3}, which we restate below.

\medskip
\noindent
{\bf Theorem~\ref{t-dichodiam3} (restated).}
{\it For an integer~$d$,  {\sc Maximum Disconnected Perfect Matching} on graphs of diameter $d$ is 
\begin{itemize}
\item polynomial-time solvable if $d \leq 2$, and
\item \NP-hard if $d \geq 3$.
\end{itemize}}

\begin{proof}
The two results follow from Theorems~\ref{theo-diam2} and~\ref{theo-2P32}, respectively.
\end{proof}

\noindent
{\bf Theorem~\ref{t-dichorad3} (restated).}
{\it For an integer~$r$,  {\sc Maximum Disconnected Perfect Matching} on graphs of radius $r$~is 
\begin{itemize}
\item polynomial-time solvable if $r \leq 1$, and
\item \NP-hard if $r \geq 2$.
\end{itemize}}

\begin{proof}
Recall that a graph $G$ of radius~$1$ has a dominating vertex~$u$, and hence the only matching cuts are of the form $\{uv\}$, where $v$ is a vertex of degree~$1$ in $G$. Notice that such an edge $uv$ must belong to any perfect matching of $G$. Hence, we just need to check if $G$ has a perfect matching and if $G$ contains a vertex of degree~$1$.
Both can be checked in polynomial time and thus proves the first result. The second result follows from Theorem~\ref{theo-2P32}.
\end{proof}

\noindent
{\bf Theorem~\ref{t-dichoH3} (restated).}
{\it For a graph~$H$, {\sc Maximum Disconnected Perfect Matching} on $H$-free graphs is 
\begin{itemize}
\item polynomial-time solvable if $H\ssi sP_2+P_6$ for some $s\geq 0$, and
\item \NP-hard if $H\si K_{1,3}$, $2P_3$ or $H\si C_r$ for some $r\geq 3$.
\end{itemize}}

\begin{proof}
Let $H$ be a graph. First suppose that $H$ has a cycle. Recall that {\sc Disconnected Perfect Matching} is \NP-complete for $C_s$-free graphs~\cite{FLPR23} and thus for $H$-free graphs. Hence, the same holds for \mdpm. If $H$ is not a cycle, then $H$ is a forest. If $H$ contains a vertex of degree at least~$3$, then the class of $H$-free graphs contains the class of $K_{1,3}$-free graphs, which contains the class of line graphs, so we apply Theorem~\ref{theo-clawfree2}. 
Otherwise $H$ is a linear forest. If $H\ssi sP_2+P_6$ for some $s\geq 0$, then we apply 
Theorem~\ref{theo-P62} in combination with $s$ applications of Theorem~\ref{theo-HP22}. Else $H$ contains an induced $2P_3$ and we apply Theorem~\ref{theo-2P32}. 
\end{proof}

\section{Conclusions}\label{s-con}

We considered the optimization version {\sc Maximum Matching Cut} of the classical {\sc Matching Cut} problem after first observing that the {\sc Perfect Matching Cut} problem is a special case of the former problem. We generalized known algorithms for graphs of diameter at most~$2$ and $P_6$-free graphs from {\sc Matching Cut} and {\sc Perfect Matching Cut} to {\sc Maximum Matching Cut}. We also showed that the latter problem is computationally harder (assuming $\PP\neq \NP)$ than {\sc Matching Cut} and {\sc Perfect Matching Cut} for various graph classes. Our results led to three new dichotomy results, namely a computational complexity classification of {\sc Maximum Matching Cut} for $H$-free graphs, and complexity classifications for graphs of bounded diameter and bounded radius. Classification for $H$-free graphs are still unsettled for {\sc Matching Cut} and {\sc Perfect Matching Cut}, as can be observed from Theorems~\ref{t-main1} and~\ref{t-main2}. We also pose the following open problem, which is the missing case from Theorem~\ref{t-diameter2}.

\begin{open}\label{o-1}
Determine the complexity of {\sc Perfect Matching Cut} for graphs of diameter~$3$.
\end{open}

\noindent
To prove the dichotomies for {\sc Maximum Matching Cut}, we showed that {\sc Maximum Matching Cut} is \NP-hard for $2P_3$-free quadrangulated graphs of diameter~$3$ and radius~$2$, whereas {\sc Matching Cut} is known to be polynomial-time solvable for quadrangulated graphs~\cite{Mo89}. We recall an open problem of Le and Telle~\cite{LT22} who asked, after proving polynomial-time solvability for chordal graphs, the following question (a graph is {\it $k$-chordal} for some $k\geq 3$ if it is $(C_{k+1},C_{k+2},\ldots)$-free, so $3$-chordal graphs are the chordal graphs).

\begin{open}[\cite{LT22}]
Determine the complexity of {\sc Perfect Matching Cut} for quadrangulated graphs, or more general, $k$-chordal graphs for $k\geq 4$.
\end{open}

\noindent
We also showed how our proofs could be adapted to hold for {\sc Maximum Disconnected Perfect Matching}, the optimization version of {\sc Disconnected Perfect Matching}. This led to exactly the same dichotomies for the former problem as for {\sc Maximum Matching Cut} for bounded diameter, bounded radius and $H$-free graphs. Moreover, it implied new results for {\sc Disconnected Perfect Matching} as well, including a polynomial-time algorithm for $P_6$-free graphs.
The complexity classification of {\sc Disconnected Perfect Matching} for $H$-free graphs is still not complete (see Theorem~\ref{t-main3}). We also pose the following open problem, which is the missing case from Theorem~\ref{t-diameter3}.

\begin{open}\label{o-3}
Determine the complexity of {\sc Disconnected Perfect Matching} for graphs of radius~$2$.
\end{open}

\noindent
Our final open problem is related to $H$-free graphs.

\begin{open}
For every graph $H$, is {\sc Disconnected Perfect Matching} polynomial-time solvable for $(H+P_3)$-free graphs if it is polynomial-time solvable for $H$-free graphs?
\end{open}

\noindent
We now know from Theorem~\ref{t-main3} that the above result holds for $P_2$, while for {\sc Matching Cut} we have this result for $P_3$~\cite{LPR22} and for {\sc Perfect Matching Cut} even for $P_4$~\cite{LPR23a}.

\medskip
\noindent
{\it Acknowledgments.} We thank Van Bang Le for pointing out that Theorem~\ref{t-dichoH3} implies a resolution of the open problem of~\cite{BP} for $P_6$-free graphs.

\end{document}